\documentclass[11pt]{article}
\usepackage{amsmath, amssymb, amsthm}
\usepackage{verbatim}
\usepackage{multicol}
\usepackage{enumerate}
\usepackage{comment}
\usepackage{dsfont}
\usepackage[none]{hyphenat}
\usepackage[unicode]{hyperref}
\hypersetup{
	colorlinks=true,
	linkcolor=blue,
	filecolor=magenta,      
	urlcolor=cyan,
	citecolor=blue
}
\usepackage{pgf}
\usepackage{tikz}
\usetikzlibrary{positioning,arrows,shapes,decorations.markings,decorations.pathreplacing,matrix,patterns}
\tikzstyle{vertex}=[circle,draw=black,fill=black,inner sep=0,minimum size=3pt,text=white,font=\footnotesize]
\usepackage{cleveref}

\date{}
\title{\vspace{-1.2cm} Positive discrepancy, MaxCut, and eigenvalues of graphs}

\author{Eero R\"aty\thanks{Ume\r{a} University, \emph{e-mail}: \textbf{\{eero.raty,istvan.tomon\}@umu.se}. ER is supported by a postdoctoral grant 213-0204 from the Olle Engkvist Foundation.}, Benny Sudakov\thanks{ETH Zurich, \emph{e-mail}: \textbf{benjamin.sudakov@math.ethz.ch}.
Research supported in part by SNSF grant 200021\_196965.}, 
	Istv\'an Tomon\footnotemark[1]
}

\oddsidemargin  0pt
\evensidemargin 0pt
\marginparwidth 40pt
\marginparsep 10pt
\topmargin 10pt
\headsep 10pt
\textheight 8.8in
\textwidth 6.6in

\theoremstyle{plain}
\newtheorem{theorem}{Theorem}[section]

\newtheorem{corollary}[theorem]{Corollary}
\newtheorem{claim}[theorem]{Claim}
\newtheorem{lemma}[theorem]{Lemma}
\newtheorem{conjecture}[theorem]{Conjecture}

\newtheorem{prop}[theorem]{Proposition}
\Crefname{theorem}{Theorem}{Theorems}
\Crefname{definition}{Definition}{Definitions}
\Crefname{corollary}{Corollary}{Corollaries}
\Crefname{claim}{Claim}{Claims}
\Crefname{lemma}{Lemma}{Lemmas}
\Crefname{conjecture}{Conjecture}{Conjectures}
\Crefname{problem}{Problem}{Problems}
\Crefname{prop}{Proposition}{Propositions}

\theoremstyle{definition}

\DeclareMathOperator{\disc}{disc}
\DeclareMathOperator{\surplus}{sp}
\DeclareMathOperator{\dfc}{dfc}
\DeclareMathOperator{\pdisc}{pdisc}
\DeclareMathOperator{\Sym}{Sym}

\begin{document}
	
	\maketitle
	\sloppy
	
	\begin{abstract}
		The \emph{positive discrepancy} of a graph $G$ of edge density $p=e(G)/\binom{v(G)}{2}$ is defined as 
		$$\disc^{+}(G)=\max_{U\subset V(G)}e(G[U])-p\binom{|U|}{2}.$$
		In 1993, Alon proved (using the equivalent terminology of \emph{minimum bisections}) that if $G$ is $d$-regular on $n$ vertices, and $d=O(n^{1/9})$, then $\disc^{+}(G)=\Omega(d^{1/2}n)$. We greatly extend this by showing that if $G$ has \emph{average degree} $d$, then
		$$\disc^{+}(G)=\begin{cases} \Omega(d^{\frac{1}{2}}n) &\mbox{if }d\in [0,n^{\frac{2}{3}}],\\
			\Omega(n^2/d) & \mbox{if } d\in  [n^{\frac{2}{3}},n^{\frac{4}{5}}],\\
			\Omega(d^{\frac{1}{4}}n/\log n) & \mbox{if } d\in \left[n^{\frac{4}{5}},(\frac{1}{2}-\varepsilon)n\right].\end{cases}$$
		These bounds are best possible if $d\ll n^{3/4}$, and the complete bipartite graph shows that $\disc^{+}(G)=\Omega(n)$ cannot be improved if $d\approx n/2$. Our proofs are based on semidefinite programming and linear algebraic techniques.

        An interesting corollary of our results is that every $d$-regular graph on $n$ vertices with ${\frac{1}{2}+\varepsilon\leq \frac{d}{n}\leq 1-\varepsilon}$ has a \emph{cut} of size $\frac{nd}{4}+\Omega(n^{5/4}/\log n)$. This is not necessarily true without the assumption of regularity, or the bounds on $d$. 
  
		The positive discrepancy of regular graphs is controlled by the second eigenvalue $\lambda_2$, as $\disc^{+}(G)\leq \frac{\lambda_2}{2} n+d$. As a byproduct of our arguments, we present lower bounds on $\lambda_2$ for regular graphs, extending the celebrated Alon-Boppana theorem in the dense regime. 
	\end{abstract}

\noindent
 \textbf{Keywords.} discrepancy, MaxCut, eigenvalues

 \noindent
 \textbf{2020 Mathematics Subject Classification.} 05C50, 05C35, 05E30
	
	\section{Introduction}
	Discrepancy theory studies the extent of deviation between expected patterns and actual arrangements of objects in various mathematical structures, with applications spanning measure theory, combinatorics, number theory, and computer science. For a detailed overview of the subject, we refer the interested reader to the book of Chazelle \cite{disc_book}.
	
	Given a graph $G$ with $n$ vertices, $m$ edges, and density $p=m/\binom{n}{2}$, the discrepancy of $G$ is defined as
	$$\disc(G)=\max_{U\subset V(G)}\left|e(G[U])-p\binom{|U|}{2}\right|.$$
	In other words, $\disc(G)$ measures how much the induced subgraphs of $G$ deviate from their expected size.  In 1971, Erd\H{o}s and Spencer \cite{ES71} proved that if $p=1/2$, then the discrepancy is always at least $\Omega(n^{3/2})$. This can easily seen to be tight by considering a random graph of edge density $1/2$. Later, Erd\H{o}s, Goldberg, Pach, and Spencer \cite{EGPS} extended this by showing that $\disc(G)\geq \Omega(\sqrt{mn})$ as long as $p\leq 1/2$. As the discrepancy of $G$ is the same as the discrepancy of its complement, this implies a lower bound for all values of $p$, and it is also tight for $p\in [1/n,1-1/n]$   by considering the Erd\H{o}s-R\'enyi random graph $G(n,m)$. 
	
	The previous results tell us that when $p\leq 1/2$, we can find some subset $U\subset V(G)$ such that $e(G[U])$ is either $\Omega(\sqrt{mn})$ larger than its expected size $p\binom{|U|}{2}$, or it is  $\Omega(\sqrt{mn})$ smaller. This raises the following natural question: to what extent can we expect both outcomes to hold? That is, how large of a surplus and how large of a deficit are we guaranteed to find in an induced subgraph of $G$ compared to its expected size? This motivates the following definitions. The \emph{positive discrepancy} of $G$ is defined as
	$$\disc^{+}(G)=\max_{U\subset V(G)}e(G[U])-p\binom{|U|}{2},$$
	and the \emph{negative discrepancy} of $G$ is defined as 
	$$\disc^{-}(G)=\max_{U\subset V(G)}p\binom{|U|}{2}-e(G[U]).$$
	Thus, we have $\disc(G)=\max\{\disc^{+}(G),\disc^{-}(G)\}$. These two quantities are closely related to some of the most extensively studied parameters of graphs.
	
	The MaxCut problem studies the maximum size of a cut, i.e.\ the maximum number of edges between two parts in a partition of the vertex set. MaxCut is a central problem both in discrete mathematics and theoretical computer science \cite{AlonMaxCut,AKS05,BJS,Edwards1,Edwards2,GJS,GW95}. A simple probabilistic argument shows that the maximum cut is always at least $m/2$, so it is more natural to study the \emph{surplus}, defined as follows. The \emph{surplus} of $G$, denoted by $\surplus(G)$, is the maximum $s$ such that $V(G)$ has a partition into two parts with $m/2+s$ edges between the parts. As we show in Lemma \ref{lemma:surplus}, in case $G$ is regular, we have $\surplus(G)=\Theta(\disc^{-}(G))$. One can define the \emph{minimum bisection} and \emph{deficit} of a graph in a similar manner (but by restricting partition to equipartition): let $\dfc(G)$ denote the maximum $s$  such that $G$ has a partition into sets of size $\lfloor n/2\rfloor$ and $\lceil n/2\rceil$ with $m/2-s$ edges between the parts; then $m/2-s$ is the size of the minimum bisection. Again, in case $G$ is regular, we show  $\dfc(G)=\Theta(\disc^{+}(G))$.
	
	Now let us focus on the positive discrepancy. In 1993, Alon \cite{Alon93} proved that in case $G$ is $d$-regular and $d=O(n^{1/9})$, we have $\dfc(G)=\Omega(d^{1/2}n)=\Omega(\sqrt{mn})$, and equivalently $\disc^{+}(G)=\Omega(d^{1/2}n)$; later, similar results were obtained for multigraphs by Alon, Hamburger, and Kostochka \cite{AHK99}. These results are tight as a random $d$-regular graph for $1\leq d\leq n/2$ has both positive and negative discrepancy equal to $\Theta(d^{1/2}n)$ with high probability. Alon's argument is based on combinatorial and probabilistic ideas, which require the graph to be sufficiently sparse. On the other hand, the complete bipartite graph with parts of size $n/2$ has positive discrepancy $O(n)$, so the lower bound  $\Omega(d^{1/2}n)$ does not need to hold in general.
 
 Bollob\'as and Scott \cite{BS06} establishes the lower bound $\disc^{+}(G)=\Omega(n)$ for every $G$ of average degree $d\in [1,n-1]$. However, this bound is only known to be tight when $d\approx n(1-1/r)$ for some integer constant $r$, the construction being the Tur\'an graph $T_r(n)$. We recall that $T_{r}(n)$ is the union of $r$ independent sets, each of size $\lfloor n/r\rfloor$ or $\lceil n/r\rceil$, with all edges added between the sets. The lower bound $\disc^{+}(G)=\Omega(n)$ actually follows from a more general theorem, which establishes the following curious interplay between the positive and negative discrepancy: if  $d\in [1,n-1]$, then
	$$\disc^{+}(G)\cdot \disc^{-}(G)=\Omega(d(n-d)n).$$

 \subsection{Our results}
	
	What happens when $d$ is between $n^{1/9}$ and $n/2$? The following conjecture is proposed by Verstraete (see Conjecture A in \cite{V17}).
 
 \begin{conjecture}
     Let $G$ be an $n$ vertex graph of average degree $d$, where $1\leq d\leq (1/2-\varepsilon)n$. Then $\disc^+(G)=\Omega(d^{1/2}n)$.
 \end{conjecture}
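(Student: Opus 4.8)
I would attack the conjecture spectrally, and I will argue that the approach naturally establishes it in a sparse range while revealing why it should \emph{fail} for large $d$. The extremal examples quoted above are regular (random $d$-regular graphs realize the bound $\Theta(d^{1/2}n)$), so the regular case is the bottleneck and I focus on $d$-regular $G$. Set $M=A-pJ$ with $p=d/(n-1)$. A direct computation gives $\mathbf 1_U^\top M\,\mathbf 1_U = 2e(G[U])-p|U|^2$, whence
$$\disc^{+}(G)=\tfrac12\max_{x\in\{0,1\}^n}x^\top M x+O(d).$$
Since $\mathbf 1$ is a common eigenvector, $M\mathbf 1=-p\,\mathbf 1\approx 0$, and on $\mathbf 1^{\perp}$ the matrix $M$ coincides with $A$, whose eigenvalues there are $\lambda_2\ge\cdots\ge\lambda_n$. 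Thus the top \emph{nontrivial} eigenvalue is $\lambda_2$, and the problem splits into two tasks: rounding the $\lambda_2$-eigenvector into a genuine $0/1$ vector, and lower-bounding $\lambda_2$.

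For the rounding, let $v\perp\mathbf 1$ be a unit $\lambda_2$-eigenvector and sweep over its level sets $U_t=\{i:v_i\ge t\}$, averaging $\mathbf 1_{U_t}^\top M\,\mathbf 1_{U_t}$ over a random threshold $t$. A co-area computation shows the expected value is of order $\lambda_2$ times a delocalization factor, which is $\Theta(n)$ precisely when $v$ is flat (say $\|v\|_\infty=O(n^{-1/2})$); this recovers $\disc^{+}(G)=\Omega(\lambda_2 n)$, matching the upper bound $\disc^{+}(G)\le\frac{\lambda_2}{2}n+d$ from the abstract. For the eigenvalue bound I would use the Alon--Boppana mechanism: pick vertices $u_1,u_2$ with disjoint, non-adjacent, tree-like radius-$1$ neighborhoods and form localized test vectors $f_j$ placing weight $\sqrt d$ on $u_j$ and weight $1$ on each neighbor; each has Rayleigh quotient $f_j^\top A f_j/\|f_j\|^2=\Omega(\sqrt d)$, and because their supports do not interact the two-dimensional span they generate certifies $\lambda_2=\Omega(\sqrt d)$. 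Two sufficiently separated neighborhoods exist whenever $G$ is sparse enough (the sparser $G$, the more separation available), so a plain two-ball certificate already proves the conjecture for a polynomial range of $d$; a semidefinite relaxation that superimposes many overlapping neighborhoods at once should extend $\lambda_2=\Omega(\sqrt d)$ up to about $d\le n^{2/3}$.

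The hard part — and where I believe the conjecture genuinely breaks — is forcing $\lambda_2=\Omega(\sqrt d)$ once $d$ grows past roughly $\sqrt n$, and certainly past $n^{2/3}$. The obstruction is real: even moments are useless, since $\operatorname{tr}(A^{2k})=\sum_i\lambda_i^{2k}$ cannot separate large positive eigenvalues from large negative ones, and in a dense graph the entire second-moment budget $\sum_{i\ge2}\lambda_i^2=d(n-d)$ can be absorbed by a few strongly negative eigenvalues while $\lambda_2$ stays tiny — exactly as in the complete multipartite graphs, where $\lambda_2=0$. Moreover, when $G$ has diameter $2$ (which a $d$-regular graph with $d\gg\sqrt n$ may well have) no two vertices have disjoint neighborhoods, so the localized test vectors overlap and the Alon--Boppana certificate collapses. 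I therefore expect the true order of $\disc^{+}(G)$ to drop below $d^{1/2}n$ in this range: via $\disc^{+}(G)\le\frac{\lambda_2}{2}n+d$, a construction with $\lambda_2=\Theta(n/d)$ would yield $\disc^{+}=\Theta(n^2/d)=o(d^{1/2}n)$ for $d\gg n^{2/3}$. In short, my plan proves the conjecture for $d\ll n^{2/3}$, and the same analysis strongly suggests it is false for $n^{2/3}\ll d\le(\tfrac12-\varepsilon)n$, where the correct bound should be the regime-dependent quantity $\Omega(n^2/d)$ rather than $\Omega(d^{1/2}n)$.
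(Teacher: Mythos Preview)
Your high-level diagnosis is correct and matches the paper exactly: the conjecture is \emph{false}, the threshold is $d\asymp n^{2/3}$, the true lower bound in the range $n^{2/3}\le d\le n^{3/4}$ is $\Omega(n^2/d)$, and counterexamples arise from regular graphs with $\lambda_2=\Theta(n/d)$. The paper realizes the last point via an explicit strongly-regular construction of Metz on $\tfrac12 q^2(q^2-1)$ vertices with $d=(q-1)(q^2+1)\approx n^{3/4}$ and $\lambda_2=q-1\approx n^{1/4}$, and then uses $\disc^+(G)\le \tfrac{\lambda_2}{2}n+d$ together with interlacing to obtain counterexamples throughout $[n^{2/3},n^{3/4}]$. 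You predicted the mechanism but did not supply the graph; the construction is the whole content of the disproof.

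Your argument for the range $d\le n^{2/3}$, however, has a genuine gap. The step ``round the $\lambda_2$-eigenvector to get $\disc^+(G)=\Omega(\lambda_2 n)$'' does not hold without the delocalization hypothesis $\|v\|_\infty=O(n^{-1/2})$, and you give no mechanism to enforce it. The paper explicitly notes (after its Lemma~2.5) that $\disc^+(G)$ can be far below $\lambda_2 n$: take the disjoint union of $K_{d+1}$ and a random $d$-regular graph on $n\ge d^2$ vertices, where $\lambda_2=d$ but $\disc^+(G)=\Theta(d^{1/2}n)$. So the implication $\lambda_2=\Omega(\sqrt d)\Rightarrow \disc^+(G)=\Omega(d^{1/2}n)$ fails in general, and your level-set/co-area step breaks exactly because $v$ can be localized on a small piece of the graph. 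Your two-ball Alon--Boppana certificate for $\lambda_2$ also needs disjoint tree-like neighborhoods, which you do not secure.

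The paper sidesteps all of this by \emph{not} going through $\lambda_2$ at all in the sparse regime. It assigns to each vertex $v$ the vector $x_v\in\mathbb{R}^{V(G)}$ with $x_v(v)=1$, $x_v(u)=z\approx \delta/\sqrt d$ for $u\sim v$, and $0$ otherwise; then it applies Goemans--Williamson hyperplane rounding directly to these vectors and computes $\mathbb{E}[\disc(U)]$ in closed form. This is morally your ``superimpose many overlapping neighborhoods'' idea, but it bounds $\disc^+(G)$ directly rather than $\lambda_2$, so no eigenvector delocalization is ever needed. The same computation with $z=\delta n/d^2$ gives the $\Omega(n^2/d)$ bound for $d\ge n^{2/3}$, confirming your prediction of the correct order there.
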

  
 As we will show, this conjecture is false in general and the behavior of the minimum positive  discrepancy turns out to be more complex.  To this end, let $f(d)=f_n(d)$ denote the minimum of $\disc^{+}(G)$ over all graphs on $n$ vertices of average degree $d$ (naturally, we consider only those values of $d$ such that $dn$ is an even integer).  We show that $f(d)=\Theta(d^{1/2}n)$ continues to hold as long as $d\ll n^{2/3}$, after which $f(d)$ starts decreasing until $d\approx n^{3/4}$. For $n^{2/3}\leq d\leq n-1$, we prove the lower bound $\Omega(n^2/d)$, which is the best possible if $d\in [n^{2/3}, n^{3/4}]$, see  Section \ref{sect:upper_bound} for a discussion about upper bounds. Starting from $d\gg n^{3/4}$, we do not have matching lower and upper bounds anymore. However, we improve $\Omega(n^2/d)$  when $n^{4/5}\ll d\leq (1/2-\varepsilon)n$ by establishing the stronger lower bound $f(d)=\Omega(d^{1/4}n/\log n)$.  Finally, the complete balanced bipartite graph shows that $f(n/2)=O(n)$. Thus, strangely, it seems like the function $f(d)$ changes monotonicity at least 3 times between $1$ and $n/2$, see Figure \ref{figure1} for an illustration.  The lower bounds are summarized in the following theorem.
	
	\begin{theorem}\label{thm:main}
		There exists $c>0$, and for every $\varepsilon\in (0,1/2)$ there exists $c_1>0$ such that  the following holds for every sufficiently large $n$. Let $G$ be graph on $n$ vertices of average degree $d$. Then
		$$\disc^{+}(G)> \begin{cases} cd^{1/2}n &\mbox{if }d\leq n^{2/3},\\
			c n^2/d & \mbox{if }n^{2/3}\leq d\leq n^{4/5},\\
			c_1d^{1/4}n/\log n & \mbox{if }n^{4/5}\leq d\leq (1/2-\varepsilon)n.\end{cases}$$
	\end{theorem}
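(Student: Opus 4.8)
The plan is to prove the three regimes by very different techniques, since the bound changes character across them.

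The plan is to reduce everything to a single spectral optimization and then round. Write $p=2m/\binom{n}{2}$ and let $M=A-pJ$ be the centered adjacency matrix, where $A$ is the adjacency matrix and $J$ the all-ones matrix. Then for $x=\mathbf{1}_U$ one has the exact identity $\disc^{+}(G)=\max_{x\in\{0,1\}^n}\bigl(\tfrac12 x^{\top}Mx+\tfrac{p}{2}\mathbf{1}^{\top}x\bigr)\ge\tfrac12\max_{x\in\{0,1\}^n}x^{\top}Mx$, so it suffices to produce a $0/1$ vector on which the quadratic form of $M$ is large. The linear-algebraic input comes from two trace computations: $\operatorname{tr}M=-pn$ and $\operatorname{tr}M^2=2m-4pm+p^2n^2=\Theta(d(n-d))$. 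Since $A$ has $\lambda_1\ge d$ and $M$ is a rank-one negative perturbation of $A$, interlacing gives $\mu_i(M)\ge\lambda_{i+1}(A)$; as $\sum_{i\ge2}\lambda_i(A)=-\lambda_1$ is small relative to $\sum_{i\ge2}\lambda_i(A)^2=\Theta(d(n-d))$, the positive and negative parts of the spectrum are balanced, and I get both $\sum_{\mu_i>0}\mu_i^2=\Omega(d(n-d))$ and a positive eigenvalue $\mu_1\gtrsim\sqrt{d(1-d/n)}$. This is the Alon--Boppana-type step; for irregular $G$ the vector $M\mathbf{1}$ is nonzero, but its size is controlled by the degree variance, and a large degree variance produces a dense subset (hence discrepancy) directly.

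The core of the sparse regime is a single-eigenvector randomized rounding. Given a unit eigenvector $v$ with $Mv=\mu v$, $\mu>0$, sample each coordinate independently as $x_i\sim\mathrm{Bernoulli}\bigl(\tfrac12+c\,v_i\bigr)$ with $c=\tfrac{1}{2\|v\|_\infty}$ to keep the probabilities in $[0,1]$. Expanding $\mathbb{E}[x^{\top}Mx]$ and using $v\perp\mathbf{1}$, the cross term $c^2 v^{\top}Mv=c^2\mu$ dominates, while the diagonal contribution $-p\sum_i q_i$ and the $\tfrac14\mathbf{1}^{\top}M\mathbf{1}$ term are only $O(d)$. Thus $\disc^{+}(G)\gtrsim \mu\cdot\|v\|_\infty^{-2}-O(d)$, i.e. the surplus scales like $\mu$ times the participation ratio of $v$. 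If $v$ is delocalized, $\|v\|_\infty^{-2}=\Omega(n)$ and I obtain $\disc^{+}(G)\gtrsim n\mu\gtrsim d^{1/2}n$, which is the first regime. The threshold $d\le n^{2/3}$ is exactly where I can still guarantee enough delocalization (equivalently, rule out that all of the positive spectral energy hides in a few nearly-clique-like principal submatrices).

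For denser graphs delocalization can genuinely fail, and I switch to two complementary bounds and optimize their trade-off. On the one hand, a localized eigenvector with eigenvalue $\mu$ supported on a set $S$ forces $G[S]$ to be dense, which I exploit either by taking $U=S$ directly or by recursing on $G[S]$ across dyadic eigenvalue scales (this layering is the source of the $\log n$ factor). On the other hand, I pass to the semidefinite relaxation $\max\{\langle M,X\rangle:X\succeq0,\ X_{ii}\le1\}$ of $\max_x x^{\top}Mx$, whose value is bounded below by the positive spectral mass $\sum_{\mu_i>0}\mu_i$, and round it by a Charikar--Wirth-type procedure losing only a logarithmic factor. Balancing the single-vector bound $\mu_1\cdot\mathrm{IPR}$ against the SDP bound $\bigl(\sum\mu_i^{+}\bigr)/\log n$, subject to the spectral constraints $\sum_{\mu_i>0}\mu_i^2=\Omega(d(n-d))$ and $\mu_1\le\sqrt{\operatorname{tr}M^2}$, is what yields the intermediate law $n^2/d$ and the dense law $d^{1/4}n/\log n$, with the crossovers falling at $n^{2/3}$ and $n^{4/5}$.

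The main obstacle is precisely this balancing and the control of eigenvector localization: the clean $n\mu$ rounding is optimal only while $d\le n^{2/3}$, and beyond that one must argue that an adversary cannot simultaneously make the top eigenvalue small, concentrate all positive spectral energy into a few localized directions, and avoid dense principal submatrices. Getting the exact exponents in the two denser regimes requires tracking how the participation ratio, the number of large positive eigenvalues, and the total energy constrain one another, and verifying that the worst case for the lower bound is met by the structured (localized, low-$\lambda_2$) extremal graphs rather than by pseudorandom graphs. I expect the careful execution of this trade-off, together with the rounding error analysis for irregular $G$, to be the delicate part; the trace estimates and the delocalized rounding are comparatively routine.
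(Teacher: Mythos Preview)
Your outline has the right high-level shape (an SDP/spectral surrogate plus randomized rounding, with Grothendieck-type loss explaining the $\log n$), but there are two genuine gaps where the paper supplies ideas you do not.

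\textbf{Sparse regime.} Your single-eigenvector rounding yields $\disc^{+}(G)\gtrsim \mu/\|v\|_\infty^2$, and you then \emph{assume} that for $d\le n^{2/3}$ the top positive eigenvector of $M$ is delocalized, i.e.\ $\|v\|_\infty=O(n^{-1/2})$. You give no argument for this, and in general it is false: a graph can have a localized positive eigenvector (supported on a small dense piece) while still satisfying the trace identities you wrote down. ``Recursing on $G[S]$'' when $v$ is localized is not a proof; you would need to show that every localized direction produces a subgraph whose induced discrepancy is already $\Omega(d^{1/2}n)$, and that is not automatic. The paper sidesteps delocalization entirely: instead of an eigenvector it uses the explicit neighborhood vectors $x_v$ with $x_v(u)=1$ if $u=v$, $x_v(u)=z$ if $uv\in E(G)$, and $0$ otherwise, and then applies hyperplane rounding. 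The value of $z$ is chosen so that the linear term $\sum_{u\sim v}\langle y_u,y_v\rangle\approx zdn$ beats the quadratic error $\sum_{u\neq v}\langle y_u,y_v\rangle\le z^2\cdot(\text{number of paths of length }2)\le z^2 n\Delta^2$. Optimizing $z$ gives both $d^{1/2}n$ and $n^2/d$ directly, with the crossover at $d=n^{2/3}$ falling out of the computation rather than from a delocalization threshold.

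\textbf{Dense regime.} For $d\ge n^{4/5}$ you propose to ``balance'' $\mu_1\cdot\mathrm{IPR}$ against $\sum\mu_i^{+}/\log n$ and assert that the optimum is $d^{1/4}n/\log n$, but you show no calculation producing that exponent, and in fact the two ingredients you name are not enough. The paper's crucial extra parameter is $\Lambda=-\sum_{i\ge 2}\lambda_i^3$ (equivalently $d^3$ minus the triangle homomorphism count). When $\Lambda$ is small, the inequality $\Lambda_1\Lambda_3\ge\Lambda_2^2$ for the negative eigenvalues forces $\sum_{i\ge 2,\lambda_i>0}\lambda_i$ to be large, and the SDP bound already gives $d^{1/2}n$. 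When $\Lambda$ is large, the paper feeds the SDP the Hadamard product $X=Y\circ\bigl(I-\tfrac{1}{t}A+E\bigr)$ with $Y=\tfrac{1}{\Delta}\sum_{i\ge 2}\lambda_i^2 v_iv_i^{T}$ and $t=d^{1/2}$; the point is that $\langle Y,A\rangle=-\Lambda/\Delta$ is very negative, so multiplying by $-\tfrac{1}{t}A$ turns it into a large positive contribution, and the correction $E$ restores positive-semidefiniteness while costing only $O(\Lambda^{1/2}d^{-3/4}\pdisc(G)^{1/2})$. Solving the resulting inequality for $\pdisc(G)$ is what produces the $d^{1/4}$. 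None of this mechanism is visible in your outline; the ``adversary cannot simultaneously\dots'' paragraph is a description of the difficulty, not a resolution of it.
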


\noindent
We want to emphasize that an immediate corollary of this theorem, greatly extends the main result of Alon \cite{Alon93} about the minimum bisection of $d$ regular $n$-vertex graphs $G$.
It shows that $\dfc(G) >cd^{1/2}n$ even when $d$ is as large as $n^{2/3}$, and that $\dfc(G) >cn^2/d$ for $n^{2/3}\leq d\leq n^{4/5}$. As we will show in the next section both of these bounds are the best possible if $d\leq n^{3/4}$.
Finally for $n^{4/5}\leq d\leq (1/2-\varepsilon)n$, the deficit is always at least $c d^{1/4}n/\log n$.
 
    The proof of Theorem \ref{thm:main} is based on semidefinite programming and linear algebraic techniques. The proof of the first two inequalities is quite short, and is partially motivated by recent results on the MaxCut problem \cite{BJS,Max-Cut-SDP,GJS}, see Section \ref{sect:sparse}. The proof of the third inequality, however, is much more involved. First, we  show that $\disc^{+}(G)$ is close to the solution of a semidefinite optimization program, see Section \ref{sect:semidefinite}. 
    Then, we present a number of lower bounds on this problem based on the eigenvalue decomposition of the adjacency matrix of $G$, see Section \ref{sect:easy}. Finally, Section \ref{sect:hard} is the most involved, but also the most fascinating part of our paper, where we prove the third inequality. While the proof of  the first two inequalities may have a combinatorial shadow as well, the proof of the third inequality is heavily based on linear algebraic and geometrical arguments.

	\subsection{Upper bounds}\label{sect:upper_bound}
	
	In this section, we discuss the tightness of Theorem \ref{thm:main}. As we mentioned earlier, the random $d$-regular graph satisfies $\disc^{+}(G)=\Theta(d^{1/2}n)$ for $d\leq n/2$, so let us consider $d\gg n^{2/3}$.
	
	It the case of regular graphs, the positive discrepancy is controlled by $\lambda_2$, the second largest eigenvalue of the adjacency matrix of $G$, while the negative discrepancy is controlled by $\lambda_n$, the smallest eigenvalue. In particular, $\disc^{+}(G)\leq \frac{\lambda_2}{2}n+d$, see Lemma \ref{lemma:lambda_2_disc}. Therefore, in order to find a graph with small positive discrepancy, it is enough to look for graphs with small $\lambda_2$.  There are several good candidates among \emph{strongly-regular} and \emph{distance-regular} graphs, see \cite{BCN89} for a  detailed overview of the topic, and \cite{DKT14} for a more recent survey.
	
	A graph $G$ is \emph{strongly-regular} of parameters $(n,d,r,s)$ if $G$ is $d$-regular on $n$ vertices, any two adjacent vertices have exactly $r$ common neighbors, and any two non-adjacent vertices have $s$ common neighbors. It is well known (see e.g. Proposition 2.12 in \cite{KS06}) that the second eigenvalue of $G$ can be computed as 
     $$\lambda_2= \frac{1}{2}\left(r-s+\sqrt{(r-s)^2+4(d-s)}\right).$$
 A construction, due to R. Metz (see \cite{LB84}, Section 7.A), considers a non-singular elliptic quadric in the projective space $PG(5,q)$ to define a strongly-regular graph $Q=Q(q)$ with parameters 
	\begin{align*}
		n&=\frac{1}{2}q^2(q^2-1)\\
		d&=(q-1)(q^2+1)\\
		r&=(q-1)(q+2)\\
		s&=2q(q-1).
	\end{align*} 
	The second largest eigenvalue of this graph is $\lambda_2=q-1$, while the smallest eigenvalue is $\lambda_n=-(q-1)^2$. Therefore, these provide an infinite family of regular graphs with $d=\Theta(n^{3/4})$ and $\lambda_2=\Theta(n^{1/4})=\Theta(n/d)$, and thus having positive discrepancy $O(n^2/d)$. We note that there are further examples with similar parameters, see e.g. \cite{Brouwer}.  We can use these graphs to construct for every $d\in [n^{2/3},n^{3/4}]$ a graph of average degree $\approx d$ and positive discrepancy $O(n^2/d)$.
	
	\begin{prop}\label{prop:upper_bound}
		Let $\varepsilon\in (0,1/2)$, then there exists $c$ such that the following holds for every sufficiently large $n$. Let $n^{2/3}\leq d\leq n^{3/4}$, then there exists a graph on $n$ vertices of average degree between $d(1-\varepsilon)$ and $d(1+\varepsilon)$ such that $\disc^{+}(G)\leq cn^2/d$.
	\end{prop}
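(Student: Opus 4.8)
The plan is to start from Metz's strongly-regular graph $Q(q)$, which has $n_0 = \frac{1}{2}q^2(q^2-1) = \Theta(q^4)$ vertices, degree $d_0 = \Theta(q^3) = \Theta(n_0^{3/4})$, and second eigenvalue $\lambda_2 = q-1 = \Theta(n_0^{1/4}) = \Theta(n_0/d_0)$. By Lemma \ref{lemma:lambda_2_disc}, since $Q(q)$ is regular, $\disc^{+}(Q(q)) \leq \frac{\lambda_2}{2}n_0 + d_0 = O(n_0^{5/4}) = O(n_0^2/d_0)$. So for the single value $d \approx n^{3/4}$ the base construction already works; the content of the proposition is covering the whole range $n^{2/3}\leq d\leq n^{3/4}$ and the arbitrary vertex count $n$, while hitting the prescribed average degree up to a $(1\pm\varepsilon)$ factor.

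First I would handle the vertex count. The primes $q$ (or prime powers) for which $Q(q)$ exists are spaced so that consecutive values of $n_0 = \Theta(q^4)$ differ by a factor $1+o(1)$; so given the target $n$, one can pick $q$ with $n_0 = \frac{1}{2}q^2(q^2-1) \leq n$ and $n_0 = n(1-o(1))$, then pad the graph with $n - n_0 = o(n)$ isolated vertices. Isolated vertices do not change $e(G)$ and only lower the density slightly, and one checks directly from the definition of $\disc^+$ that adding isolated vertices can only decrease the positive discrepancy (the optimal $U$ never benefits from including a vertex of degree below the density threshold); the average degree changes by a factor $1+o(1)$, which is absorbed into $\varepsilon$ for large $n$.

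To obtain smaller average degrees $d \in [n^{2/3}, n^{3/4}]$, I would take a disjoint union of several scaled copies. Concretely, to reach target degree $d$, choose a block size $m \leq n$ so that a single copy of $Q(q)$ on $m$ vertices has degree $\Theta(m^{3/4}) \approx d$, i.e.\ $m \approx d^{4/3}$; this is legitimate precisely because $d^{4/3} \leq n$ exactly when $d \leq n^{3/4}$, and $d^{4/3}\geq n^{8/9}$ is large when $d\geq n^{2/3}$, so the blocks are individually large. Then place $t = \lfloor n/m \rfloor$ vertex-disjoint copies of such a block (again padding with $o(n)$ isolated vertices to reach exactly $n$). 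The disjoint union is $d'$-regular on its support with $d' = \Theta(m^{3/4}) = \Theta(d)$, and tuning $q$ lets us hit $d(1\pm\varepsilon)$. For the positive discrepancy of the union, the key observation is that a disjoint union does not increase $\disc^+$ beyond the best single block up to the global density correction: for any $U$, writing $U_i = U \cap V(G_i)$, the surplus $e(G[U]) - p\binom{|U|}{2} = \sum_i \big(e(G_i[U_i]) - p\binom{|U_i|}{2}\big) - p\sum_{i<j}|U_i||U_j|$, and since each block has the same internal density $p_0 \geq p$, each term is at most $\disc^+(G_i) = O(m^2/m^{3/4}) = O(m^{5/4})$; summing over the $t = \Theta(n/m)$ blocks gives $O(t\cdot m^{5/4}) = O(n m^{1/4}) = O(n \cdot d^{1/3}) = O(n^2/d)$, using $m\approx d^{4/3}$ and $d\geq n^{2/3}$.

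The main obstacle is bookkeeping the density discrepancy between the $p_0$ of each block and the global $p$. Because the isolated vertices and the between-block non-edges drag the global density $p$ below the block density $p_0$, the naive per-block bound uses $p_0$ rather than $p$, and one must verify that replacing $p$ by $p_0$ costs only an additive $O(n^2/d)$; this follows from $p_0 - p = O(p_0 \cdot (1 - n_0/n)) = o(p_0)$ together with $p_0\binom{|U|}{2}=O(p_0 n^2)=O(d' n) = O(n^{7/4})$, which is comfortably within $O(n^2/d)$ for $d\leq n^{3/4}$. I would close by assembling these estimates into the stated bound $\disc^+(G)\leq c\,n^2/d$, with $c$ depending only on the implied constants in Metz's parameters and on $\varepsilon$.
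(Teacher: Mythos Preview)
Your disjoint-union construction does not work, and the error is exactly in the density bookkeeping you flag as ``the main obstacle''. Take $t\ge 2$ disjoint copies of $Q(q)$ on $m$ vertices each (no isolated padding, $n=tm$), and let $U=V(G_1)$ be a single block. Then there are no cross terms, and
\[
\disc_G(U)=e(G_1)-p\binom{m}{2}=(p_0-p)\binom{m}{2}.
\]
Here $p/p_0 = t\binom{m}{2}/\binom{tm}{2}\approx 1/t$, so $p_0-p=\Theta(p_0)$ whenever $t\ge 2$; this is \emph{not} $o(p_0)$ as you write in the last paragraph. Plugging in $p_0\approx d'/m$ and $m\approx d^{4/3}$ gives $\disc_G(U)=\Theta(d' m)=\Theta(d^{7/3})$. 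For $d\ge n^{2/3}$ one has $d^{7/3}\ge n^{14/9}\gg n^{4/3}\ge n^2/d$, so the positive discrepancy of your graph is far above the target. The mistake in your per-block estimate is the line ``each term is at most $\disc^+(G_i)$'': the term $e(G_i[U_i])-p\binom{|U_i|}{2}$ uses the \emph{global} density $p$, whereas $\disc^+(G_i)$ is defined with the block density $p_0$, and the gap $(p_0-p)\binom{|U_i|}{2}$ is the dominant contribution. (Relatedly, adding isolated vertices \emph{increases} $\disc^+$, not decreases it: $e(U)$ is unchanged while $p$ drops.)

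The paper avoids this by going in the opposite direction: instead of assembling small Metz blocks into a graph on $n$ vertices, it picks $q$ so that $N/D\approx n/d$, which forces $N\ge n$ in the range $d\le n^{3/4}$, and then takes an $n$-vertex \emph{induced subgraph} $G$ of the single graph $Q(q)$ with density at least that of $Q$. Since $p_G\ge p_Q$, one has $\disc_G(U)\le \disc_Q(U)$ for every $U$, and Lemma~\ref{lemma:lambda_2_disc} applied inside $Q$ gives $\disc_Q(U)\le \tfrac{\lambda_2}{2}|U|+D=O(n\cdot N/D)+O(n)=O(n^2/d)$. The point is that a subgraph inherits the $\lambda_2$-control of the host, whereas a disjoint union creates a global density mismatch that a single component already witnesses.
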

        Recently, Davis, Huczynska, Johnson, and Polhill \cite{DHJP} constructed a family of strongly-regular graphs based on partial difference sets with the following  parameters. For every odd prime $p$ and integer $m\geq 2$, the graph $D(p,m)$ satisfies
        \begin{align*}
		n&=p^{3m}\\
		d&=(p^{2m-1}-p^m+p^{m-1})(p^m-1)\\
		r&=p^m-p^{m-1}+(p^{2m-1}-p^{m}+p^{m-1})(p^{m-1}-2)\\
		s&=(p^{2m-1}-p^m+p^{m-1})(p^{m-1}-1).
	\end{align*}
        This graph has second eigenvalue $\lambda_2=p^{m}-p^{m-1}=\Theta(n^{1/3})$. Note that when $p$ is a constant (e.g. $p=3$), this gives an infinite family of graphs with $d=(1+o(1))\frac{n}{p}=\Omega(n)$ and positive discrepancy $O(n^{4/3})$. Also, taking $m$ to be a constant instead, we get family with $d=\Theta(n^{1-1/3m})$ and $\lambda_2=\Theta(n^{1/3})=\Theta(d^{1/(3-1/m)})$. 
        
        We believe that in the range $n^{3/4}\ll d\leq (1/2-\varepsilon)n$, the smallest possible value of the discrepancy is $\Theta(d^{1/3}n)$. We know that this is indeed the case for $d\approx n^{3/4}$, and there is a matching upper bound for $d=\Theta(n)$. See also the discussion about the eigenvalues (Section \ref{sect:intr_eigen}) for even more evidence.
	
	\begin{conjecture}
		Let $n^{3/4}<d\leq n(1/2-\varepsilon)$, and let $G$ be a $d$-regular graph on $n$ vertices. Then 
		$$\disc^{+}(G)=\Omega_{\varepsilon}(d^{1/3}n).$$
	\end{conjecture}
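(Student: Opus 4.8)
The statement is a conjecture, so what follows is a proposed line of attack rather than a finished argument. Since $G$ is assumed regular, the natural first move is to pass from the combinatorial quantity $\disc^{+}(G)$ to the spectrum of the adjacency matrix $A$. One direction is already recorded in Lemma \ref{lemma:lambda_2_disc}: we have $\disc^{+}(G)\le\frac{\lambda_2}{2}n+d$, and since $d^{1/3}n\gg d$ throughout the range $n^{3/4}<d\le(1/2-\varepsilon)n$, this merely says that a small positive discrepancy forces a small $\lambda_2$. The plan is to establish an approximate converse through the semidefinite relaxation developed in Sections \ref{sect:semidefinite} and \ref{sect:easy}, thereby reducing the conjecture to two essentially separate tasks: (I) a sharp \emph{dense Alon--Boppana} bound $\lambda_2=\Omega_{\varepsilon}(d^{1/3})$, and (II) a rounding step realizing a spectral lower bound of this order as an actual vertex set $U$. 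That $d^{1/3}$ is the correct exponent, rather than an artifact, is visible from the extremal graphs of Section \ref{sect:upper_bound}: the elliptic-quadric graphs $Q(q)$ have $\lambda_2=\Theta(n/d)=\Theta(d^{1/3})$ at $d\approx n^{3/4}$, while $D(p,m)$ has $\lambda_2=\Theta(d^{1/3})$ at $d=\Theta(n)$, so both tasks must be simultaneously tight for these algebraic examples.

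For task (I), I would run the trace/moment method on the non-principal spectrum $\lambda_2\ge\cdots\ge\lambda_n$, which satisfies $\sum_{i\ge2}\lambda_i=-d$ and $\sum_{i\ge2}\lambda_i^2=d(n-d)$; counting walks of length four, equivalently summing squared codegrees, yields in addition $\sum_{i\ge2}\lambda_i^4=\Omega(nd^2)$. The subtlety — and the reason the hypothesis $d\le(1/2-\varepsilon)n$ is indispensable — is that this spectral mass can hide almost entirely in the negative eigenvalues: the complete bipartite graph at $d=n/2$ has $\lambda_2=0$, and already $D(p,m)$ shows that the \emph{positive} spectral mass $\sum_{i\ge2,\;\lambda_i>0}\lambda_i^2$ may be as small as $\Theta(nd^{2/3})$, with the bulk of $d(n-d)$ carried by comparatively few large negative eigenvalues. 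Thus low moments cannot isolate $\lambda_2$ on their own. My proposal is to combine the density constraint, which caps the negative mass by bounding how close $G$ can be to bipartite, with high even moments — ideally counting \emph{non-backtracking} closed walks so as to suppress the contribution of $\lambda_n$ — to force a positive eigenvalue of size $\Omega_{\varepsilon}(d^{1/3})$. The target can be phrased as $\sum_{i\ge2,\;\lambda_i>0}\lambda_i^2=\Omega_{\varepsilon}(nd^{2/3})$, which together with $\sum_{\lambda_i>0}\lambda_i^2\le(n-1)\lambda_2^2$ gives $\lambda_2=\Omega_{\varepsilon}(d^{1/3})$.

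For task (II), once a positive eigenvalue, or better a cluster of them, of size $\gtrsim d^{1/3}$ is available, one rounds the associated eigenvector by thresholding. Writing $M=A-pJ$, one has $M\mathbf{1}\approx 0$ and $\disc^{+}(G)\approx\tfrac12\max_{U}\mathbf{1}_U^{\top}M\mathbf{1}_U$, so a delocalized eigenvector $v$, with all entries $O(1/\sqrt n)$ and $v^{\top}Mv=\lambda_2$, gives after thresholding $\disc^{+}(G)=\Omega(\lambda_2 n)=\Omega(d^{1/3}n)$. The danger is a \emph{localized} eigenvector, where thresholding produces a tiny set; this is precisely where the current proof loses a logarithm and part of the exponent. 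Here I would exploit that the estimate from task (I) is attained by the extremal graphs with a top-eigenspace of dimension $\Omega(n)$, and run the geometric rounding of Section \ref{sect:hard} inside this high-dimensional space, where delocalization is generic and a near-lossless rounding should be achievable.

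The main obstacle is task (I): separating $\lambda_2$ from the bottom of the spectrum with the sharp exponent $1/3$. All the elementary symmetric information — the second and fourth moments, the trace — is consistent with $\lambda_2$ as small as a constant, as the near-bipartite and strongly regular examples demonstrate, so the argument must genuinely use the asymmetry created by $d\le(1/2-\varepsilon)n$ together with a fine, high-order spectral analysis. I expect this to be the crux, with the rounding of task (II) being the secondary difficulty responsible for the present gap between the proven $d^{1/4}/\log n$ and the conjectured $d^{1/3}$.
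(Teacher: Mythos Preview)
Your decomposition into tasks (I) and (II) is natural, but your assessment of which is the obstacle is backwards. Task (I) --- the bound $\lambda_2=\Omega_\varepsilon(d^{1/3})$ --- is already a \emph{theorem} in this paper: it is exactly the third case of Theorem~\ref{thm:eigenvalues}, proved in Section~\ref{sect:eigenvalue} via Theorem~\ref{thm:lambda_2_and_delta}. The argument there is not the moment/non-backtracking-walk method you sketch; instead it sandwiches $\Lambda=-\sum_{i\ge 2}\lambda_i^3$ between $c_1dn\lambda_2^2$ and $nd^2/(c_1\lambda_2)$ by feeding the test matrix $Y\circ\bigl(I-\tfrac{1}{\lambda_2}A+\tfrac{d-\lambda_2}{\lambda_2 n}J\bigr)$ into $\pdisc(G)\le\lambda_2 n$ (Lemma~\ref{lemma:lambda_2}). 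So the difficulties you list for (I) --- spectral mass hiding in negative eigenvalues, tightness at $D(p,m)$ --- are already handled, and the attack you propose on (I) is unnecessary.

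The genuine obstacle is therefore entirely task (II), and here your proposal does not close the gap. You frame (II) as ``$\disc^+(G)=\Omega(\lambda_2 n)$ via eigenvector thresholding,'' but this is strictly stronger than the conjecture requires and is not known even for regular graphs in this range; the paper explicitly cautions (paragraph after Lemma~\ref{lemma:lambda_2_disc}) that $\disc^+$ can be much smaller than $\lambda_2 n$. More concretely, combining the two $\pdisc$ lower bounds behind Theorem~\ref{thm:lambda_2_and_delta} gives $\pdisc(G)\ge c\,n\sqrt{d/\lambda_2}$, which is $\Omega(d^{1/3}n)$ only when $\lambda_2=O(d^{1/3})$; the drop to $d^{1/4}$ occurs precisely when $\lambda_2$ is \emph{large}, e.g.\ $\Theta(d^{1/2})$ as in a random regular graph. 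In that regime your high-multiplicity delocalization heuristic is unavailable --- a random regular graph has simple spectrum --- and your plan offers nothing to replace it. What would actually advance the conjecture is either a sharper test-matrix construction in the style of Section~\ref{sect:hard} giving $\pdisc(G)=\Omega(d^{1/3}n)$ unconditionally, or a direct combinatorial argument producing a set $U$ with $\disc(U)=\Omega(d^{1/3}n)$ when $\lambda_2\gg d^{1/3}$; the Grothendieck $\log n$ is a separate and secondary issue.
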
 

 \subsection{MaxCut in very dense graphs}

 Theorem \ref{thm:main} has an interesting corollary about the maximum cut. A classical result of Edwards \cite{Edwards1,Edwards2} states that  every graph $G$ with $n$ vertices and $m$ edges satisfies $\surplus(G)\geq \frac{\sqrt{8m+1}-1}{8}$ and this is tight when $G$ is the complete graph on an odd number of vertices. In general, if $G$ is the disjoint union of a constant number of cliques, then $\surplus(G)=O(\sqrt{m})$. Then, the question naturally arises whether this bound can be improved if $G$ is far from such a configuration. One way to ensure this is to assume that $G$ avoids some graph $H$ as a subgraph. The study of MaxCut in $H$-free graphs was initiated by Erd\H{o}s and Lov\'asz (see \cite{erdos}) in the 70's, and substantial amount of research was devoted to this problem since, see e.g. \cite{AlonMaxCut,AKS05,BJS,GJS,GW95}. 
 
 Here, we study a different setup. We consider regular graphs with degree between $n(1/2+\varepsilon)$ and $n(1-\varepsilon)$. This is a large family of graphs that are also automatically far from the unions of cliques. In this case, we get the following significant improvement over the Edwards bound.

 \begin{theorem}\label{thm:maxcut}
     For every $\varepsilon>0$ there exists $c>0$ such that the following holds. Let $G$ be a $d$-regular graph on $n$ vertices, where $d\in [(\frac{1}{2}+\varepsilon)n,(1-\varepsilon)n]$. Then $$\surplus(G)\geq \frac{cn^{5/4}}{\log n}.$$
 \end{theorem}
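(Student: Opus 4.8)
The plan is to deduce Theorem \ref{thm:maxcut} from the third case of Theorem \ref{thm:main} by means of two reductions: first from the surplus to the negative discrepancy, and then from the negative discrepancy of $G$ to the \emph{positive} discrepancy of its complement. Since $G$ is regular, Lemma \ref{lemma:surplus} gives $\surplus(G)=\Theta(\disc^{-}(G))$, so it suffices to show $\disc^{-}(G)=\Omega(n^{5/4}/\log n)$.

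The key observation is the complementation identity $\disc^{-}(G)=\disc^{+}(\bar{G})$. Indeed, writing $p=d/(n-1)$ for the density of $G$, the complement $\bar{G}$ has density $1-p$, and for every $U\subset V(G)$ we have $e(\bar{G}[U])=\binom{|U|}{2}-e(G[U])$, so $e(\bar{G}[U])-(1-p)\binom{|U|}{2}=p\binom{|U|}{2}-e(G[U])$. Taking the maximum over $U$ on both sides yields $\disc^{+}(\bar{G})=\disc^{-}(G)$. Now $\bar{G}$ is $d'$-regular with $d'=n-1-d$, and the hypothesis $d\in[(\frac12+\varepsilon)n,(1-\varepsilon)n]$ translates into $d'\in[\varepsilon n-1,(\frac12-\varepsilon)n-1]$. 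For all sufficiently large $n$ the lower endpoint satisfies $\varepsilon n-1>n^{4/5}$, while the upper endpoint lies below $(\frac12-\varepsilon)n$; hence $d'$ falls squarely in the third regime of Theorem \ref{thm:main}. Applying that theorem to $\bar{G}$ with the same $\varepsilon$ produces a constant $c_1=c_1(\varepsilon)$ with $\disc^{+}(\bar{G})>c_1(d')^{1/4}n/\log n$. Since $d'\geq\varepsilon n-1\geq(\varepsilon/2)n$ for large $n$, we get $(d')^{1/4}\geq(\varepsilon/2)^{1/4}n^{1/4}$, and therefore $\disc^{-}(G)=\disc^{+}(\bar{G})>c_2\,n^{5/4}/\log n$ for some $c_2=c_2(\varepsilon)$. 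Combining with Lemma \ref{lemma:surplus} gives $\surplus(G)=\Theta(\disc^{-}(G))\geq c\,n^{5/4}/\log n$, as required.

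I expect essentially all of the genuine difficulty to be absorbed into Theorem \ref{thm:main}; the corollary itself is bookkeeping. The two points that require care are verifying that the complementation identity correctly accounts for the density shift from $p$ to $1-p$, and checking that the degree $d'=n-1-d$ of the complement really lands in the dense regime $[n^{4/5},(\frac12-\varepsilon)n]$ — both of which follow directly from the hypotheses $d\in[(\frac12+\varepsilon)n,(1-\varepsilon)n]$ together with the fact that $\varepsilon n$ dominates $n^{4/5}$. The strength of the bound, namely the exponent $5/4$, comes precisely from the factor $(d')^{1/4}=\Theta(n^{1/4})$ in the dense regime, which is why the regularity assumption and the restriction of $d$ to a dense but bounded-away-from-$n$ window are both essential.
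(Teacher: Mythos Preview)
Your proposal is correct and follows exactly the route the paper takes: it derives the theorem from the third case of Theorem~\ref{thm:main} via the chain $\surplus(G)=\Theta(\disc^{-}(G))=\Theta(\disc^{+}(\overline{G}))$, using Lemma~\ref{lemma:surplus} for the first step and the complementation identity for the second. The paper's own argument is a one-line remark to this effect, and you have simply (and correctly) supplied the bookkeeping details checking that $d'=n-1-d$ lands in the regime $[n^{4/5},(\tfrac12-\varepsilon)n]$.
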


 This theorem immediately follows from the third inequality in Theorem \ref{thm:main}, by noting that $\Theta(\surplus(G))=\disc^{-}(G)=\disc^{+}(\overline{G})$; see Lemma \ref{lemma:surplus} for the first equality. We point out that by taking the complement of the graph $D(3,m)$ defined in Section \ref{sect:upper_bound}, we get that the lower bound in Theorem \ref{thm:maxcut} cannot be improved beyond $\Theta(n^{4/3})$.
	
	\subsection{The second eigenvalue}\label{sect:intr_eigen}
	
	The Alon-Boppana theorem \cite{alon-boppana} is one of the cornerstone results of spectral graph theory. It states that if $G$ is an $n$-vertex  $d$-regular graph of diameter $D$, then the second largest eigenvalue $\lambda_2$ of the adjacency matrix is at least $$2\sqrt{d-1}-\frac{2\sqrt{d-1}-1}{\lfloor D/2\rfloor}.$$
	In particular, if $D\rightarrow \infty$ (which is satisfied in case $d=n^{o(1)}$), one has $\lambda_2\geq 2\sqrt{d-1}-o(1)$. So called Ramanujan graphs show that the latter bound is tight \cite{ramanujan}.   Unfortunately, in case $D\in \{2,3\}$, which can already happen if $d\approx n^{1/3}$, the Alon-Boppana bound does not tell us much. Note that the examples from Section \ref{sect:upper_bound} also show that we cannot expect $\lambda_2=\Omega(d^{1/2})$ to hold in general, and in case $d=n/2$, the complete bipartite graphs achieves $\lambda_2=0$. Somewhat surprisingly, there are not many results that address lower bounds on $\lambda_2$ in case $n^{1/3}\ll d<n/2$. Recently, Balla \cite{Balla21} proved that $\lambda_2=\Omega(d^{1/3})$ if $d\leq (1/2-\varepsilon)n$, using fairly involved geometric arguments and matrix projections. We point out that this bound also follows from a very short argument of Ihringer \cite{Ihringer} after a bit of work. Indeed, \cite{Ihringer}, Proposition 1.3 implies that $\lambda_2=\Omega(\sqrt{|\lambda_n|})$, where $\lambda_n$ is the smallest eigenvalue. However, the inequality $(1+\lambda_2)\cdot|\lambda_n|\geq \Omega(d)$ is easy to establish (see Proposition \ref{prop:product}), implying the desired lower bound $\lambda_2=\Omega(d^{1/3})$.
	
	As discussed in Section \ref{sect:upper_bound}, we have $\frac{\lambda_2}{2}n+d\geq \disc^{+}(G)$ (Lemma \ref{lemma:lambda_2_disc}), so lower bounds on $\disc^{+}(G)$ imply lower bounds on $\lambda_2$. In particular, Theorem \ref{thm:main} implies that $\lambda_2=\Omega(d^{1/2})$ if $d\leq n^{2/3}$, $\lambda_2 = \Omega(n/d)$ for $n^{2/3}\leq d \leq n^{4/5}$. These bounds are also implicit in a recent work of Balla \cite{Balla21}. For the interval $n^{4/5}\leq d \leq (1/2-\varepsilon)n$, the discrepancy lower bound implies that $\lambda_2 = \Omega(d^{1/4}/\log n)$. In the next theorem, we improve this bound in the range  $n^{3/4}\leq d\leq (1/2-\varepsilon)n$ to $\Omega(d^{1/3})$. Our proof has some similarity to that of Balla \cite{Balla21} and Ihringer \cite{Ihringer}, but we also relate $\lambda_2$ to the number of triangles, see Theorem \ref{thm:lambda_2_and_delta}.
	
	\begin{theorem}\label{thm:eigenvalues}
		There exists $c>0$, and for every $\varepsilon>0$ there exists $c_1>0$ such that  the following holds for every sufficiently large $n$. Let $G$ be a $d$-regular graph on $n$ vertices, and let $\lambda_2$ denote the second largest eigenvalue of the adjacency matrix of $G$. Then
		$$\lambda_2>\begin{cases} cd^{1/2} &\mbox{if }d\leq n^{2/3},\\
			c n/d & \mbox{if }n^{2/3}\leq d\leq n^{3/4},\\
			c_1d^{1/3} & \mbox{if }n^{3/4}\leq d\leq (1/2-\varepsilon)n.\end{cases}$$
	\end{theorem}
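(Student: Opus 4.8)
The plan is to obtain the first two bounds as immediate consequences of the discrepancy estimates already established, and to reserve a genuinely spectral argument for the dense range $n^{3/4}\le d\le(1/2-\varepsilon)n$. By Lemma~\ref{lemma:lambda_2_disc} we have $\disc^{+}(G)\le\frac{\lambda_2}{2}n+d$, so each lower bound on $\disc^{+}(G)$ from Theorem~\ref{thm:main} rearranges into one on $\lambda_2$. For $d\le n^{2/3}$, combining this with $\disc^{+}(G)>cd^{1/2}n$ gives $\lambda_2>2cd^{1/2}-2d/n$, and since $2d/n\le 2n^{-1/3}$ is negligible next to $d^{1/2}\ge1$, we get $\lambda_2>c'd^{1/2}$ for a suitable constant. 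For $n^{2/3}\le d\le n^{3/4}$ the bound $\disc^{+}(G)>cn^2/d$ (valid throughout $[n^{2/3},n^{4/5}]$) similarly yields $\lambda_2>2cn/d-2d/n>c'n/d$, because $n/d\ge n^{1/4}$ dominates $d/n\le n^{-1/4}$.

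The third range is the substantial one, since there the discrepancy route only delivers $\lambda_2=\Omega(d^{1/4}/\log n)$. Here I would bound $\lambda_2$ from below through the smallest eigenvalue $\lambda_n$, using two spectral facts. First, the product inequality $(1+\lambda_2)|\lambda_n|=\Omega(d)$ of Proposition~\ref{prop:product}: as every $\lambda_i$ with $i\ge2$ lies in $[\lambda_n,\lambda_2]$, summing $(\lambda_i-\lambda_2)(\lambda_i-\lambda_n)\le0$ over $i\ge2$ and inserting $\sum_{i\ge2}\lambda_i=-d$ and $\sum_{i\ge2}\lambda_i^2=d(n-d)$ gives $d(n-d)+d(\lambda_2+\lambda_n)+\lambda_2\lambda_n(n-1)\le0$, whence $(1+\lambda_2)|\lambda_n|=\Omega(d)$ using $n-d=\Omega(n)$. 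Second, the reverse estimate $|\lambda_n|=O(\lambda_2^2)$, which is the bound of Ihringer~\cite{Ihringer}; I would reprove it via triangles. Writing $t$ for the number of triangles, $\operatorname{tr}(A^3)=d^3+\sum_{i\ge2}\lambda_i^3=6t$; bounding the positive part of $\sum_{i\ge2}\lambda_i^3$ by $\lambda_2\sum_{i\ge2}\lambda_i^2=\lambda_2 d(n-d)$ and separating the term $\lambda_n^3$ ties $\lambda_2$, $|\lambda_n|$ and $t$ together, which is the content of Theorem~\ref{thm:lambda_2_and_delta}.

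With both facts in hand the theorem follows. If $\lambda_2<1$, then $(1+\lambda_2)|\lambda_n|=\Omega(d)$ forces $|\lambda_n|=\Omega(d)$, contradicting $|\lambda_n|=O(\lambda_2^2)=O(1)$ once $d$ is large; hence $\lambda_2\ge1$ and $1+\lambda_2\le2\lambda_2$. Substituting $|\lambda_n|\le C\lambda_2^2$ into $(1+\lambda_2)|\lambda_n|\ge c_0 d$ gives $2C\lambda_2^3\ge c_0 d$, that is $\lambda_2=\Omega(d^{1/3})$, which is the third bound.

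The crux is the reverse inequality $|\lambda_n|=O(\lambda_2^2)$, not the final combination. All of the crude power-sum manipulations (including the $\operatorname{tr}(A^2)$ computation above) collapse to the product inequality and say nothing new exactly when the triangle count satisfies $6t\approx d^3$, so a more careful estimate is needed precisely in that regime. Equally, the hypothesis $d\le(1/2-\varepsilon)n$ must enter decisively: the balanced complete bipartite graph has $d=n/2$, $\lambda_2=0$ and $|\lambda_n|=n/2$, so without the gap the estimate is simply false. Establishing $|\lambda_n|=O(\lambda_2^2)$ while genuinely exploiting this density gap — ruling out an almost-bipartite graph that is nonetheless a strong expander on its positive spectrum — is where I expect the real difficulty to lie.
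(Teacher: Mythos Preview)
Your handling of the first two ranges is correct and matches the paper: Lemma~\ref{lemma:lambda_2_disc} plus the discrepancy bounds of Theorem~\ref{thm:main_sparse} give $\lambda_2=\Omega(d^{1/2})$ and $\lambda_2=\Omega(n/d)$ immediately.

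For the dense range, your high-level plan --- combine Proposition~\ref{prop:product} with Ihringer's bound $|\lambda_n|=O(\lambda_2^2)$ --- is exactly the alternative route the paper's introduction mentions, and it is valid \emph{if} Ihringer's result is taken as a black box. The gap, which you yourself flag, is that you do not actually establish $|\lambda_n|=O(\lambda_2^2)$, and your proposed triangle derivation does not do it: bounding the positive part of $\sum_{i\ge2}\lambda_i^3$ by $\lambda_2 d(n-d)$ and isolating $\lambda_n^3$ only yields $|\lambda_n|^3\le\lambda_2 dn+\Lambda$, which is useless without an independent upper bound on $\Lambda=d^3-6t$. You also misread Theorem~\ref{thm:lambda_2_and_delta}: that statement never mentions $|\lambda_n|$. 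It instead sandwiches $\Lambda$ between $\Omega(nd^2/\lambda_2)$ and $O(dn\lambda_2^2)$, and comparing the two sides gives $\lambda_2^3=\Omega(d)$ directly --- no detour through $\lambda_n$ or Proposition~\ref{prop:product} is needed.

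The paper's proof of the crucial upper bound $\Lambda=O(dn\lambda_2^2)$ is semidefinite rather than power-sum, and this is where the hypothesis $d\le(\tfrac12-\varepsilon)n$ actually enters. With $t=\lambda_2$ one forms $Z=I-\tfrac{1}{t}A+\tfrac{d-t}{tn}J$; since $G$ is regular, the $J$ term cancels the $v_1$ direction and $\lambda_i\le\lambda_2$ for $i\ge2$ makes $Z\succeq 0$. Taking the Schur product with $Y=\tfrac1d(A^2-\tfrac{d^2}{n}J)\succeq 0$ produces a feasible matrix $Y\circ Z$ with bounded diagonal and $\disc(Y\circ Z)\ge\tfrac{\varepsilon\Lambda}{8d\lambda_2}$; combined with $\pdisc(G)\le\lambda_2 n$ (Lemma~\ref{lemma:lambda_2}) this gives $\Lambda=O(dn\lambda_2^2)$. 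The lower bound $\Lambda=\Omega(nd^2/\lambda_2)$ comes from Lemma~\ref{lemma:Lambda} together with Lemma~\ref{lemma:lambda_2}. This construction is the missing ingredient your proposal identifies but does not supply; note that once one has it, the route through $|\lambda_n|$ becomes unnecessary.
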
 
	
	The random $d$-regular graph satisfies $\lambda_2=\Theta(d^{1/2})$ for every $d$, so Theorem \ref{thm:eigenvalues} is sharp for $d\leq n^{2/3}$. The construction $Q=Q(q)$ described in Section \ref{sect:upper_bound} shows that it is also sharp at $d\approx n^{3/4}$. However, by relaxing regularity, it is also sharp for $n^{2/3}\leq d\leq n^{3/4}$. Indeed, fix some  $q$, and let $d(Q)$ denote the regularity of $Q$, and $\lambda_2(Q)$ denote the second eigenvalue of $Q$; recall that $\lambda_2(Q)=O(v(Q)/d(Q))$. We can choose $q$ such that $\frac{n}{d}=(1+o(1)) \frac{v(Q)}{d(Q)}$, then $v(Q)\geq n$. A random $n$ vertex induced subgraph of $Q$ then has all degrees $d(1+o(1))$ with high probability, and by the Cauchy interlacing theorem, 
	$$\lambda_2\leq \lambda_2(Q)\ll \frac{v(Q)}{d(Q)}\approx \frac{n}{d}.$$
 Moreover, by the strongly regular graph  $D(q,m)$ discussed in Section \ref{sect:upper_bound}, the bound $\lambda_2=\Omega(n^{1/3})$ is sharp when $d=\Omega(n)$. It remains open what the smallest possible value of $\lambda_2$ is when $n^{3/4}\ll d\ll n^{1-\varepsilon}$, but it might be reasonable to conjecture that the lower bound provided by Theorem \ref{thm:eigenvalues} gives the right answer. See Figure \ref{figure1} for an illustration of the relation between our lower bounds on $\disc^{+}(G)$ and $\lambda_2$, and the upper bounds.
	
	\bigskip
	\begin{figure}
		\begin{center}
			
			\begin{tikzpicture}
				\draw[->] (0,0) -- (0,5);
				\draw[->] (0,0) -- (10,0);
				\draw[fill] (8,0) circle (0.1); \node at (8,-1) {$d=n(\frac{1}{2}-\varepsilon)$}; \draw[->] (8,-0.7) -- (8,-0.2);
				\draw[fill] (8.3,0) circle (0.1);
				\draw[fill] (5.328,0) circle (0.1); \node at (5.328,-0.5) {$\frac{2}{3}$};
				\draw[fill] (6,0) circle (0.1); \node at (6,-0.5) {$\frac{3}{4}$};
				\draw[fill] (6.4,0) circle (0.1); \node at (6.4,-0.5) {$\frac{4}{5}$};
				\draw[fill] (0,4) circle (0.1); \node at (-0.5,4) {$\frac{1}{2}$};
				\draw[fill] (0,2.666) circle (0.1); \node at (-0.5,2.666) {$\frac{1}{3}$};
				\draw[dotted] (0,4) -- (8,4) ;
				\draw[dotted] (8,0) -- (8,4) ;
				\draw[dotted] (5.328,0) -- (5.328,2.664) ;
				\draw[dotted] (6,0) -- (6,2) ;
				\draw[dotted] (6.4,0) -- (6.4,1.6) ;
				\draw[dotted] (0,2.666) -- (8,2.666) ;
				\node at (-0.5,5) {$\beta$};
				\node at (10,-0.5) {$\alpha$};
				
				\node at (9.3,0.7) {$d=\frac{n}{2}$}; \draw[->] (9.25,0.5) -- (8.4,0.15);
				
				\draw[line width=1, red] (0,0)  -- (5.328,2.658) -- (6,2) ;
                \draw[line width=1, blue] (0,0.05)  -- (5.328,2.687) -- (6,2.05) ;
                \draw[line width=1, green] (0,0.1)  -- (5.328,2.71) -- (6,2.1) ;
				\draw[rounded corners,line width=1, red] (6,2) -- (6.4,1.6) -- (8,2) -- (8.3,0); 
				\draw[rounded corners,line width=1, blue] (6,2.05) -- (8,2.666) -- (8.3,0) ;
				\draw[rounded corners,line width=1, green] (6.666,2.7) -- (8,2.7) -- (8.3,0);
				
				\node at (10.3,2) {\color{red} $\frac{\disc^{+}(G)}{n}$ (Theorem \ref{thm:main})};
				\node at (10.3,2.666) {\color{blue} $\leq \lambda_2$  (Theorem \ref{thm:eigenvalues})};
				\node at (9,3.2) {\color{green} $\geq \lambda_2$};
			\end{tikzpicture}
			\label{figure1}
			\caption{A summary of our bounds. Writing $d=n^{\alpha}$, the red line denotes our lower bound $n^{1+\beta}=\disc^{+}(G)$ (Theorem \ref{thm:main}), the blue line denotes our lower bound $n^{\beta}=\lambda_2$ (Theorem \ref{thm:eigenvalues}), and the green line denotes the upper bound.}
		\end{center}
	\end{figure}
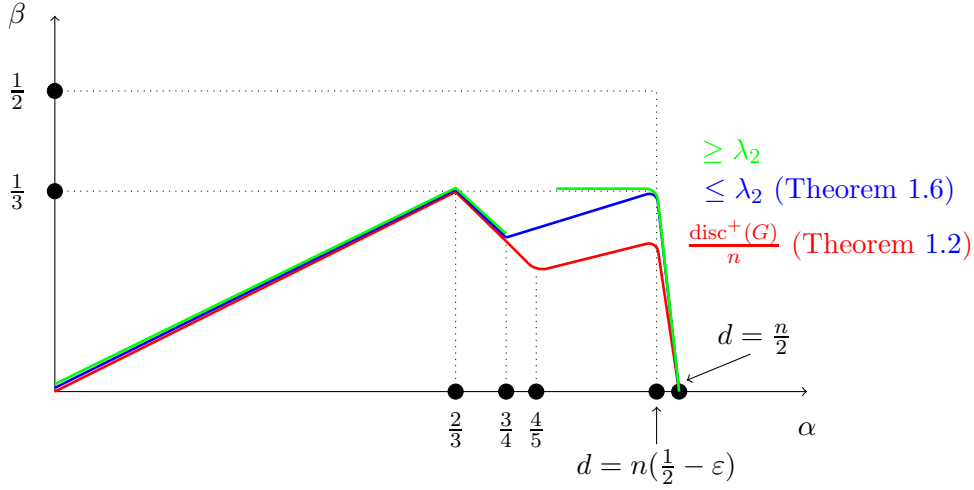
	
	\noindent
	\textbf{Organization.} In the next section, we introduce our notation, state some preliminary results, and establish the connection between discrepancy and surplus. Also, we present the proof of Proposition \ref{prop:upper_bound}. Then, in Section \ref{sect:sparse}, we prove the first two inequalities of Theorem \ref{thm:main}. In what follows, we prove the third inequality in a number of steps. First, in Section \ref{sect:semidefinite}, we show how to relax the positive discrepancy to a semidefinite optimization problem. In Section \ref{sect:easy}, we present lower bounds on the positive discrepancy using the eigenvalues of the adjacency matrix. The following section addresses the third inequality of Theorem \ref{thm:main}, and is the most difficult part of the paper. In Section \ref{sect:eigenvalue}, we prove Theorem \ref{thm:eigenvalues}. We finish our paper with some concluding remarks.  
	
	\section{Preliminaries}
	
	\subsection{Matrix notation}
	Let $\Sym(n)$ denote the set of symmetric, real valued $n\times n$ matrices. In this paper, we only work with symmetric matrices. We use $I=I_n$ to denote the identity matrix, and $J=J_n$ to denote the all-one matrix. For  $A\in\Sym(n)$, we denote by $A_{i,j}$ the corresponding entry of $A$. In case $v\in \mathbb{R}^{n}$ is a vector, we write $v(i)$ for the $i$-th coordinate. Given two matrices $A,B\in\Sym(n)$, their inner product is
	$$\langle A,B\rangle=\sum_{1\leq i,j\leq n}A_{i,j}B_{i,j}.$$
	Then, the \emph{Frobenius norm} of $A$ is defined as 
	$$||A||_{F}^2=\langle A,A\rangle=\sum_{i,j}A_{i,j}^2=\mbox{tr}(A^2).$$
	If $\lambda_1,\dots,\lambda_n$ are the eigenvalues of $A$, we have the identity $||A||_{F}^2=\sum_{i=1}^{n}\lambda_i^2$. We also recall the $\ell_2$-norm of $A$ as 
 $$||A||_2=\max_{v\in\mathbb{R}^n,||v||_2=1}||Av||_2=\lambda_1.$$  
	We denote by $A\succeq 0$ that $A$ is positive semidefinite, and write $A\succeq B$ if $A-B\succeq 0$. A simple observation that we repeateadly use is that if $A\succeq B$, then $A_{i,i}\geq B_{i,i}$ for every $i\in [n]$. 
	
	The \emph{Hadamard product} (or entry-wise product) of $A$ and $B$ is denoted by $A\circ B$, and it is the $n\times n$ matrix whose $(i,j)$ entry is $A_{i,j}\cdot B_{i,j}$. An important property of the Hadamard product, known as the Schur product theorem, is that it preserves positive semidefiniteness.
	
	\begin{lemma}[Schur product theorem]\label{lemma:hadamard}
		If $A,B\succeq 0$, then $A\circ B\succeq 0$. 
	\end{lemma}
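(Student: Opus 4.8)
The statement to prove is the Schur product theorem: if $A,B\succeq 0$ then $A\circ B\succeq 0$.

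\bigskip

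The plan is to exploit the spectral decomposition of one of the two matrices and reduce the claim to the fact that the Hadamard product of two rank-one positive semidefinite matrices is again positive semidefinite. First I would write $B$ in terms of its eigenvalue decomposition $B=\sum_{k=1}^{n}\mu_k u_k u_k^{\top}$, where the $\mu_k\geq 0$ since $B\succeq 0$ and the $u_k\in\mathbb{R}^n$ are orthonormal eigenvectors. By bilinearity of the Hadamard product this gives
$$A\circ B=\sum_{k=1}^{n}\mu_k\,\bigl(A\circ (u_k u_k^{\top})\bigr),$$
so it suffices to show each summand is positive semidefinite; since the $\mu_k$ are nonnegative, a nonnegative combination of positive semidefinite matrices is positive semidefinite.

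\bigskip

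The key step is therefore to analyze $A\circ (uu^{\top})$ for a single vector $u$. Writing $D=\mathrm{diag}(u)$ for the diagonal matrix with entries $u(i)$, I would observe that the $(i,j)$ entry of $A\circ(uu^{\top})$ equals $A_{i,j}\,u(i)u(j)$, which is exactly the $(i,j)$ entry of $DAD$ (noting $D^{\top}=D$). Hence $A\circ(uu^{\top})=DAD=DAD^{\top}$. Since $A\succeq 0$, for any $x\in\mathbb{R}^n$ we have $x^{\top}(DAD^{\top})x=(D^{\top}x)^{\top}A(D^{\top}x)\geq 0$, so $DAD^{\top}\succeq 0$. This establishes that each rank-one Hadamard factor is positive semidefinite.

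\bigskip

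Combining these, $A\circ B=\sum_k \mu_k\,DAD$ with each term positive semidefinite and each coefficient $\mu_k\geq 0$, so the sum is positive semidefinite, proving $A\circ B\succeq 0$. I do not expect a genuine obstacle here; the only point requiring a little care is the bookkeeping of the identity $A\circ(uu^{\top})=DAD$, which is a direct entrywise verification. An alternative route, if one prefers to avoid diagonalization, is the probabilistic/Gram-matrix argument: represent $A_{i,j}=\langle a_i,a_j\rangle$ and $B_{i,j}=\langle b_i,b_j\rangle$ as Gram matrices, so that $A_{i,j}B_{i,j}=\langle a_i\otimes b_i,\,a_j\otimes b_j\rangle$ is itself a Gram matrix and hence positive semidefinite; this is essentially the same idea phrased via tensor products rather than eigenvectors.
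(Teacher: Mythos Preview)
Your proof is correct. The paper does not actually prove this lemma; it is stated as the classical Schur product theorem and used as a black box throughout. Your argument via the spectral decomposition of $B$ and the identity $A\circ(uu^{\top})=DAD$ with $D=\mathrm{diag}(u)$ is one of the standard proofs, and the alternative Gram-matrix/tensor-product route you mention is equally valid. The only cosmetic slip is in your final summary line, where you write $A\circ B=\sum_k \mu_k\,DAD$; of course each term has its own $D_k=\mathrm{diag}(u_k)$, as you made clear earlier in the argument.
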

	
	Finally, as promised in Section \ref{sect:intr_eigen}, we prove the following inequality between the second and last eigenvalue of a regular graph.
	
	\begin{prop}\label{prop:product}
		Let $G$ be a $d$-regular graph, $d\leq n/2$. Let $\lambda_2$ and $\lambda_n$ be the second largest and smallest eigenvalue of the adjacency matrix, respectively. Then
		$$(1+\lambda_2)\cdot |\lambda_n|\geq \frac{d}{4}.$$
	\end{prop}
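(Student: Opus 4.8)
The plan is to extract the inequality from just the first two moments of the eigenvalue distribution of the adjacency matrix $A$ of $G$, both of which are pinned down by regularity. Writing $d=\lambda_1\ge\lambda_2\ge\cdots\ge\lambda_n$ for the eigenvalues, the trace of $A$ gives $\sum_{i=1}^n\lambda_i=0$, hence $\sum_{i\ge 2}\lambda_i=-d$; and since $(A^2)_{ii}$ counts closed walks of length two, $\operatorname{tr}(A^2)=nd$, so $\sum_{i\ge 2}\lambda_i^2=nd-d^2=d(n-d)$. These are the only facts about $G$ I intend to use.

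The key step is to bound the second moment by the first using the two extreme eigenvalues. For every $i\ge 2$ we have $\lambda_n\le\lambda_i\le\lambda_2$, so $(\lambda_i-\lambda_2)(\lambda_i-\lambda_n)\le 0$. Summing this over $i=2,\dots,n$ and expanding gives
$$\sum_{i\ge 2}\lambda_i^2-(\lambda_2+\lambda_n)\sum_{i\ge 2}\lambda_i+(n-1)\lambda_2\lambda_n\le 0.$$
Substituting the two moment identities and writing $b=-\lambda_n=|\lambda_n|\ge 0$, this rearranges to
$$b\bigl((n-1)\lambda_2+d\bigr)\ \ge\ d\,(n-d+\lambda_2).$$

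To finish I would first record that $\lambda_2\ge 0$ throughout the regime $d\le n/2$: since $d<n-1$ there is a non-adjacent pair $u,v$, and the principal $2\times 2$ submatrix of $A$ indexed by $\{u,v\}$ is the zero matrix, so Cauchy interlacing yields $\lambda_2(A)\ge\lambda_2(0_{2\times 2})=0$. Consequently the bracket $(n-1)\lambda_2+d$ is positive and I may divide, giving a lower bound on $b$. It then remains to check the elementary inequality $(1+\lambda_2)\,b\ge d/4$, i.e.\ $4(1+\lambda_2)(n-d+\lambda_2)\ge (n-1)\lambda_2+d$; expanding, the difference of the two sides equals $4n-5d+\lambda_2(3n-4d+5)+4\lambda_2^2$, and each term is nonnegative once $d\le n/2$ and $\lambda_2\ge 0$ (indeed $4n-5d\ge\frac32 n$ and $3n-4d+5>0$), so the inequality holds with room to spare.

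I expect the only genuinely delicate point to be the bookkeeping of signs: the argument hinges on $\lambda_2\ge 0$, which can fail outside the hypothesis $d\le n/2$ (for instance $\lambda_2=-1$ for $K_n$), so the degree bound must be used exactly where indicated. The moment computation and the closing algebra are routine; the one idea that drives the proof is the choice of the quadratic $(\lambda_i-\lambda_2)(\lambda_i-\lambda_n)$, which simultaneously encodes both extreme eigenvalues and collapses the two known moments into the desired product $(1+\lambda_2)|\lambda_n|$.
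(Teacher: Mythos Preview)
Your proof is correct and takes a genuinely different route from the paper's. The paper argues by a case split: either both $\lambda_2$ and $|\lambda_n|$ exceed $\sqrt{d}/2$ (and the product inequality is immediate), or one of them---say $\lambda_2$---is small, in which case the positive eigenvalues contribute at most $nd/4$ to $\sum_{i\ge 2}\lambda_i^2$, so the negative ones contribute at least $nd/4$, and this quantity is then bounded above by $|\lambda_n|\sum_{i>K}|\lambda_i|=|\lambda_n|(d+\sum_{2\le i\le K}\lambda_i)\le |\lambda_n|\,n(1+\lambda_2)$. Your argument avoids the case split entirely: the single inequality $(\lambda_i-\lambda_2)(\lambda_i-\lambda_n)\le 0$ summed over $i\ge 2$ is a Bhatia--Davis type bound that packages both extremes at once and converts the two moment identities directly into the desired product bound. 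This is cleaner and arguably more conceptual; the paper's approach, on the other hand, is slightly more ``hands-on'' and makes visible where the slack is (namely, in the threshold $\sqrt{d}/2$). Both proofs rely on the same auxiliary fact $\lambda_2\ge 0$ via Cauchy interlacing with an independent pair, and both share the same trivial edge case at $n=2$ (where $K_2$ satisfies $d\le n/2$ but has $\lambda_2=-1$), which is irrelevant for the paper's purposes.
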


	\begin{proof}
		Let $A$ be the adjacency matrix of $G$, and let $d=\lambda_1\geq \lambda_2\geq\dots\geq \lambda_n$ be the eigenvalues of $A$. 
 Since $G$ is not a complete graph its independence number is at least $2$, so by interlacing $\lambda_2\geq 0$. Let $k$ be the largest index such that $\lambda_{k}\geq 0$. As $\mbox{tr}(A)=0$ and $\mbox{tr}(A^2)=nd$, we get  
		$$d+\sum_{i=2}^{k}\lambda_i=\sum_{k+1}^{n}|\lambda_i|,$$
		and
		$$\sum_{i=2}^{n}\lambda_i^2=nd-d^2\geq \frac{nd}{2}.$$
		
		If $\lambda_2\geq \sqrt{d}/2$ and $|\lambda_n|\geq \sqrt{d}/2$, we are done. Hence, we  may assume that $\lambda_2\leq \sqrt{d}/2$, the other case can be handled similarly. Then $\sum_{i=2}^{k}\lambda_i^2\leq \frac{nd}{4}$, so 
		$$\frac{nd}{4}\leq \sum_{i=k+1}^{n}\lambda_i^2\leq |\lambda_n|\sum_{i=k+1}^{n}|\lambda_i|=|\lambda_n|\left(d+\sum_{i=2}^{k}\lambda_i\right)\leq |\lambda_n|(d+n\lambda_2) \leq 
  |\lambda_n|n(1+\lambda_2).$$

	\end{proof}
	
	\subsection{Discrepancy}
	In this section, we collect basic definitions and results about graphs and discrepancy, and prove Proposition \ref{prop:upper_bound}.

	Let $G$ be a graph with $n=v(G)$ vertices, $m=e(G)$ edges, density $p=m/\binom{n}{2}$, and average degree $d=2m/n$. Given $U\subset V(G)$, we write $U^{c}=V(G)\setminus U$, $e(U)$ is the number of edges in the induced subgraph $G[U]$, and 
	$$\disc_G(U)=\disc(U)=e(U)-p\binom{|U|}{2}.$$
	Then, $$\disc^{+}(G)=\max_{U\subset V(G)}\disc(U).$$
 Furthermore, if $U$ and $W$ are disjoint subsets of $V(G)$, write $e(U,W)$ for the number of edges between $U$ and $W$, and let
	$$\disc(U,W)=e(U,W)-p|U||W|.$$
	Observe that $\disc(U\cup W)=\disc(U)+\disc(U,W)+\disc(W)$, and in particular,
	\begin{equation}\label{equ:zero_disc}
		0=\disc(U)+\disc(U,U^c)+\disc(U^c).
	\end{equation}
	
	\begin{claim}\label{claim:complement}
		If $G$ is regular, then $\disc(U)=\disc(U^c)=-\frac{1}{2}\disc(U,U^c)$.
	\end{claim}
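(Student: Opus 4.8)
The plan is to prove the two equalities in turn, deducing the second from the first together with the already-established identity \eqref{equ:zero_disc}. The heart of the matter is the symmetry $\disc(U)=\disc(U^c)$, and the natural way to get it is to exploit $d$-regularity through a double-counting of edge endpoints. Writing $k=|U|$, so $|U^c|=n-k$, I would sum degrees over $U$: since every vertex has degree $d$, counting each internal edge twice and each crossing edge once gives $dk=2e(U)+e(U,U^c)$, and symmetrically $d(n-k)=2e(U^c)+e(U,U^c)$. Subtracting these eliminates the crossing term and yields
$$e(U)-e(U^c)=\frac{d(2k-n)}{2}.$$

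Next I would handle the expected-size terms. The point is that the ``density'' contribution to $\disc(U)-\disc(U^c)$ exactly cancels the edge difference above. Here one uses $p=\frac{m}{\binom{n}{2}}=\frac{d}{n-1}$ for a $d$-regular graph, together with the elementary identity $\binom{k}{2}-\binom{n-k}{2}=\frac{(n-1)(2k-n)}{2}$, so that
$$p\left(\binom{k}{2}-\binom{n-k}{2}\right)=\frac{d(2k-n)}{2}.$$
Combining the two displays gives $\disc(U)-\disc(U^c)=\bigl(e(U)-e(U^c)\bigr)-p\bigl(\binom{k}{2}-\binom{n-k}{2}\bigr)=0$, which is the first claimed equality.

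Finally, for the third expression I would simply invoke \eqref{equ:zero_disc}, namely $0=\disc(U)+\disc(U,U^c)+\disc(U^c)$. Substituting $\disc(U^c)=\disc(U)$ collapses this to $0=2\disc(U)+\disc(U,U^c)$, i.e. $\disc(U)=-\tfrac12\disc(U,U^c)$, completing the chain of equalities.

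I do not anticipate a genuine obstacle here, as the argument is essentially bookkeeping; the only place to be careful is the arithmetic of the binomial difference and the substitution $p=d/(n-1)$, since it is precisely the cancellation of the factor $(n-1)$ against the density that makes the two $\tfrac{d(2k-n)}{2}$ terms match. The conceptual content is that regularity forces the edge imbalance between $U$ and $U^c$ to coincide exactly with the imbalance in their expected edge counts, which is what makes the positive discrepancy symmetric under complementation.
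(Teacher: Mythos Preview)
Your proof is correct and uses essentially the same ingredients as the paper: the regularity identity $d|U|=2e(U)+e(U,U^c)$ together with \eqref{equ:zero_disc}. The only cosmetic difference is the order: the paper first shows $\disc(U)=-\tfrac12\disc(U,U^c)$ directly (via the same double-count) and then deduces $\disc(U)=\disc(U^c)$ from \eqref{equ:zero_disc}, whereas you do the reverse.
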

	
	\begin{proof}
		It is enough to prove that $\disc(U)=-\frac{1}{2}\disc(U,U^c)$, then $\disc(U)=\disc(U^c)$ follows from (\ref{equ:zero_disc}). But this is true as $p=\frac{d}{n-1}$ and 
		\begin{align*}
			\disc(U,U^c)&=e(U,U^c)-p|U||U^c|=
			|U|d-2e(U)-p|U|(n-|U|)
			=|U|d-2e(U)+p|U|^2-p|U|n\\
			&=|U|d-2\disc(U)+p|U|-p|U|n
			=-2\disc(U).
		\end{align*}
	\end{proof}
	
	The next lemma is a useful technical tool to help us deal with high degree vertices. We will use this to show that either vertices of degree at least $(1+\delta)d$ already cause large positive discrepancy, or we can remove every edge attached to such a high degree vertex without effecting the discrepancies by much.
	
	\begin{lemma}\label{lemma:maxdeg}
		Let $\delta\in (0,1)$, then there exists $c_1,c_2>0$ such that the following holds. Let $X\subset V(G)$ such that the degree of every vertex in $X$ is at least $(1+\delta)d$. Then 
		$$\disc^{+}(G)\geq c_1 (e(X)+e(X,X^c))-c_2 n.$$
	\end{lemma}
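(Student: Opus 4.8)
The plan is to bound $\disc^{+}(G)$ from below by a fractional relaxation and then feed it a cleverly weighted indicator of $X$. First I would record the following averaging principle: if we build a random set $U\subseteq V(G)$ by including each vertex $v$ independently with probability $x_v\in[0,1]$, then $\mathbb{E}[\disc(U)]=\sum_{i<j}(A_{i,j}-p)x_ix_j$, and since $\disc^{+}(G)$ is the maximum of $\disc(U)$ over all sets, it is at least this expectation. Hence for \emph{every} $x\in[0,1]^n$ we have $\disc^{+}(G)\ge \sum_{i<j}(A_{i,j}-p)x_ix_j$ (equivalently $\tfrac12 x^{T}(A-p(J-I))x$), which is the natural vector/SDP-style relaxation. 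The integral choice $U=X$ is not enough on its own, because the high-degree vertices of $X$ may send almost all their edges into $X^{c}$, so that $\disc(X)$ is negative; the purpose of the relaxation is to also harvest the crossing edges $e(X,X^{c})$. Accordingly I would take $x_v=1$ for $v\in X$ and $x_v=t$ for $v\notin X$, where $t\in[0,1]$ is a parameter to be optimised.

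With this choice the relaxation reads $\disc^{+}(G)\ge \disc(X)+t\,\disc(X,X^{c})+t^{2}\disc(X^{c})$. Using the identity (\ref{equ:zero_disc}), namely $\disc(X)+\disc(X,X^{c})+\disc(X^{c})=0$, I would eliminate $\disc(X^{c})$ and rewrite the right-hand side as
\[
(1-t)\left[(1+t)\disc(X)+t\,\disc(X,X^{c})\right]=t(1-t)\bigl(\Sigma-d|X|\bigr)+(1-t)^{2}\disc(X),
\]
where $\Sigma=\sum_{v\in X}\deg(v)$. The degree hypothesis enters through the clean identity $2\disc(X)+\disc(X,X^{c})=\Sigma-d|X|$, which holds because $p(n-1)=d$; it gives $\Sigma-d|X|\ge \delta d|X|>0$. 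The only possibly negative contribution is $\disc(X)$, and here the trivial estimate $\disc(X)\ge -p\binom{|X|}{2}\ge -\tfrac12 d|X|$ (again using $p(n-1)=d$) suffices.

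It then remains to optimise the one-variable quadratic $t(1-t)(\Sigma-d|X|)+(1-t)^{2}\disc(X)$. Writing $s=1-t$ and using $\Sigma-d|X|\ge \gamma\Sigma$ with $\gamma:=\tfrac{\delta}{1+\delta}$ (from $d|X|\le \Sigma/(1+\delta)$), together with $\disc(X)\ge-\tfrac12 d|X|\ge-\tfrac12\Sigma$, the expression is at least $\Sigma(\gamma s-s^{2})$; taking $s=\gamma/2\in(0,1)$ makes it at least $\tfrac{\gamma^{2}}{4}\Sigma$. Finally, since $\Sigma=2e(X)+e(X,X^{c})\ge e(X)+e(X,X^{c})$, this yields $\disc^{+}(G)\ge c_1\bigl(e(X)+e(X,X^{c})\bigr)$ with $c_1=\tfrac{\delta^{2}}{4(1+\delta)^{2}}>0$, which is in fact stronger than the stated inequality (no additive term is lost, so any $c_2>0$ works). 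I expect the only genuine obstacle to be conceptual rather than computational: one must realise that the right object is the fractional set $x=\mathbf{1}_X+t\,\mathbf{1}_{X^{c}}$, and that the degree condition should be applied in the combined form $2\disc(X)+\disc(X,X^{c})$, which is precisely the linear combination collapsing to the degree surplus $\Sigma-d|X|$. Once this is set up, the remaining steps are a routine optimisation over $t$.
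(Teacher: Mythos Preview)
Your argument is correct and follows essentially the same approach as the paper: both proofs bound $\disc^{+}(G)$ from below by the expected discrepancy of $X$ together with a random portion of $X^{c}$, rewrite the result as $(1-t^{2})\disc(X)+(t-t^{2})\disc(X,X^{c})$, and then exploit the identity $2\disc(X)+\disc(X,X^{c})=\Sigma-d|X|$ with $\Sigma\ge(1+\delta)d|X|$. The only difference is that the paper samples a \emph{fixed-size} random subset $Y\subset X^{c}$ of size $\lfloor \alpha|X^{c}|\rfloor$, which introduces $O(n)$ rounding errors and accounts for the $-c_{2}n$ term, whereas your independent sampling avoids this entirely and yields the slightly stronger conclusion $\disc^{+}(G)\ge \tfrac{\delta^{2}}{4(1+\delta)^{2}}\bigl(e(X)+e(X,X^{c})\bigr)$.
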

	
	\begin{proof}
		In the proof, the $O(.)$ and $\Omega(.)$ notations hide constants which only depend on $\delta$. 
		
		Let $s=|X^c|$, and let $\alpha\in (0,1)$ be specified later, depending only on $\delta$. Furthermore, let $b=\lfloor \alpha s\rfloor$, and let $Y$ be a random $b$ element subset of $X^c$, chosen from the uniform distribution. We have
		$$\disc(X\cup Y)=\disc(X)+\disc(X,Y)+\disc(Y).$$
		Here, $\mathbb{E}(\disc(X,Y))=\frac{b}{s}\disc(X,X^c)=\alpha\disc(X,X^c)+O(n)$, and $\mathbb{E}(\disc(Y))=\frac{b(b-1)}{s(s-1)}\disc(X^c)=\alpha^2\disc(X^c)+O(n)$. We get
		\begin{align*}
			\mathbb{E}(\disc(X\cup Y))&=\disc(X)+\alpha\disc(X,X^c)+\alpha^2\disc(X^c)+O(n)\\
			&=(1-\alpha^2)\disc(X)+(\alpha-\alpha^2)\disc(X,X^c)+O(n),
		\end{align*}
		where we used (\ref{equ:zero_disc}) in the second equality. Here, the right hand side can be  written as
		\begin{align}\label{equ:cc}\begin{split}
				&(1-\alpha^2)e(X)+(\alpha-\alpha^2)e(X,X^c)-p|X|\left(\frac{1-\alpha^2}{2}|X|+(\alpha-\alpha^2)(n-|X|)\right)+O(n)\\
				&\geq (\alpha-\alpha^2)(2e(X)+e(X,X^c)-d|X|)-\frac{p(1-\alpha)^2}{2}|X|^2+O(n).\end{split}
		\end{align}
		
		In the inequality, we used that $1-\alpha^2\geq 2(\alpha-\alpha^2)$ holds for every $\alpha\in [0,1]$. Write $f=2e(X)+e(X,X^c)$, and observe that $f$ is the sum of the degrees of the vertices in $X$. Therefore, $f\geq (1+\delta)d|X|$, which implies that $f-d|X|\geq \frac{\delta}{2}d|X|+\frac{\delta}{4}f$. Hence, the second line of (\ref{equ:cc}) can be lower bounded by
		$$\frac{(\alpha-\alpha^2)\delta}{4}f+(1-\alpha)\left(\frac{\alpha \delta d}{2}|X|-\frac{(1-\alpha)d}{2n}|X|^2\right)+O(n).$$
		Set $\alpha=\frac{1}
		{1+\delta}$, then the second term is nonnegative. Therefore, we get
		$$\mathbb{E}(\disc(X\cup Y))\geq \Omega(f)+O(n).$$
		But then by choosing $Y$ maximizing $\disc(X\cup Y)$, we showed that $\disc^{+}(G)=\Omega(f)+O(n)$. Noting that $f\geq e(X)+e(X,X^c)$, we are done.
	\end{proof}

   Next, we show that the positive discrepancy is indeed controlled by the second eigenvalue in the case of regular graphs.
 
	\begin{lemma}\label{lemma:lambda_2_disc}
		Let $G$ be a $d$-regular graph on $n$ vertices and let $\lambda_2$ be the second largest eigenvalue of the adjacency matrix of $G$. Then for every $U\subset V(G)$, $$\disc(U)\leq  \frac{\lambda_2}{2}|U|+d.$$ In particular, $$\disc^{+}(G)\leq \frac{\lambda_2}{2}n+d.$$
	\end{lemma}
	
	\begin{proof}
		Let $A$ be the adjacency matrix of $G$, $d=\lambda_1\geq\dots\geq\lambda_n$ be the eigenvalues of $A$, and let $v_1=\frac{1}{\sqrt{n}} \mathds{1}, v_2\dots,v_n$ be the corresponding eigenvalues (where $\mathds{1}$ is the all one vector). Also, let $\mathbf{x}$ be the characteristic vector of $U$. Then
  $||x||_2^2=\langle \mathbf{x},\mathds{1}\rangle=|U|$. Recalling that $p=\frac{d}{n-1}$, we have
		\begin{align}
			\disc(U)&=\frac{1}{2}\mathbf{x}^{T}A\mathbf{x}-p\frac{|U|(|U|-1)}{2}=
   \frac{1}{2}\mathbf{x}^{T}A\mathbf{x}-\frac{d}{2(n-1)}(\langle \mathbf{x},\mathds{1}\rangle^2-||x||_2^2)\\
			&=\frac{1}{2}\sum_{i=1}^{n}\lambda_i \langle x,v_i\rangle^2-\frac{d}{2n}\langle \mathbf{x},\mathds{1}\rangle^2-\frac{d}{2n(n-1)}\langle \mathbf{x},\mathds{1}\rangle^2+\frac{||x||_2^2 d}{2(n-1)}\\
			&\leq \frac{1}{2}\sum_{i=2}^{n}\lambda_2 \langle x,v_i\rangle^2+d\leq \frac{\lambda_2}{2}||x||_2^2+d= \frac{\lambda_2}{2}|U|+d.
		\end{align}
	
	\end{proof}

    We point out that the positive discrepancy may be much smaller than $\lambda_2 n$. Indeed, if $G$ is the disjoint union of the complete graph on $d+1$ vertices, and a random $d$-regular graph on $n\geq d^2$ vertices, then $\lambda_2=d$, but $\disc^{+}(G)=\Theta(d^{1/2}n)$. In general, $\disc^{+}(G)$ is a global quantity of $G$, while $\lambda_2$ may be influenced by small subgraphs as well.
    
    Finally, let us prove our upper bound on the positive discrepancy, that is, Proposition \ref{prop:upper_bound}.

    \begin{proof}[Proof of Proposition \ref{prop:upper_bound}]
    For a prime power $q$, let $Q=Q(q)$ be the strongly-regular graph defined in the introduction. That is, $Q$ has $N:=\frac{1}{2}q^2(q^2-1)$ vertices, it is $D=(q-1)(q^2+1)$-regular, and has second largest eigenvalue $\lambda_2=q-1$. Assuming $q$ is sufficiently large, we have $\lambda_2\leq 2N/D$ and $1.5N^{3/4}\leq D\leq 2N^{3/4}$. Write $p=D/(N-1)$ for the density of $Q$.
    
    Assume that $\varepsilon\leq 0.1$ and that $n$ is sufficiently large with respect to $\varepsilon$. We can choose a prime $q$ such that $N/D\in [(1-\varepsilon)\frac{n}{d},(1+\varepsilon)\frac{n}{d}]$  (see \cite{primes}). Then, using that $d\in [n^{2/3},n^{3/4}]$, we can write 
    $$\frac{N^{1/4}}{2}\leq \frac{N}{D}\leq 1.1\frac{n}{d}\leq 1.1n^{1/3},$$ which implies that $N\leq 25n^{4/3}$ and $D\leq 25n$. Also, $$0.7N^{1/4}\geq \frac{N}{D}\geq 0.9\frac{n}{d}\geq0.9n^{1/4},$$ which implies that $n\leq N$.
    
    Let $G$ be an arbitrary induced subgraph of $Q$ on $n$ vertices with density at least $p$. As the expected value of density of $n$ vertex induced subgraphs is $p$, there exists such a subgraph. The average degree of $G$ is at least $p(n-1)\geq (1-\varepsilon)d$. Also, $e(G)\leq p\binom{n}{2}+O(\lambda_2 n)\leq (1+\varepsilon/2)p\binom{n}{2}$, so the average degree is at most $(1+\varepsilon)d$. Finally, by Lemma \ref{lemma:lambda_2_disc}, for every $U\subset V(G)$, one has 
    $$\disc_G(U)\leq \disc_Q(U)\leq \frac{\lambda_2}{2} |U|+D\leq\frac{N}{D}\cdot n+25n\leq \frac{3n^2}{d}.$$
    Therefore, $\disc^{+}(G)\leq \frac{3n^2}{d}.$
    \end{proof}
	
	\subsection{Surplus}
	
	Let $G$ be a graph with $n$ vertices and $m$ edges. The \emph{surplus} of $G$, denoted by $\surplus(G)$, is the maximum $s$ such that $V(G)$ has a partition into two parts with $m/2+s$ edges between the parts. An \emph{equipartition} of $G$ is a partition of the vertices into parts of size $\lfloor n/2\rfloor$ and $\lceil n/2\rceil$. The \emph{negative surplus}, denoted by $\dfc(G)$, is the maximum $s$ such that $G$ has an equipartition with $m/2-s$ edges between the parts.
	
	In what follows, we prove that $\disc^{+}(G)=\Theta(\dfc(G))$ holds for regular graphs. Note that this does not necessarily remain true without the assumption of regularity: if $G$ is the union of a clique of size $n/2$  and an independent set of size $n/2$, with all edges added between the two sets, then $\disc^{+}(G)=\Omega(n^2)$ and $\dfc(G)=O(n)$.
	
	\begin{lemma}\label{lemma:surplus}
		Let $G$ be a $d$-regular graph on $n$ vertices, where $n\geq 100$. Then 
		\begin{itemize}
			\item $\frac{1}{2}\cdot \dfc(G)\leq \disc^{+}(G)\leq 3\cdot \dfc(G),$
			\item $\frac{1}{2}\cdot \surplus(G)\leq \disc^{-}(G)\leq 3\cdot \surplus(G).$
		\end{itemize}
	\end{lemma}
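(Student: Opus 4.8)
The plan is to translate the whole statement into the language of cuts via Claim~\ref{claim:complement}, and then to move between an optimal (possibly unbalanced) vertex subset and a balanced partition by the same random rounding used in Lemma~\ref{lemma:maxdeg}. The key dictionary is that, for a regular graph and any $U$, Claim~\ref{claim:complement} gives $\disc(U)=\frac12\big(p|U|\,|U^c|-e(U,U^c)\big)$, so $\disc(U)$ is large precisely when the cut $(U,U^c)$ is small and $-\disc(U)$ is large precisely when the cut is large. I also record the quantity $\gamma:=p\lfloor n/2\rfloor\lceil n/2\rceil-\frac m2$; a one-line computation shows $0\le\gamma=\Theta(d)$, and $\gamma$ measures how much the expected size of a random balanced cut exceeds $m/2$.

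For the two lower bounds I would just insert the extremal partition into this identity. If $A$ is a minimum bisection, then its deficit is $\frac m2-e(A,A^c)=\dfc(G)$, and since $|A|\,|A^c|=\lfloor n/2\rfloor\lceil n/2\rceil$ forces $p|A|\,|A^c|\ge\frac m2$, the identity yields $\disc^+(G)\ge\disc(A)\ge\frac12\dfc(G)$. The bound $\frac12\surplus(G)\le\disc^-(G)$ is symmetric, now taking $A$ a maximum cut and using $p|A|\,|A^c|\le\frac m2+\gamma$.

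The substance is in the upper bounds, where the extremal set need not be balanced. Let $W$ attain $\disc^+(G)=\disc(W)$; replacing $W$ by $W^c$ (legitimate as $\disc(W)=\disc(W^c)$) I may assume $|W|=k\le n/2$. I then round $W$ to a balanced set $A=W\cup Y$ with $Y$ a uniformly random $(\lfloor n/2\rfloor-k)$-subset of $W^c$. Expanding $\disc(A)=\disc(W)+\disc(W,Y)+\disc(Y)$ and taking expectations exactly as in Lemma~\ref{lemma:maxdeg}, while rewriting $\disc(W,W^c)=-2\disc(W)$ and $\disc(W^c)=\disc(W)$ through Claim~\ref{claim:complement}, gives
$$\mathbb{E}[\disc(A)]=\left(1-\frac{2b}{s}+\frac{b(b-1)}{s(s-1)}\right)\disc(W),\quad b=\lfloor n/2\rfloor-k,\ s=n-k.$$
The bracket equals $\big(\lceil n/2\rceil/(n-k)\big)^2$ up to an additive $O(1/n)$, hence is at least $\frac14-O(1/n)$, where $n\ge100$ absorbs the error. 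Choosing a balanced $A$ attaining at least the expectation and translating back through $\frac m2-e(A,A^c)=2\disc(A)-\gamma$ gives $\dfc(G)\ge(\frac12-O(1/n))\disc^+(G)-\gamma$, which, once the lower-order $\gamma$ is absorbed, rearranges to $\disc^+(G)\le 3\dfc(G)$. The bound $\disc^-(G)\le3\surplus(G)$ is obtained in the same way from a maximizer of $-\disc$, and is in fact cleaner, since there the sign of the correction $\gamma$ works in our favour.

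I expect the rounding step to be the main obstacle. Balancing an optimal set costs a constant factor — the bracket above degrades from $1$ toward $\frac14$ as $k\to0$ — and this factor, combined with the factor $2$ relating $\disc$ to a cut, is exactly what forces the constant $3$. The genuinely delicate bookkeeping is the correction term $\gamma=\Theta(d)$, which appears because the chosen baseline $m/2$ differs from the expected balanced cut $p\lfloor n/2\rfloor\lceil n/2\rceil$; one must verify that $\gamma$ is dominated by the relevant discrepancy (this is where the hypothesis $n\ge100$, and the favourable sign of $\gamma$ in the surplus case, enter).
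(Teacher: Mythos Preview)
Your approach is essentially identical to the paper's: both prove the lower bounds by plugging the extremal bisection into Claim~\ref{claim:complement}, and both prove the upper bounds by taking the extremizing set $W$, padding it with a uniformly random subset of $W^c$ to reach size $\lfloor n/2\rfloor$, and computing $\mathbb{E}[\disc(A)]$ via the same bracket $1-\tfrac{2b}{s}+\tfrac{b(b-1)}{s(s-1)}$. The paper bounds this bracket below by $\tfrac15$ (your $\tfrac14-O(1/n)$ after absorbing the error for $n\ge100$) and then translates back to $\dfc$ exactly as you do. The only difference is that you explicitly track the correction $\gamma=p\lfloor n/2\rfloor\lceil n/2\rceil-\tfrac m2=\Theta(d)$, whereas the paper silently replaces $p|Y|\,|Y^c|$ by $\tfrac{dn}{4}$; your bookkeeping is in fact more honest, and your observation that the sign of $\gamma$ is favourable in the surplus case is correct.
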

	
	\begin{proof}
		We only prove the first bulletpoint, the second follows in a similar manner. Also, for simplicity, we assume that $n$ is even. 
		
		Let $p=e(G)/\binom{n}{2}=\frac{d}{n-1}$. Let $U\subset V(G)$  such that $|U|=n/2$, and  achieving the equality  $e(U,U^c)=\frac{dn}{4}-\dfc(G)$. Then  
		$$\disc^{+}(G)\geq \disc(U)=-\frac{1}{2}\disc(U,U^c)=\dfc(G)/2,$$ where the first equality follows by Claim \ref{claim:complement}.
		
		Now let us turn to the upper bound, and let $U\subset V(G)$ such that $\disc^{+}(G)=\disc(U)$. As $\disc(U)=\disc(U^c)$, we may assume without loss of generality that $|U|\leq n/2\leq |U^c|$. Write $u:=|U|$, let $Z$ be a random $z:=n/2-u$ element subset of $U^c$, and set $Y=U\cup Z$.  We have
		$$\disc(Y)=\disc(U)+\disc(U,Z)+\disc(Z).$$
		Here, 
		\begin{align*}
			\mathbb{E}(\disc(U,Z))&=e(U,U^c)\cdot \frac{z}{n-u}-p\cdot uz
			=\frac{z}{n-u}(e(U,U^c)-p\cdot u(n-u))\\
			&=\frac{z}{n-u}\disc(U,U^c)=-2\frac{z}{n-u}\disc(U).
		\end{align*}
		and similarly 
		$$\mathbb{E}(\disc(Z))=e(U^c)\cdot\frac{z(z-1)}{(n-u)(n-u-1)}-p\binom{z(z-1)}{2}=\frac{z(z-1)}{(n-u)(n-u-1)}\disc(U^c).$$
		Using that $\disc(U^c)=\disc(U)$, we have
		\begin{align*}
			\mathbb{E}(\disc(Y))&=\disc(U)\left[1-\frac{2z}{n-u}+\frac{z(z-1)}{(n-u)(n-u-1)}\right]\\
			&=\disc(U)\left[\left(1-\frac{z}{n-u}\right)^2-\frac{nz}{2(n-u)^2(n-u-1)}\right]\geq \frac{1}{5}\disc(U).
		\end{align*}
		where the last inequality holds by noting that $\frac{z}{n-u}=\frac{n/2-u}{n-u}\leq \frac{1}{2}$,  and $\frac{nz}{2(n-u)^2(n-u-1)}<0.1$ by our assumption $n\geq 100$. This means that there is a choice for $Z$ such that $\disc(Y)\geq \frac{1}{5}\disc(U)$, fix such a choice. 
		
		As $|Y|=n/2$, we have
		$$\frac{nd}{4}-\dfc(G)\leq e(Y,Y^c)=\disc(Y,Y^c)+\frac{dn}{4}=-2\disc(Y)+\frac{dn}{4}$$
		from which we deduce that $\dfc(G)\geq 2\disc(Y)\geq \frac{1}{3}\disc(U).$
		
	\end{proof}

 \section{Positive discrepancy in the sparse regime}\label{sect:sparse}

 In this section, we prove the first two inequalities of Theorem \ref{thm:main}. The proof is partially inspired by the semidefinite programming method used in recent results on the MaxCut \cite{BJS,Max-Cut-SDP,GJS}. However, this is somewhat different from the semidefinite approach we introduce in Section \ref{sect:semidefinite} to tackle the third inequality.

 The idea of the proof is to assign $n$-dimensional vectors to the vertices of $G$ which achieve a large value for the so called semidefinite relaxation of the positive discrepancy. Then, 
 use the celebrated Goemans-Williamson \cite{GW95} rounding technique. That is, we take a random half-space through the origin, and consider the set $U$ of vertices of $G$, whose assigned vector lies in this half-space. We show that with a suitable assignment of vectors, the expected positive discrepancy of $U$ is large. First, we consider the case when $G$ is not too far from being regular, and then use Lemma \ref{lemma:maxdeg} to conclude the general case.

 In what follows, let $G$ be a graph on $n$ vertices of average degree $d$, and let $\Delta$ be the maximum degree of $G$. We will use the following simple technical result.

 \begin{claim}\label{claim:approximation}
    If $\beta\in [0,\frac{\pi}{2}]$ and $t=\cos(\frac{\pi}{2}-\beta)\leq 0.5$, then $0\leq t\leq \beta\leq t+t^2$.
   \end{claim}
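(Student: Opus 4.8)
The plan is to prove the chain of inequalities $0\le t\le\beta\le t+t^2$ under the hypotheses $\beta\in[0,\frac{\pi}{2}]$ and $t=\cos(\frac{\pi}{2}-\beta)=\sin\beta\le 0.5$. The very first observation I would make is the trigonometric simplification $t=\cos(\frac{\pi}{2}-\beta)=\sin\beta$, which turns the claim into the statement $0\le\sin\beta\le\beta\le\sin\beta+\sin^2\beta$ for the relevant range of $\beta$. The nonnegativity $t\ge 0$ and the standard inequality $t=\sin\beta\le\beta$ are immediate from $\beta\in[0,\frac{\pi}{2}]$, so the only substantive content is the upper bound $\beta\le\sin\beta+\sin^2\beta$.

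For the remaining inequality, the first step is to pin down the range of $\beta$ implied by the hypothesis $t\le 0.5$. Since $\sin$ is increasing on $[0,\frac{\pi}{2}]$, the condition $\sin\beta\le 0.5$ forces $\beta\le\frac{\pi}{6}$. So it suffices to verify $g(\beta):=\sin\beta+\sin^2\beta-\beta\ge 0$ on the interval $[0,\frac{\pi}{6}]$, a compact interval on which everything is smooth. The natural approach is calculus: compute $g(0)=0$ and $g'(\beta)=\cos\beta+2\sin\beta\cos\beta-1=\cos\beta(1+2\sin\beta)-1$, and argue that $g'(\beta)\ge 0$ throughout $[0,\frac{\pi}{6}]$, so that $g$ is nondecreasing and hence $g\ge g(0)=0$.

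The one place that needs a little care — and the main (mild) obstacle — is showing $g'(\beta)=\cos\beta(1+2\sin\beta)-1\ge 0$ on $[0,\frac{\pi}{6}]$, since $\cos\beta<1$ pushes the product down while $1+2\sin\beta>1$ pushes it up, and one must check the product stays above $1$. I would handle this by writing $s=\sin\beta\in[0,\frac12]$, so $\cos\beta=\sqrt{1-s^2}$ and the inequality becomes $\sqrt{1-s^2}\,(1+2s)\ge 1$; squaring (both sides positive) reduces this to $(1-s^2)(1+2s)^2\ge 1$, a polynomial inequality in $s$ that expands to $-4s^4-4s^3+3s^2+4s\ge 0$, i.e. $s(4-4s^2-4s^3+3s)\ge 0$ after factoring out $s$; for $s\in[0,\frac12]$ the bracketed factor is easily seen to be positive (at $s=\frac12$ it equals $4+\frac32-1-\frac12>0$, and it is clearly positive for smaller $s$), giving $g'\ge 0$ as needed. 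An even cleaner alternative, avoiding the derivative sign analysis, would be to bound crudely: on $[0,\frac{\pi}{6}]$ use $\sin\beta\ge\beta-\frac{\beta^3}{6}$ and $\sin^2\beta\ge(\beta-\frac{\beta^3}{6})^2\ge\frac{\beta^3}{6}$ for $\beta$ in this range, so that $\sin\beta+\sin^2\beta\ge\beta-\frac{\beta^3}{6}+\frac{\beta^3}{6}=\beta$; I would choose whichever of these two routes reads most transparently, but either completes the proof.
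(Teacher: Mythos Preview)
Your proof is correct and essentially identical to the paper's: both set up the difference function and show its derivative is nonnegative, the only distinction being that the paper parametrizes by $t$ (taking $f(t)=t+t^2-\arcsin t$ and asserting $f'(t)=1+2t-\frac{1}{\sqrt{1-t^2}}\ge 0$ on $[0,0.5]$) while you parametrize by $\beta$. Since $g(\beta)=f(\sin\beta)$, these are the same argument, and your squared polynomial inequality is exactly what justifies the paper's bare assertion that $f'\ge 0$.
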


   \begin{proof}
   Note that $\beta=\arcsin(t)$ and $t\geq 0$, so we clearly have $\beta\geq t$. On the other hand, setting $f(t)=t+t^2-\arcsin(t)$, we have $f'(t)=1+2t-\frac{1}{\sqrt{1-t^2}}$, which is nonnegative for $t\in [0,0.5]$. Thus, $f(t)\geq f(0)=0$.
   \end{proof}

  \begin{lemma}\label{lemma:main_sparse}
      If $\Delta\leq 2d$, then
		$$\disc^{+}(G)>\begin{cases} c_3d^{1/2}n &\mbox{if }d \leq n^{2/3},\\
			c_3n^2/d & \mbox{if }n^{2/3}<d\leq n/2,\end{cases}$$
     where $c_3>0$ is some absolute constant.
    \end{lemma}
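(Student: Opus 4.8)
The plan is to implement the Goemans--Williamson strategy outlined just before the statement: assign unit vectors to the vertices so that the semidefinite relaxation of the positive discrepancy is large, then round with a random hyperplane. The natural choice of vectors comes from the spectrum of the adjacency matrix $A$ of $G$. Write $d=\lambda_1\ge \lambda_2\ge\dots\ge\lambda_n$ with orthonormal eigenvectors $v_1=\frac{1}{\sqrt n}\mathds 1,v_2,\dots,v_n$. Since $\operatorname{tr}(A)=0$ and $\operatorname{tr}(A^2)=\sum_i \lambda_i^2 = nd$ (using $\Delta\le 2d$ to control the number of edges, so $\sum d_i = dn$), and since $\lambda_1^2=d^2\le (n/2)d\cdot 2 \le$ a constant fraction of $nd$ in the relevant range, a positive proportion of the ``spectral mass'' $\sum_{i\ge 2}\lambda_i^2=nd-d^2$ must sit on either the large positive eigenvalues or the large negative ones. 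I would assign to vertex $i$ the unit vector $w_i\in\mathbb{R}^n$ whose $j$-th coordinate is $\pm v_j(i)$ with signs chosen to align the contribution of a chosen band of eigenvalues, so that $\langle w_i,w_j\rangle$ correlates with the $(i,j)$ entry of $A$ restricted to that eigenvalue band.

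Next I would carry out the rounding. Pick a uniformly random Gaussian direction $g$, let $U=\{i:\langle w_i,g\rangle\ge 0\}$, and estimate $\mathbb{E}[\disc(U)]$. The standard Goemans--Williamson computation gives $\Pr[i,j \text{ on the same side}]=1-\frac{\theta_{ij}}{\pi}$, where $\theta_{ij}=\arccos\langle w_i,w_j\rangle$; using Claim \ref{claim:approximation} to replace the arccos with its linear approximation $\frac{\pi}{2}-\theta_{ij}\approx \langle w_i,w_j\rangle$ (valid when the inner products are small, which is where the band selection matters), the expected edge surplus inside $U$ becomes, up to constants and lower-order $O(n)$ error, proportional to $\sum_{(i,j)\in E}\langle w_i,w_j\rangle$, i.e.\ to the portion of $\sum_i \lambda_i \langle \text{stuff}\rangle$ carried by the chosen eigenvalue band. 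Balancing the two regimes: when $d\le n^{2/3}$ the dominant band contributes roughly $\sqrt{\sum_{i\ge 2}\lambda_i^2}\cdot$(normalization)$\,\gtrsim d^{1/2}n$, while for $n^{2/3}<d\le n/2$ the same quantity is limited by the number of eigenvalues available and the constraint $\lambda_1=d$, yielding $\gtrsim n^2/d$. I would make the two cases precise by comparing $\sum_{i\ge 2}\lambda_i^2 = nd-d^2$ against the trivial bound $|\lambda_i|\le d$, which forces many eigenvalues to be spread out and caps the achievable surplus exactly at the crossover $d\approx n^{2/3}$.

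The main obstacle I anticipate is twofold. First, the linearization via Claim \ref{claim:approximation} requires the pairwise inner products $\langle w_i,w_j\rangle$ to be genuinely small (below $0.5$); guaranteeing this uniformly forces me to normalize the vectors carefully and perhaps to split off the contribution of the top eigenvector $v_1$ (the all-ones direction, corresponding to the global density term $p\binom{|U|}{2}$) before rounding, so that it does not swamp the discrepancy signal. Second, the $O(n)$ additive errors from the arccos approximation and from the integrality of $|U|$ must be shown to be negligible against the main term $d^{1/2}n$ or $n^2/d$; this is automatic once $d\ge 1$ in the first case but needs a small check near the boundary $d\approx n/2$ in the second. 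The regularity-type hypothesis $\Delta\le 2d$ enters precisely here: it guarantees $\operatorname{tr}(A^2)=\sum_i d_i=dn$ and prevents a few high-degree vertices from distorting the spectral mass, so that $\sum_{i\ge 2}\lambda_i^2=\Theta(nd)$ as claimed; the reduction to this case for general $G$ is then handled by Lemma \ref{lemma:maxdeg}.
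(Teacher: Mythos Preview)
Your proposal has a genuine gap. First, several setup claims are wrong: the hypothesis is only $\Delta\le 2d$, not regularity, so neither $\lambda_1=d$ nor $v_1=\frac{1}{\sqrt n}\mathds 1$ is available (one only has $d\le\lambda_1\le\Delta$); and $\operatorname{tr}(A^2)=\sum_i d_i=dn$ holds by the definition of average degree, with no need for $\Delta\le 2d$. More seriously, your vector assignment collapses. If $w_i(j)=\epsilon_j\, v_j(i)$ with a sign $\epsilon_j$ depending only on the eigenvector index $j$, then $\langle w_i,w_k\rangle=\sum_j v_j(i)v_j(k)=I_{ik}$ for every choice of signs, so all off-diagonal inner products vanish and the rounding yields nothing. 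Restricting to a band $S$ of eigenvalues produces the Gram matrix $\sum_{j\in S}v_jv_j^T$, but then the $w_i$ are no longer unit vectors and you give no mechanism either to keep the inner products uniformly below $1/2$ (needed for Claim~\ref{claim:approximation}) or to extract the stated value; indeed your asserted output ``$\sqrt{\sum_{i\ge 2}\lambda_i^2}\cdot(\text{normalization})\gtrsim d^{1/2}n$'' is off by a factor $\sqrt n$, since $\sqrt{\sum_{i\ge 2}\lambda_i^2}=\sqrt{nd-\lambda_1^2}=O(\sqrt{nd})$.

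The paper's proof is \emph{not} spectral. It assigns to each vertex $v$ the explicit combinatorial vector $x_v\in\mathbb R^{V(G)}$ with $x_v(v)=1$, $x_v(u)=z$ for $u\sim v$, and $x_v(u)=0$ otherwise, taking $z=\delta/\sqrt d$ when $d\le n^{2/3}$ and $z=\delta n/d^2$ when $d>n^{2/3}$. After normalizing to $y_v=x_v/\|x_v\|_2$ one has $\langle y_u,y_v\rangle\ge z$ on edges and $\langle y_u,y_v\rangle\le z^2\,d(u,v)$ on non-edges, where $d(u,v)$ is the codegree. The double count $\sum_{u\ne v}d(u,v)\le n\Delta^2\le 4nd^2$ --- this is where $\Delta\le 2d$ is actually used --- then feeds into the hyperplane rounding to give $\mathbb E[\disc(U)]\gtrsim zdn-d^3z^2$, and the two choices of $z$ optimize this expression in the two ranges. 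Spectral arguments enter the paper only later, for the dense regime, and there they pass through $\pdisc(G)$ and incur a $\log n$ loss via Grothendieck's inequality.
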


    \begin{proof}

    Let $\delta=10^{-4}$, let $z= \frac{\delta}{\sqrt{d}}$ if $d\leq n^{2/3}$, and $z=\frac{\delta n}{d^2}$ if $d> n^{2/3}$. In both cases, $z\leq \frac{\delta}{\sqrt{d}}$. For $v\in V(G)$, define the vector $x_v\in\mathbb{R}^{V(G)}$ such that for $u\in V(G)$, 
   $$x_v(u)=\begin{cases}1 &\mbox{ if } u=v,\\
                        z &\mbox{ if } uv\in E(G),\\
                        0 &\mbox{ otherwise.}
   \end{cases}$$
   Note that $||x_v||_2\leq (1+\Delta z^2)^{1/2}\leq \sqrt{2}$. Also, let $y_v=x_v/||x_v||_2$, and let $\alpha_{u,v}$ be the angle between $y_u$ and $y_v$. Then, $\langle y_u,y_v\rangle=\cos(\alpha_{u,v})$. Let $\textbf{w}$ be a random unit vector in $\mathbb{R}^{V(G)}$, chosen from the uniform distribution, and define the set
   $$U=\{v\in V(G):\langle y_v,\textbf{w}\rangle\geq 0\}.$$
   Let us calculate the expectation of $\disc^{+}(U)$. For $v\in V(G)$, let $X_v$ be the indicator random variable of the event $v\in U$. Clearly, $\mathbb{E}(X_v)=\frac{1}{2}$. On the other hand, for $u,v\in V(G)$,
   $$\mathbb{E}(X_uX_v)=\frac{\pi-\alpha_{u,v}}{2\pi}.$$
   Indeed, let $H$ be the plane spanned by $y_u$ and $y_v$, and let $\textbf{w}=\textbf{w}_0+\textbf{w}_1$, where $\textbf{w}_0\in H$ and $\langle \textbf{w}_0,\textbf{w}_1\rangle=0$. Then $X_u=X_v=1$ if and only if $\textbf{w}_0$ is contained in the intersection of the half-planes of $H$ with normal vectors $y_u$ and $y_v$. This intersection forms an angle of size $\pi-\alpha_{u,v}$. Therefore the probability that $\textbf{w}_0$ is contained in this intersection is $\frac{\pi-\alpha_{u,v}}{2\pi}$ for every fixed $\textbf{w}_1$.
   
   Writing $\beta_{u,v}=\frac{\pi}{2}-\alpha_{u,v}$, we thus have $\mathbb{E}(X_uX_v)=\frac{1}{4}+\frac{\beta_{u,v}}{2\pi}$. Therefore,
   $$2\mathbb{E}(\disc(U))=\sum_{u\sim v}\mathbb{E}(X_{u}X_{v})-\frac{d}{n-1}\sum_{u\neq v} \mathbb{E}(X_uX_v)=\frac{1}{2\pi}\left(\sum_{u\sim v}\beta_{u,v}-\frac{d}{n-1}\sum_{u\neq v}\beta_{u,v}\right).$$
   Here and later,  $\sum_{u\sim v}$,  and $\sum_{u\neq v}$ denote the sum over all ordered pairs $(u,v)$ of vertices, with the appropriate additional condition; $u\sim v$ denotes that $\{u,v\}\in E(G)$. 
   
   Observe that for any $u\neq v$, we have $\langle y_u,y_v\rangle \leq \delta$, so by Claim \ref{claim:approximation}, $\langle y_u,y_v\rangle\leq \beta_{u,v}\leq (1+\delta)\langle y_u,y_v\rangle$. Hence,
   $$\frac{1}{2\pi}\left(\sum_{u\sim v}\beta_{u,v}-\frac{d}{n-1}\sum_{u\neq v}\beta_{u,v}\right)\geq \frac{1}{2\pi}\left(\sum_{u\sim v}\langle y_u,y_v\rangle-\frac{d}{n-1}\sum_{u\neq v}\langle y_u,y_v\rangle\right)-\frac{\delta d}{n-1}\sum_{u\neq v}\langle y_u,y_v\rangle.$$
   Note that if $u\sim v$, then $\langle y_u,y_v\rangle\geq \frac{2z}{||x_u||_2 ||x_v||_2}\geq z$. On the other hand, for every $u\not\sim v$, $\langle y_u,y_v\rangle\leq d(u,v)z^2$, where $d(u,v)$ is the number of common neighbours of $u$ and $v$. But by a simple double counting argument, $\sum_{u\neq v}d(u,v)\leq n\Delta^2$, so the right hand side of the previously highlighted  inequality is 
   $$\left(\frac{1}{2\pi}-\frac{d}{2\pi (n-1)}-\frac{\delta d}{n-1}\right)\cdot\sum_{u\sim v}\langle y_u,y_v\rangle-\left(\frac{d}{n-1}+\frac{\delta d}{n-1}\right)\cdot \sum_{u\not\sim v}\langle y_u,y_v\rangle\geq \frac{zdn}{30}-\frac{2d}{n} (n\Delta^2) z^2,$$
   by using the fact that $d\leq n/2$. Recalling that $\delta=10^{-4}$  and that $\Delta\leq 2d$, we thus get
   $$2\mathbb{E}(\disc(U))\geq \frac{zdn}{30}-8d^3 z^2\geq \begin{cases}
       \frac{\delta}{40}\cdot d^{1/2}n &\mbox{ if } d\leq n^{2/3}\\
       \frac{\delta}{40}\cdot \frac{n^{2}}{d} &\mbox{ if } d\geq n^{2/3}.
   \end{cases}.$$
   \end{proof}

   Now we are ready to prove the first two inequalities of Theorem \ref{thm:main}, which we restate as follows.

   \begin{theorem}\label{thm:main_sparse}
      There exists some absolute constant $c>0$ such that
		$$\disc^{+}(G)>\begin{cases} cd^{1/2}n &\mbox{if }d \leq n^{2/3},\\
			cn^2/d & \mbox{if }n^{2/3}<d\leq n/2.\end{cases}$$
     
    \end{theorem}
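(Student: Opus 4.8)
The plan is to bootstrap the two results already in hand: Lemma~\ref{lemma:main_sparse} proves the theorem whenever the maximum degree satisfies $\Delta\le 2d$, and Lemma~\ref{lemma:maxdeg} converts the presence of many high degree vertices into large positive discrepancy. So the only thing left to do is remove the hypothesis $\Delta\le 2d$ by a reduction. To keep the bookkeeping uniform I would write $\Phi=\Phi(d)=d^{1/2}n$ for $d\le n^{2/3}$ and $\Phi=n^2/d$ for $n^{2/3}<d\le n/2$, so that the goal is exactly $\disc^{+}(G)>c\Phi$; note that the two formulas agree up to a constant factor ($\Phi=\Theta(n^{4/3})$) at the crossover $d=n^{2/3}$, and that $\Phi\ge n$ throughout. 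Fix $\delta=\tfrac12$, let $c_1,c_2$ be the constants from Lemma~\ref{lemma:maxdeg}, let $X=\{v:\deg(v)\ge(1+\delta)d\}$ be the high degree vertices, and set $R=e(X)+e(X,X^{c})$, the number of edges meeting $X$.

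I would then split on the size of $R$ relative to a threshold $\theta\Phi$, where $\theta>0$ is a small constant to be fixed last. If $R\ge\theta\Phi$, Lemma~\ref{lemma:maxdeg} gives $\disc^{+}(G)\ge c_1R-c_2n\ge c_1\theta\Phi-c_2n$, which beats $c\Phi$ provided $\Phi$ is large compared with $n$ and $c$ is small. If instead $R<\theta\Phi$, I would delete every edge meeting $X$ to form $G'$ on the same vertex set: now every vertex of $X$ is isolated while every other vertex already had degree at most $(1+\delta)d$, so $\Delta(G')\le(1+\delta)d=\tfrac32 d$. Since $e(G')=m-R$, the average degree $d'$ of $G'$ lies in $[\tfrac34 d,d]$ as long as $R\le dn/8$, which holds for $\theta$ small; in particular $\Delta(G')\le\tfrac32 d\le 2d'$, so Lemma~\ref{lemma:main_sparse} applies to $G'$ and yields $\disc^{+}(G')\ge c_3\Phi(d')$. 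Because $d'=\Theta(d)$, I get $\Phi(d')\ge c_5\Phi$ for an absolute constant $c_5$ (the case where $d$ and $d'$ land on opposite sides of $n^{2/3}$ is harmless, as the two formulas match up to a constant at the crossover).

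The last step is to transfer the bound from $G'$ back to $G$. Since $G'\subseteq G$ and the densities differ by $p-p'=R/\binom{n}{2}$, for every $U$ one has $\disc_G(U)-\disc_{G'}(U)=\bigl(e_G(U)-e_{G'}(U)\bigr)-(p-p')\binom{|U|}{2}\ge -R$, because the first term is nonnegative and the second is at most $R$. Choosing $U$ to maximise $\disc_{G'}$ then gives $\disc^{+}(G)\ge\disc^{+}(G')-R\ge c_5\Phi-\theta\Phi\ge c\Phi$ once $\theta<c_5$ and $c$ is small. This settles both cases whenever $\Phi\gg n$, that is, whenever $d$ is bounded away from both extremes (a fixed constant, and a constant fraction of $n/2$). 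In the two leftover ranges the target $\Phi$ is only $\Theta(n)$, and I would dispatch them directly from the Bollob\'as--Scott bound $\disc^{+}(G)=\Omega(n)$ quoted in the introduction, choosing $c$ small enough that $c\Phi=O(cn)$ falls below it.

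I expect the main obstacle to be not the reduction itself but exactly this regime $\Phi=\Theta(n)$: there the additive loss $c_2n$ in Lemma~\ref{lemma:maxdeg} and the loss $R$ from edge deletion are of the same order as the target, so neither case can be made to beat $c\Phi$ by the argument above, which is precisely why those extreme values of $d$ must be peeled off and handled separately. Away from them the whole argument is a routine reconciliation of constants, since all the genuine content already lives in Lemmas~\ref{lemma:main_sparse} and~\ref{lemma:maxdeg}.
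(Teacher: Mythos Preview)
Your proof is correct and follows essentially the same route as the paper: delete the edges touching high-degree vertices, apply Lemma~\ref{lemma:main_sparse} to the resulting graph, and use Lemma~\ref{lemma:maxdeg} to handle the case where many such edges exist. The only cosmetic differences are that the paper thresholds $f$ against $\tfrac{\delta}{10}dn$ rather than against $\theta\Phi$ (which forces it into a small secondary case split comparing $f$ with $\disc^{+}(G_0)$), and that it absorbs the boundary regime $\Phi=\Theta(n)$ into the implicit ``sufficiently large $n$'' hypothesis rather than invoking Bollob\'as--Scott explicitly as you do.
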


    \begin{proof}
        Let $c_1,c_2$ be the constants given by Lemma \ref{lemma:maxdeg} with respect to $\delta=0.1$, and let $c_3$ be the constant given by Lemma \ref{lemma:main_sparse}. Let $X$ be the set of vertices of degree more than $(1+\delta)d$, and let $f=e(X)+e(X,X^c)$. If $f\geq \frac{\delta}{10} dn$, we have 
  $$\disc^{+}(G)\geq c_1f-c_2n=\Omega(\delta dn),$$ and we are done. Hence, we may assume $f<\frac{\delta}{10} dn$. Let $G_0$ be the graph we get after removing all edges containing some vertex of $X$. Then $e(G_0)=e(G)-f$, the average degree of $G$ is $ d_0:=(dn-2f)/n\geq (1-\delta)d$, and its density is $p_0=p-f/\binom{n}{2}$. Observe that the maximum degree of $G_0$ is at most $(1+\delta)d<2d_0$. Therefore, by Lemma \ref{lemma:main_sparse}, we have
		$$\disc^{+}(G_0)>\begin{cases} c_3d_0^{1/2}n &\mbox{if }d_0 \leq n^{2/3},\\
			c_3n^2/d_0 & \mbox{if } n^{2/3}\leq d_0\leq n/2. \end{cases}$$
  Let $U\subset V(G)$ be a set attaining $\disc_{G_0}(U)=\disc^{+}(G_0)$. Then
  \begin{align*}|\disc_{G_0}(U)-\disc_{G}(U)|&=\left|e_{G_0}(U)-e_{G}(U)+\frac{d-d_0}{n-1}\binom{|U|}{2}\right|
  \leq f+\frac{2f}{n(n-1)}\cdot \frac{n(n-1)}{2}=2f.
  \end{align*}
		Thus, if $\disc^{+}(G_0)\geq 4f$, we have $\disc^{+}(G)\geq \frac{1}{2}\disc^{+}(G_0)$. On the other hand, if $4f>\disc^{+}(G_0)$, then $$\disc^{+}(G)\geq c_1f-c_2n\geq \frac{c_1}{4}\disc^{+}(G_0)-c_2n.$$
	Hence, combining this with our lower bounds on $\disc^{+}(G_0)$, and recalling that $d\geq d_0\geq d/2$, we get
 $$\disc^{+}(G)\geq \begin{cases} cd^{1/2}n &\mbox{if }d\leq n^{2/3},\\
			c n^2/d & \mbox{if }n^{2/3}\leq d\leq n/2,\end{cases}$$
   with some constant $c>0$.  \end{proof}
	
   \section{Semidefinite programming}\label{sect:semidefinite}
	The goal of this section is to introduce a semidefinite relaxation of $\disc^{+}(G)$, which provides a good approximation based on the so called graph Grothendieck's inequality \cite{AMMN,CW04}.

	Let $G$ be a graph with $n$ vertices, $m$ edges, density $p$, and recall that for $U\subset V(G)$, we have $\disc(U)=e(U)-p\binom{|U|}{2}$. Let us extend the definition of $\disc(.)$ for vectors. Without loss of generality, let the vertex set of $G$ be $\{1,\dots,n\}$, and for $\mathbf{x}\in\mathbb{R}^{n}$, let 
	$$\disc(\mathbf{x})=\frac{1}{2}\sum_{i\sim j}\mathbf{x}(i)\mathbf{x}(j)-\frac{p}{2}\sum_{i\neq j} \mathbf{x}(i)\mathbf{x}(j).$$

	Clearly, if $\mathbf{x}$ is the characteristic vector of $U$, we have $\disc(U)=\disc(\mathbf{x})$, and so $\disc^{+}(G)=\max_{\mathbf{x}\in \{0,1\}^n}\disc(\mathbf{x})$. Let us slightly modify the definition of $\disc^{+}$, and define
	$$\disc^{+}_1(G)=\max_{\mathbf{x}\in \{-1,1\}}\disc(\mathbf{x}).$$
	
	\begin{claim}
		$$\disc^{+}_1(G)\leq 4\disc^{+}(G).$$
	\end{claim}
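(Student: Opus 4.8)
The plan is to read $\disc(\mathbf{x})$ for a sign vector $\mathbf{x}\in\{-1,1\}^n$ as a combination of the ordinary ($\{0,1\}$-valued) discrepancies of the two parts that $\mathbf{x}$ cuts $V(G)$ into, and then to collapse the resulting cross term using the zero-sum identity (\ref{equ:zero_disc}). Concretely, given $\mathbf{x}\in\{-1,1\}^n$ I would set $U=\{i:\mathbf{x}(i)=1\}$ and $W=U^c=\{i:\mathbf{x}(i)=-1\}$. Splitting the two ordered-pair sums in the definition of $\disc(\mathbf{x})$ according to whether the endpoints lie both in $U$, both in $W$, or one in each --- and using that $\mathbf{x}(i)\mathbf{x}(j)=+1$ within a part and $-1$ across --- the adjacency sum becomes $2e(U)+2e(W)-2e(U,W)$ and the all-pairs sum becomes $2\binom{|U|}{2}+2\binom{|W|}{2}-2|U||W|$. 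After dividing by the prefactors this yields the identity
\[
\disc(\mathbf{x})=\disc(U)+\disc(W)-\disc(U,W).
\]

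The second step removes the cross term. Applying (\ref{equ:zero_disc}) to the partition $(U,U^c)=(U,W)$ gives $\disc(U)+\disc(U,W)+\disc(W)=0$, i.e.\ $\disc(U,W)=-\disc(U)-\disc(W)$. Substituting this turns the minus sign into a plus, so that
\[
\disc(\mathbf{x})=2\disc(U)+2\disc(W).
\]
Since $U$ and $W$ are honest vertex subsets, $\disc(U)\le\disc^{+}(G)$ and $\disc(W)\le\disc^{+}(G)$ directly from the definition of $\disc^{+}$, whence $\disc(\mathbf{x})\le 4\disc^{+}(G)$. Taking the maximum over all $\mathbf{x}\in\{-1,1\}^n$ gives $\disc^{+}_1(G)\le 4\disc^{+}(G)$.

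There is no serious obstacle here, as the statement is short, but the point that deserves attention is the provenance of the constant $4$. A priori the cross discrepancy $\disc(U,W)$ could be large and negative, so that $\disc(\mathbf{x})$ exceeds the discrepancy of either single part; the identity (\ref{equ:zero_disc}) shows that this excess is not arbitrary but equals $\disc(U)+\disc(W)$ again, which is exactly what doubles each term and produces the factor $4$ rather than $2$. I would also stress that the argument uses no regularity assumption, since (\ref{equ:zero_disc}) --- equivalently $\disc(V(G))=0$ --- holds for every graph. Equivalently, one may phrase the computation in matrix form: writing $\disc(\mathbf{x})=\tfrac12\mathbf{x}^{T}M\mathbf{x}$ with $M=A-p(J-I)$, the vanishing $\mathbf{1}^{T}M\mathbf{1}=0$ is precisely the zero-sum fact (it uses only $p=2m/(n(n-1))$), and the polarization $\mathbf{x}^{T}M\mathbf{x}+\mathbf{1}^{T}M\mathbf{1}=2\mathbf{a}^{T}M\mathbf{a}+2\mathbf{b}^{T}M\mathbf{b}$, with $\mathbf{a},\mathbf{b}$ the indicator vectors of $U,W$ (so $\mathbf{a}+\mathbf{b}=\mathbf{1}$ and $\mathbf{x}=\mathbf{a}-\mathbf{b}$), reproduces the displayed identity.
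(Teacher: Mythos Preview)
Your proof is correct and follows essentially the same route as the paper: decompose $\disc(\mathbf{x})$ as $\disc(U)+\disc(U^c)-\disc(U,U^c)$, apply the zero-sum identity (\ref{equ:zero_disc}) to obtain $\disc(\mathbf{x})=2\disc(U)+2\disc(U^c)$, and bound each term by $\disc^{+}(G)$. The only cosmetic difference is that the paper fixes an optimal $\mathbf{x}$ at the outset and concludes that one of $\disc(U),\disc(U^c)$ is at least $\disc^{+}_1(G)/4$, whereas you bound for arbitrary $\mathbf{x}$ and take the maximum at the end.
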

	\begin{proof}
		Let $\mathbf{x}\in \{-1,1\}^{n}$ be a vector attaining the maximum of $\disc(.)$, and let $U=\{i:\mathbf{x}(i)=1\}$. Then 
		$$\disc^{+}_1(G)=\disc(\mathbf{x})=\disc(U)-\disc(U,U^c)+\disc(U^c).$$
		On the other hand, we also have
		$$0=\disc(U)+\disc(U,U^c)+\disc(U^c).$$
		Therefore, $\disc^{+}_1(G)=2\disc(U)+2\disc(U^c)$, which means that either $\disc(U)$ or $\disc(U^c)$ is at least $\disc^{+}_1(G)/4$.
	\end{proof}
	
	Observe that $\disc(\mathbf{x})$ is a multilinear function, so its maximum on $[-1,1]^n$ is attained by one of the extremal points. Therefore,
	$$\disc^{+}_1(G)=\max_{\mathbf{x}\in [-1,1]^{n}}\disc(\mathbf{x}).$$
	This also shows that $\disc^{+}(G)\leq \disc^{+}_1(G)$.
	
	Finally, we introduce the semidefinite relaxation of $\disc_1^{+}(G)$ as follows (and get rid of the factor $1/2$). Let $\pdisc^*(G)$ be the maximum of 
	$$\sum_{i\sim j}\langle v_i,v_j\rangle-p\sum_{i\neq j} \langle v_i,v_j\rangle$$
	over all $v_1,\dots,v_n\in\mathbb{R}^{n}$ such that $||v_i||_2\leq 1$. We trivially have $\pdisc^*(G)\geq 2\disc_1^{+}(G).$ On the other hand, $\pdisc^*(G)$ cannot be much smaller than $\disc_1^{+}(G)$ either. In order to show this, we use the following lemma. This lemma is a symmetric analogue of Grothendieck's inequality \cite{Groth}, a fundamental result in functional analysis.

	\begin{lemma}[\cite{AMMN,CW04}]\label{lemma:groth}
		There exists a universal constant $C>0$ such that the following holds. Let $M\in\Sym(n)$. Let 
		$$\beta=\sup_{\mathbf{x}\in [-1,1]^{n}}\sum M_{i,j}\mathbf{x}(i)\mathbf{x}(j),$$
		and let
		$$\beta^{*}=\sup_{v_1,\dots,v_n\in \mathbb{R}^{n},||v_i||_2\leq 1}\sum M_{i,j}\langle v_i,v_j\rangle.$$
		Then $\beta\leq \beta^{*}\leq C\beta \log n$.
	\end{lemma}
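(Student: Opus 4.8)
The two inequalities require very different amounts of work. The bound $\beta\le\beta^{*}$ is immediate: given any $\mathbf{x}\in[-1,1]^{n}$, fix a unit vector $e\in\mathbb{R}^{n}$ and set $v_i=\mathbf{x}(i)\,e$. Then $\|v_i\|_2=|\mathbf{x}(i)|\le 1$ and $\langle v_i,v_j\rangle=\mathbf{x}(i)\mathbf{x}(j)$, so this feasible SDP point already realises $\sum M_{i,j}\mathbf{x}(i)\mathbf{x}(j)$; taking the supremum over $\mathbf{x}$ gives $\beta\le\beta^{*}$.

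The substance is the upper bound $\beta^{*}\le C\beta\log n$, and the plan is a Gaussian rounding of the optimal SDP vectors equipped with a truncation that is essential for indefinite $M$. Let $v_1,\dots,v_n$ be near-optimal for $\beta^{*}$ (we may normalise them to unit vectors, at the cost of tracking the diagonal of $M$ separately), write $G=(\langle v_i,v_j\rangle)_{i,j}$ for their Gram matrix, let $g\in\mathbb{R}^{n}$ be a standard Gaussian vector, and set $r_i=\langle g,v_i\rangle$, so $(r_i,r_j)$ is a centered Gaussian pair with correlation $\rho_{ij}=\langle v_i,v_j\rangle$. Fix a threshold $T$ and let $\phi_T$ be the clamped-linear function $\phi_T(a)=\tfrac1T\operatorname{sign}(a)\min\{|a|,T\}$, which is odd and takes values in $[-1,1]$. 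Rounding to $y_i:=\phi_T(r_i)\in[-1,1]$ produces, for every realisation of $g$, a point $y\in[-1,1]^{n}$, hence $\sum M_{i,j}y_iy_j\le\beta$; averaging gives
$$\sum_{i,j} M_{i,j}\,\mathbb{E}\bigl[\phi_T(r_i)\phi_T(r_j)\bigr]\le\beta.$$

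The point of the truncation is that this expectation has a clean Hermite expansion. Writing $\phi_T=\sum_{m\text{ odd}}c_m\tilde H_m$ in the orthonormal Hermite basis, the correlation identity $\mathbb{E}[\tilde H_m(r_i)\tilde H_{m'}(r_j)]=\delta_{m,m'}\rho_{ij}^{m}$ yields
$$\mathbb{E}\bigl[\phi_T(r_i)\phi_T(r_j)\bigr]=\sum_{k\ge 0}c_{2k+1}^{2}\,\rho_{ij}^{2k+1}=\sum_{k\ge0}a_k\,\rho_{ij}^{2k+1},\qquad a_k:=c_{2k+1}^{2}\ge 0.$$
All coefficients are nonnegative, and I would record three facts: the leading one is $a_0=\bigl(\mathbb{E}[r\,\phi_T(r)]\bigr)^{2}=\Theta(1/T^{2})$ for large $T$; the total mass is $\sum_k a_k=\mathbb{E}[\phi_T(r)^{2}]\le 1$; and the tail mass is exponentially small, $\sum_{k\ge1}a_k=\mathbb{E}[\phi_T(r)^{2}]-a_0\le\Pr(|r|>T)=e^{-\Theta(T^{2})}$, by a direct Gaussian tail estimate. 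Substituting the expansion into the averaged inequality, recognising $\sum_{i,j}M_{i,j}\rho_{ij}=\beta^{*}$ in the $k=0$ term while each higher term equals $\langle M,G^{\circ(2k+1)}\rangle$ for the Hadamard power $G^{\circ(2k+1)}$, gives
$$a_0\,\beta^{*}\le\beta+\sum_{k\ge1}a_k\,\bigl|\langle M,G^{\circ(2k+1)}\rangle\bigr|.$$

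The main obstacle — and the source of the $\log n$ loss — is controlling the higher-order terms. Here I would invoke that each Hadamard power $G^{\circ(2k+1)}$ is positive semidefinite with diagonal at most $1$ (Lemma \ref{lemma:hadamard}), so its entries lie in $[-1,1]$ and $|\langle M,G^{\circ(2k+1)}\rangle|\le\sum_{i,j}|M_{i,j}|$, a crude bound that is polynomial in $n$ once we normalise $\beta=1$. Combining this with $\sum_{k\ge1}a_k\le e^{-\Theta(T^{2})}$ and $a_0=\Theta(1/T^{2})$ gives $\beta^{*}\le O(T^{2})\bigl(\beta+e^{-\Theta(T^{2})}\sum_{i,j}|M_{i,j}|\bigr)$, and choosing $T=\Theta(\sqrt{\log n})$ makes the Gaussian tail beat the polynomial factor, absorbing the error term into $O(\beta)$ and leaving $\beta^{*}\le O(\log n)\,\beta$. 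The delicate points I expect to spend the most care on are the honest normalisation that renders $\sum_{i,j}|M_{i,j}|$ polynomially comparable to $\beta$ (treating the diagonal of $M$ separately) and the verification of $a_0=\Theta(1/T^{2})$ together with the tail estimate; the conceptual engine, however, is simply that truncation keeps the rounded point inside the cube while the nonnegativity of the Hermite coefficients confines all the damage to an exponentially small tail.
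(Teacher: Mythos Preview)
The paper does not prove this lemma: it is quoted as a black box from \cite{AMMN,CW04}, so there is no in-paper argument to compare against. Your sketch is in fact the standard Charikar--Wirth/AMMN argument (Gaussian rounding with a truncated-linear function and a Hermite expansion), and the outline is sound.

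Two places deserve the extra care you flag. First, the ``normalisation that renders $\sum_{i,j}|M_{i,j}|$ polynomially comparable to $\beta$'' genuinely needs the diagonal separated: once $M_{ii}=0$ (which holds in the paper's application $M=A-\tfrac{p}{2}(J-I)$), testing $x$ supported on $\{i,j\}$ gives $\beta\ge 2|M_{i,j}|$, so $\sum_{i,j}|M_{i,j}|\le n^{2}\beta$ and your choice $T=\Theta(\sqrt{\log n})$ kills the tail. Without zeroing the diagonal this step can fail, so you should make that reduction explicit (pad each $v_i$ to a unit vector in a larger space, which leaves off-diagonal inner products unchanged). Second, your stated inequality $\sum_{k\ge 1}a_k\le \Pr(|r|>T)$ is not literally correct; what is true (and all you need) is $\sum_{k\ge 1}a_k=\mathbb{E}[\phi_T(r)^2]-(\mathbb{E}[r\,\phi_T(r)])^{2}=O\!\bigl(e^{-T^{2}/2}/T\bigr)$, which follows from the exact Gaussian integrals for $\mathbb{E}[r^{2}\mathbf{1}_{|r|\le T}]$ and $\mathbb{E}[|r|\mathbf{1}_{|r|>T}]$. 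With these two points tightened, the argument goes through and matches the cited references.
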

	
	If $A$ is the adjacency matrix of $G$, we can apply Lemma \ref{lemma:groth} with $M:=A-\frac{p}{2}(J-I)$ to conclude that $\pdisc^*(G)\leq C\disc_1^{+}(G)\log n$.
	
	In the rest of the paper, we rather directly work with the Gram matrix given by the $n$ vectors. That is, we will consider the following equivalent definition of $\pdisc^*(G):$
	\begin{align*}
		\max_{X\in \Sym(n)}\ \ \ &\sum_{i\sim j}X_{i,j}-p\sum_{i\neq j}X_{i,j}\\
		\mbox{\textbf{subject to }}\ \ &X_{i,i}\leq 1 \mbox{ for all }i,\\
		&X\succeq 0.
	\end{align*} 
 
    It is actually slightly more convenient to work with the following modified version of $\pdisc^*(G)$. For a matrix $X\in\Sym(n)$, write 
	$$\disc(X)=\sum_{i\sim j}X_{i,j}-\frac{d}{n}\sum X_{i,j}=\left\langle X,A-\frac{d}{n}J\right\rangle,$$
	and let $\pdisc(G)$ be the solution of
	\begin{align*}
		\max_{X\in \Sym(n)}\ \ \ &\disc(X)\\
		\mbox{\textbf{subject to }}\ \ &X_{i,i}\leq 1 \mbox{ for all }i,\\
		&X\succeq 0.
	\end{align*}
	Note that 
	$$\disc^*(X)-\disc(X)=\frac{d}{n}\sum_i X_{ii}+\frac{d}{n(n-1)}\sum_{i \not = j}X_{ij}=\left\langle X,\frac{d}{n-1}I-\frac{d}{n(n-1)}J\right\rangle,$$
	so $\pdisc(G)\leq \pdisc^*(G)\leq \pdisc(G)+2d$. The first inequality follows as $I-\frac{1}{n}J$ is positive semidefinite and the second from the fact that since $X\succeq 0, X_{ii} \leq 1$, then
 all $X_{i,j} \leq 1$. To summarize this section so far, we arrive to the following relationship between $\pdisc(G)$ and $\disc^{+}(G)$.
	
	\begin{lemma}\label{lemma:disc_to_disc}
		There exists a constant $c>0$ such that $$2d+\pdisc(G)\geq \disc^{+}(G)\geq \frac{c\pdisc(G)}{\log n}.$$
	\end{lemma}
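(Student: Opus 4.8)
The plan is to read off both inequalities from the chain of comparisons established earlier in this section; no new idea is required, since the one substantive input --- Grothendieck's inequality in the form of Lemma \ref{lemma:groth} --- has already been applied. The task is essentially to order the relations correctly.

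For the upper bound I would begin from the observation $\disc^{+}(G)\leq \disc^{+}_1(G)$ recorded after the multilinearity remark, and combine it with $2\disc^{+}_1(G)\leq \pdisc^*(G)$ and $\pdisc^*(G)\leq \pdisc(G)+2d$ to obtain
$$\disc^{+}(G)\leq \disc^{+}_1(G)\leq \tfrac{1}{2}\pdisc^*(G)\leq \tfrac{1}{2}\pdisc(G)+d\leq \pdisc(G)+2d,$$
the last step being valid because $\pdisc(G)\geq 0$ (the zero matrix is feasible for the program defining $\pdisc(G)$). This is the left-hand inequality.

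For the lower bound I would traverse the same chain in the opposite direction. Using $\pdisc(G)\leq \pdisc^*(G)$, the Grothendieck bound $\pdisc^*(G)\leq C\disc^{+}_1(G)\log n$, and the Claim $\disc^{+}_1(G)\leq 4\disc^{+}(G)$, I get
$$\pdisc(G)\leq \pdisc^*(G)\leq C\log n\cdot \disc^{+}_1(G)\leq 4C\log n\cdot \disc^{+}(G).$$
Choosing $c=\tfrac{1}{4C}$ and rearranging yields $\disc^{+}(G)\geq \tfrac{c\pdisc(G)}{\log n}$, the right-hand inequality.

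The only genuinely nontrivial ingredient --- and the sole source of a non-constant factor --- is the step $\pdisc^*(G)\leq C\disc^{+}_1(G)\log n$, i.e.\ Grothendieck's inequality, which accounts for the $\log n$ in the statement but is cited rather than reproved here. Everything else is a bounded-factor comparison between the $\{0,1\}$-valued, the $\{-1,1\}$-valued, and the semidefinite formulations of the discrepancy, so the main care needed is simply to keep the directions of the inequalities straight and to note that $\pdisc(G)\geq 0$ so that the additive $2d$ term comfortably absorbs the sharper bound $\tfrac12\pdisc(G)+d$.
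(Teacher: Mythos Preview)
Your proposal is correct and follows exactly the approach of the paper: the lemma is stated there as a summary of the chain of inequalities $\disc^{+}(G)\leq \disc^{+}_1(G)$, $\disc^{+}_1(G)\leq 4\disc^{+}(G)$, $2\disc^{+}_1(G)\leq \pdisc^*(G)\leq C\disc^{+}_1(G)\log n$, and $\pdisc(G)\leq \pdisc^*(G)\leq \pdisc(G)+2d$ already established in the section, with the Grothendieck-type Lemma~\ref{lemma:groth} as the sole nontrivial input. Your ordering of the steps and the observation that $\pdisc(G)\geq 0$ (to absorb $\tfrac12\pdisc(G)+d$ into $\pdisc(G)+2d$) are both fine.
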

 
	\section{Positive discrepancy and eigenvalues}\label{sect:easy}
	
	In this section, we present a number of lower bounds on the positive discrepancy based on the eigenvalues of the adjacency matrix.   For the rest of this section and the next section, let $G$ be a graph on vertex set $\{1,\dots,n\}$ with average degree $d$, and maximum degree $\Delta$. Let $A$ be the adjacency matrix of $G$, let $\lambda_1\geq \dots\geq \lambda_n$ be the eigenvalues of $A$, and let $v_1,\dots,v_n$ be an orthonormal basis of corresponding eigenvectors. We will write $\mathbf{e}=\frac{1}{\sqrt{n}}\mathds{1}$. 
 
 The following inequalities between $d,\Delta,\lambda_1$ are well known, see e.g. Theorem 1.1 in the survey of Cvetković and Rowlinson~\cite{eigenvalue_survey}.
	
	\begin{claim}\label{claim:first_eigen}
		 $d\leq \lambda_1\leq \Delta$.
	\end{claim}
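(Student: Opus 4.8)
The statement to prove is the chain of inequalities $d \leq \lambda_1 \leq \Delta$, where $d$ is the average degree, $\Delta$ the maximum degree, and $\lambda_1$ the largest eigenvalue of the adjacency matrix $A$. This is a classical fact, and the plan is to establish each inequality by a Rayleigh-quotient argument.

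For the lower bound $d \leq \lambda_1$, the plan is to use the variational characterization $\lambda_1 = \max_{\|x\|_2 = 1} x^T A x$ and test it against the normalized all-ones vector $\mathbf{e} = \frac{1}{\sqrt{n}}\mathds{1}$. A direct computation gives $\mathbf{e}^T A \mathbf{e} = \frac{1}{n}\mathds{1}^T A \mathds{1} = \frac{1}{n}\sum_{i,j} A_{i,j} = \frac{2m}{n} = d$, since $\sum_{i,j}A_{i,j}$ counts each edge twice and $d = 2m/n$. Because $\|\mathbf{e}\|_2 = 1$, this yields $\lambda_1 \geq \mathbf{e}^T A \mathbf{e} = d$.

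For the upper bound $\lambda_1 \leq \Delta$, the plan is to take an eigenvector $v_1$ associated with $\lambda_1$ and examine its coordinate of largest absolute value. Let $i$ be an index maximizing $|v_1(i)|$; we may assume $v_1(i) > 0$ by rescaling $v_1$ by $-1$ if necessary. Looking at the $i$-th coordinate of the eigenvalue equation $A v_1 = \lambda_1 v_1$ gives $\lambda_1 v_1(i) = \sum_{j \sim i} v_1(j)$. Bounding the right-hand side by $\sum_{j \sim i} |v_1(j)| \leq \deg(i)\, v_1(i) \leq \Delta\, v_1(i)$ and dividing through by $v_1(i) > 0$ yields $\lambda_1 \leq \Delta$.

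Neither step presents a genuine obstacle, as both are standard Rayleigh-quotient and extremal-coordinate arguments; since the result is classical and an explicit reference (Theorem 1.1 in \cite{eigenvalue_survey}) is already cited, the proof can simply be stated concisely or omitted with a pointer to the literature. The only point requiring minor care is ensuring the chosen coordinate of $v_1$ is nonzero, which is automatic for the maximum-absolute-value coordinate of a nonzero eigenvector.
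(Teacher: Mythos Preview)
Your proposal is correct and gives the standard Rayleigh-quotient and extremal-coordinate proof; the paper itself does not supply a proof but simply cites Theorem~1.1 of \cite{eigenvalue_survey}, so your argument is exactly the kind of proof that reference would contain.
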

 


    Let us start with a useful identity.
	
	\begin{claim}\label{claim:identity1}
		For $i=1,\dots,n$, let $\alpha_i,\beta_i$ be real numbers, and let $X=\sum_{i=1}^{n}\alpha_i  v_{i}v_i^{T}$ and  $Y=\sum_{i=1}^{n}\beta_i  v_{i}v_i^{T}.$ Then
		$$\mathds{1}^{T}(X\circ Y)\mathds{1}=\langle X,Y\rangle=\sum_{i=1}^{n}\alpha_i \beta_i.$$
	\end{claim}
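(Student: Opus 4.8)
The plan is to establish the two claimed equalities separately, both by direct computation; neither requires any idea beyond bilinearity of the inner product and the orthonormality of the eigenvectors, so I would simply organize the bookkeeping carefully.

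First I would handle $\mathds{1}^{T}(X\circ Y)\mathds{1}=\langle X,Y\rangle$, noting that this holds for \emph{arbitrary} $X,Y\in\Sym(n)$ and has nothing to do with the spectral decomposition. By the definition of the Hadamard product, $(X\circ Y)_{k,l}=X_{k,l}Y_{k,l}$, and left- and right-multiplying a matrix by the all-ones vector simply sums all of its entries. Hence
$$\mathds{1}^{T}(X\circ Y)\mathds{1}=\sum_{k,l}(X\circ Y)_{k,l}=\sum_{k,l}X_{k,l}Y_{k,l}=\langle X,Y\rangle,$$
where the last equality is just the definition of the inner product on $\Sym(n)$.

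For the second equality $\langle X,Y\rangle=\sum_{i}\alpha_i\beta_i$, I would substitute the spectral representations $X=\sum_i\alpha_i v_iv_i^{T}$ and $Y=\sum_j\beta_j v_jv_j^{T}$ and use bilinearity of $\langle\cdot,\cdot\rangle$ to obtain $\langle X,Y\rangle=\sum_{i,j}\alpha_i\beta_j\langle v_iv_i^{T},v_jv_j^{T}\rangle$. The one point worth stating cleanly is the identity $\langle v_iv_i^{T},v_jv_j^{T}\rangle=\langle v_i,v_j\rangle^2$, which I would justify by passing to the trace:
$$\langle v_iv_i^{T},v_jv_j^{T}\rangle=\mathrm{tr}\!\left(v_iv_i^{T}v_jv_j^{T}\right)=(v_i^{T}v_j)(v_j^{T}v_i)=\langle v_i,v_j\rangle^2.$$
Since $v_1,\dots,v_n$ is an orthonormal basis, $\langle v_i,v_j\rangle=\delta_{i,j}$, so every off-diagonal term vanishes and only the terms with $i=j$ survive, giving $\langle X,Y\rangle=\sum_{i}\alpha_i\beta_i$.

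There is essentially no obstacle here, as the statement is a formal identity: the only mild care needed is the reduction $\langle v_iv_i^{T},v_jv_j^{T}\rangle=\langle v_i,v_j\rangle^2$, which is why I would phrase it via the trace rather than summing entries by hand. Everything else is linearity combined with orthonormality.
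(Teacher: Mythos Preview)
Your proof is correct and follows essentially the same approach as the paper's: the paper also expands $\langle X,Y\rangle=\sum_{i,j}\alpha_i\beta_j\langle v_i,v_j\rangle^2$ and uses orthonormality to collapse the sum. Your write-up is simply more thorough, since you explicitly verify the first equality $\mathds{1}^{T}(X\circ Y)\mathds{1}=\langle X,Y\rangle$ and justify $\langle v_iv_i^{T},v_jv_j^{T}\rangle=\langle v_i,v_j\rangle^2$ via the trace, whereas the paper leaves both of these implicit.
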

	
	\begin{proof}
		We have
		$$\langle X,Y\rangle=\sum_{1\leq i,j\leq n}\alpha_i\beta_j\langle v_{i},v_{j}\rangle^2=\sum_{i=1}^{n}\alpha_i\beta_i.$$
	\end{proof}
	
	\begin{corollary}\label{cor:identity2}
		Let $\alpha_1,\dots,\alpha_n\geq 0$ and let $X=\sum_{i=1}^{n}\alpha_i  v_{i}v_i^{T}$. Then
		$$\disc(X)\geq \sum_{i=1}^{n}\alpha_i\lambda_i-d\cdot \max_{i\in [n]}\alpha_i.$$
	\end{corollary}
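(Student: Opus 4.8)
The plan is to split $\disc(X)=\langle X, A-\frac{d}{n}J\rangle$ into the two pieces $\langle X,A\rangle$ and $\frac{d}{n}\langle X,J\rangle$, compute the first exactly and bound the second from above. The first term is handled directly by Claim \ref{claim:identity1}: since $A=\sum_{i=1}^{n}\lambda_i v_i v_i^{T}$, applying the claim with $Y=A$ (so $\beta_i=\lambda_i$) gives
\[
\langle X,A\rangle=\sum_{i=1}^{n}\alpha_i\lambda_i .
\]

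For the second term, I would write $\langle X,J\rangle=\mathds{1}^{T}X\mathds{1}$ and expand using $X=\sum_{i=1}^{n}\alpha_i v_i v_i^{T}$, which yields
\[
\langle X,J\rangle=\sum_{i=1}^{n}\alpha_i\langle \mathds{1},v_i\rangle^{2}.
\]
Here is the one place where the hypothesis $\alpha_i\geq 0$ is essential: since every summand is nonnegative, I can bound $\alpha_i\le \max_j\alpha_j$ termwise and pull the maximum out, obtaining $\langle X,J\rangle\le (\max_i\alpha_i)\sum_{i=1}^{n}\langle \mathds{1},v_i\rangle^{2}$. Because $v_1,\dots,v_n$ form an orthonormal basis, Parseval gives $\sum_{i=1}^{n}\langle \mathds{1},v_i\rangle^{2}=\|\mathds{1}\|_2^{2}=n$, so $\langle X,J\rangle\le n\max_i\alpha_i$.

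Combining the two estimates,
\[
\disc(X)=\langle X,A\rangle-\frac{d}{n}\langle X,J\rangle\ge \sum_{i=1}^{n}\alpha_i\lambda_i-\frac{d}{n}\cdot n\max_i\alpha_i=\sum_{i=1}^{n}\alpha_i\lambda_i-d\max_{i\in[n]}\alpha_i,
\]
which is the claimed inequality. There is no real obstacle here; the argument is a short direct computation. The only subtlety worth flagging is that the nonnegativity assumption on the $\alpha_i$ is exactly what lets me replace the (possibly sign-indefinite) weighting of $\langle \mathds{1},v_i\rangle^2$ by the uniform bound $\max_i\alpha_i$ and then invoke orthonormality of the eigenbasis to kill the factor of $n$.
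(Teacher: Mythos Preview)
Your proof is correct and follows essentially the same approach as the paper: compute $\langle X,A\rangle$ exactly via the eigenbasis expansion, bound $\frac{d}{n}\langle X,J\rangle=d\sum_i\alpha_i\langle v_i,\mathbf{e}\rangle^2$ using nonnegativity of the $\alpha_i$, and finish with Parseval for the orthonormal basis $v_1,\dots,v_n$. The only cosmetic difference is that the paper works with the unit vector $\mathbf{e}=\frac{1}{\sqrt{n}}\mathds{1}$ rather than $\mathds{1}$, which absorbs the factor of $n$ earlier.
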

	
	\begin{proof}
		Writing $J=n\cdot \mathbf{e}\mathbf{e}^T$, we have
		\begin{align*}
		 \disc(X)&=\langle X,A-d\mathbf{e}\mathbf{e}^T\rangle=\sum_{i=1}^{n}\alpha_i\lambda_i-d\sum_{i=1}^{n}\alpha_i\langle v_i,\mathbf{e}\rangle^2\\
		&\geq \sum_{i=1}^{n}\alpha_i\lambda_i-d\left(\max_{i\in [n]}\alpha_i\right)\cdot\sum_{i=1}^{n}\langle v_i,\mathbf{e}\rangle^2=\sum_{i=1}^{n}\alpha_i\lambda_i-d\cdot \max_{i\in [n]}\alpha_i.
	\end{align*}
	 Here, the second equality holds by Claim \ref{claim:identity1}, while the last equality holds as $v_1,\dots,v_n$ is an orthonormal basis.
	\end{proof}
	
	In the next three lemmas, we present lower bounds on $\pdisc(G)$ with the help of the positive eigenvalues of $A$. Let $K$ denote the largest index $i$ such that $\lambda_{i}>0$.

	\begin{lemma}\label{lemma:sum_eigenvalues}
		$$\pdisc(G)\geq \sum_{i=2}^{K}\lambda_i.$$
	\end{lemma}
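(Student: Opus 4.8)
The plan is to exhibit a single explicit feasible matrix $X$ for the semidefinite program defining $\pdisc(G)$ whose objective value already meets the target. The natural candidate is the orthogonal projection onto the span of \emph{all} eigenvectors with positive eigenvalue, namely
$$X=\sum_{i=1}^{K}v_iv_i^{T}.$$
Note that I include the top eigenvector $v_1$, not merely $v_2,\dots,v_K$; this turns out to be the crucial point, as explained below.

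First I would check feasibility. Being a sum of rank-one positive semidefinite matrices, $X\succeq 0$. For the diagonal constraint, since $v_1,\dots,v_n$ is an orthonormal basis of $\mathbb{R}^n$, for each $j$ we have $\sum_{i=1}^{n}v_i(j)^2=1$, and hence
$$X_{j,j}=\sum_{i=1}^{K}v_i(j)^2\leq \sum_{i=1}^{n}v_i(j)^2=1.$$
Thus $X$ is admissible in the program, so $\pdisc(G)\geq \disc(X)$.

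Next I would estimate $\disc(X)$ using Corollary \ref{cor:identity2} with the coefficients $\alpha_i=1$ for $1\leq i\leq K$ and $\alpha_i=0$ otherwise; here $\max_{i}\alpha_i=1$, giving
$$\disc(X)\geq \sum_{i=1}^{K}\alpha_i\lambda_i-d\cdot\max_{i}\alpha_i=\sum_{i=1}^{K}\lambda_i-d.$$
The key observation is that the penalty term $-d$ coming from the $\mathbf{e}\mathbf{e}^{T}$ part of the objective is exactly absorbed by the top eigenvalue: by Claim \ref{claim:first_eigen} we have $\lambda_1\geq d$, so
$$\sum_{i=1}^{K}\lambda_i-d=(\lambda_1-d)+\sum_{i=2}^{K}\lambda_i\geq \sum_{i=2}^{K}\lambda_i.$$
Combining these gives $\pdisc(G)\geq \sum_{i=2}^{K}\lambda_i$, as required.

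I expect the only genuine subtlety to be the choice to project onto the positive eigenspace including $v_1$ rather than stopping at $v_2$. Projecting onto $\mathrm{span}(v_2,\dots,v_K)$ would leave a residual penalty of $-d\sum_{i=2}^{K}\langle v_i,\mathbf{e}\rangle^2$, which need not vanish when $G$ is irregular (so that $\mathbf{e}$ is not an eigenvector). Including $v_1$ and invoking $\lambda_1\geq d$ is precisely what removes this penalty for arbitrary graphs, and no further work is needed.
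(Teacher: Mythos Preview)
Your proof is correct and is essentially identical to the paper's: the paper also takes $X=\sum_{i=1}^{K}v_iv_i^{T}$, notes $0\preceq X\preceq I$ to get the diagonal bound, applies Corollary~\ref{cor:identity2}, and uses $\lambda_1\geq d$ to absorb the $-d$ term. Your explicit computation $X_{j,j}=\sum_{i=1}^{K}v_i(j)^2\leq 1$ is just a spelled-out version of the observation $X\preceq I$.
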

	
	\begin{proof}
		Set
		$X=\sum_{i=1}^{K}v_iv_i^{T},$
		then $0\preceq X\preceq I$. The upper bound implies that $X_{i,i}\leq 1$ for $i\in [n]$. Therefore, by Corollary \ref{cor:identity2},
		$$\pdisc(G)\geq \disc(X)\geq \sum_{i=1}^K\lambda_i-d\geq \sum_{i=2}^{K}\lambda_i.$$
	\end{proof}

        The sum $\sum_{i=1}^{n}|\lambda_i|$ is known as the \emph{energy} of the graph $G$, and it is denoted by $\mathcal{E}(G)$. As $\sum_{i=1}^{n}\lambda_i=0$, we have that $\sum_{i=2}^{K}\lambda_i=\frac{1}{2}\mathcal{E}(G)-\lambda_1$. Hence, Lemma \ref{lemma:sum_eigenvalues} immediately implies the following relation between the energy and the positive discrepancy, which might be of independent interest.

        \begin{corollary}
            $$\pdisc(G)\geq \frac{1}{2}\cdot \mathcal{E}(G)-\lambda_1.$$
        \end{corollary}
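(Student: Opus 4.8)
The plan is to derive the corollary directly from Lemma~\ref{lemma:sum_eigenvalues} by re-expressing the partial sum $\sum_{i=2}^{K}\lambda_i$ in terms of the energy $\mathcal{E}(G)=\sum_{i=1}^{n}|\lambda_i|$. The key observation is the trace identity $\sum_{i=1}^{n}\lambda_i=\mathrm{tr}(A)=0$, which holds because the diagonal of the adjacency matrix is zero. This single identity is what converts a one-sided sum of positive eigenvalues into the full energy.

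First I would split the energy according to the sign of the eigenvalues. Recall $K$ is the largest index with $\lambda_i>0$, so $\lambda_1,\dots,\lambda_K$ are exactly the positive eigenvalues and $\lambda_{K+1},\dots,\lambda_n$ are the nonpositive ones. Writing $P=\sum_{i=1}^{K}\lambda_i$ for the sum of positive eigenvalues and $N=\sum_{i=K+1}^{n}|\lambda_i|$ for the sum of absolute values of the nonpositive eigenvalues, we have $\mathcal{E}(G)=P+N$. The trace identity $\sum_{i=1}^{n}\lambda_i=0$ reads $P-N=0$, i.e.\ $P=N$, so $P=\tfrac{1}{2}\mathcal{E}(G)$. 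Thus
$$\sum_{i=2}^{K}\lambda_i=P-\lambda_1=\tfrac{1}{2}\mathcal{E}(G)-\lambda_1.$$

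Combining this with Lemma~\ref{lemma:sum_eigenvalues}, which states $\pdisc(G)\geq\sum_{i=2}^{K}\lambda_i$, immediately yields $\pdisc(G)\geq\tfrac{1}{2}\mathcal{E}(G)-\lambda_1$, as claimed. There is essentially no obstacle here; the entire content is the elementary bookkeeping of splitting $\mathcal{E}(G)$ by sign and using $\mathrm{tr}(A)=0$. The only point warranting a word of care is the boundary behavior of the index $K$: if $\lambda_n=0$ were counted among the ``positive'' block the identity $P=N$ would still hold since zero eigenvalues contribute nothing to either side, so no degenerate case causes trouble. This is precisely why the statement follows as a one-line corollary rather than requiring a separate argument.
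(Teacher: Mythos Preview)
Your proof is correct and follows exactly the same approach as the paper: use the trace identity $\sum_{i=1}^n\lambda_i=0$ to rewrite $\sum_{i=2}^{K}\lambda_i=\tfrac{1}{2}\mathcal{E}(G)-\lambda_1$, then invoke Lemma~\ref{lemma:sum_eigenvalues}.
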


        Next, we present a lower bound on $\pdisc(G)$ using the cubic sum of positive eigenvalues.
	
	\begin{lemma}\label{lemma:cube_sum_eigenvalues}
		$$\pdisc(G)\geq \frac{1}{\Delta}\sum_{i=2}^{K}\lambda_i^{3}.$$
	\end{lemma}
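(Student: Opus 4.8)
I want to lower bound $\pdisc(G)$ by $\frac{1}{\Delta}\sum_{i=2}^{K}\lambda_i^3$, so following the template of Lemma \ref{lemma:sum_eigenvalues}, I will exhibit a single feasible matrix $X$ (symmetric, positive semidefinite, with $X_{i,i}\le 1$) and estimate $\disc(X)$ via Corollary \ref{cor:identity2}. The corollary tells me that for $X=\sum_i \alpha_i v_i v_i^T$ with $\alpha_i\ge 0$ I get $\disc(X)\ge \sum_i \alpha_i\lambda_i - d\max_i\alpha_i$. To produce a cubic sum $\sum \lambda_i^3$, the natural choice is to weight each eigenvector by $\alpha_i$ proportional to $\lambda_i^2$; concretely I would set $\alpha_i=\lambda_i^2/\Delta$ for the positive eigenvalues $i\le K$ (so the weight is $0$ for $i=1$ only if I also subtract off the trivial term, which Corollary \ref{cor:identity2} handles automatically), giving $\sum_i\alpha_i\lambda_i=\frac{1}{\Delta}\sum_{i=1}^K\lambda_i^3$.

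**Carrying it out.** First I would define $X=\frac{1}{\Delta}\sum_{i=1}^{K}\lambda_i^2\, v_i v_i^T$. Since all coefficients $\lambda_i^2/\Delta$ are nonnegative, $X\succeq 0$ is immediate. The crucial feasibility check is the diagonal constraint $X_{i,i}\le 1$. Here I would use the spectral identity that lets me relate $\sum_i \lambda_i^2 v_i v_i^T$ to a power of the adjacency matrix: observe that $A^2=\sum_{i=1}^n \lambda_i^2 v_i v_i^T$, so $X=\frac{1}{\Delta}\big(A^2-\sum_{i>K}\lambda_i^2 v_i v_i^T\big)=\frac{1}{\Delta}A^2-\frac{1}{\Delta}\sum_{i>K}\lambda_i^2 v_i v_i^T$. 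The diagonal entry $(A^2)_{j,j}$ is just the degree of vertex $j$, which is at most $\Delta$, so $\frac{1}{\Delta}(A^2)_{j,j}\le 1$. Since the subtracted matrix $\frac{1}{\Delta}\sum_{i>K}\lambda_i^2 v_iv_i^T$ is positive semidefinite, its diagonal entries are nonnegative, and subtracting a nonnegative quantity only decreases the diagonal. Hence $X_{j,j}\le \frac{1}{\Delta}(A^2)_{j,j}\le 1$ for every $j$, establishing feasibility.

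**Finishing.** With $X$ feasible, Corollary \ref{cor:identity2} gives
$$\pdisc(G)\ge \disc(X)\ge \sum_{i=1}^{K}\frac{\lambda_i^2}{\Delta}\lambda_i - d\cdot\max_i\frac{\lambda_i^2}{\Delta}=\frac{1}{\Delta}\sum_{i=1}^{K}\lambda_i^3-\frac{d\lambda_1^2}{\Delta}.$$
The maximum of the $\alpha_i$ is attained at $i=1$ (since $\lambda_1$ is the largest eigenvalue in absolute value among the positive ones), giving the error term $d\lambda_1^2/\Delta$. Now I peel off the $i=1$ term: $\frac{1}{\Delta}\sum_{i=1}^K\lambda_i^3=\frac{\lambda_1^3}{\Delta}+\frac{1}{\Delta}\sum_{i=2}^K\lambda_i^3$, and since $\lambda_1\ge d$ by Claim \ref{claim:first_eigen}, the leading term $\frac{\lambda_1^3}{\Delta}\ge \frac{d\lambda_1^2}{\Delta}$ exactly absorbs the error term, leaving $\pdisc(G)\ge\frac{1}{\Delta}\sum_{i=2}^K\lambda_i^3$ as desired.

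**Main obstacle.** The only genuinely delicate point is the diagonal bound $X_{i,i}\le 1$: one must resist computing $X_{j,j}=\frac{1}{\Delta}\sum_i \lambda_i^2 v_i(j)^2$ entry-by-entry and instead recognize it through the identity $A^2=\sum_i\lambda_i^2 v_iv_i^T$ together with the elementary observation (recorded in the matrix-notation subsection) that $A^2-B\succeq 0$ forces $(A^2)_{j,j}\ge B_{j,j}$. Everything else is a direct application of Corollary \ref{cor:identity2} and the bookkeeping that $\lambda_1\ge d$ cancels the subtracted term.
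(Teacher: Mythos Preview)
Your proof is correct and follows essentially the same approach as the paper: both choose the test matrix $X=\frac{1}{\Delta}\sum_{i=1}^{K}\lambda_i^{2}v_iv_i^{T}$, bound its diagonal via the comparison $\sum_{i=1}^{K}\lambda_i^{2}v_iv_i^{T}\preceq A^{2}$ together with $(A^{2})_{j,j}\le\Delta$, and then apply Corollary~\ref{cor:identity2} plus $\lambda_1\ge d$ to cancel the $d\lambda_1^{2}/\Delta$ error term.
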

	
 \begin{proof}
		Set
		$$Y=\sum_{i=1}^{K}\lambda_i^2 v_iv_i^{T} \preceq \sum_{i=1}^{n}\lambda_i^2 v_iv_i^{T}=A^2.$$
		Clearly, $Y\succeq 0$ and since $Y\preceq A^2$, every diagonal entry of $A^2$ is at most $\Delta$. Therefore, $Y_{i,i}\leq \Delta$ for every $i\in [n]$. Hence, we get
		$$\pdisc(G)\geq \disc\left(\frac{1}{\Delta}\cdot Y\right)\geq \frac{1}{\Delta}\left(\sum_{i=1}^{K}\lambda_i^{3}-d\lambda_1^2\right)\geq \frac{1}{\Delta}\sum_{i=2}^{K}\lambda_i^{3},$$
		where the second inequality follows from Corollary \ref{cor:identity2}.
	\end{proof}

        Combining Lemmas \ref{lemma:sum_eigenvalues} and \ref{lemma:cube_sum_eigenvalues}, we can deduce a lower bound using the squared sum of positive eigenvalues.
 
	\begin{lemma}\label{lemma:square_sum_eigenvalues}
		$$\pdisc(G)\geq \frac{1}{\sqrt{\Delta}}\sum_{i=2}^{K}\lambda_i^{2}.$$
	\end{lemma}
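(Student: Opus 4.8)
The plan is to interpolate between the two lower bounds already established, namely Lemma \ref{lemma:sum_eigenvalues}, which gives $\pdisc(G)\geq \sum_{i=2}^{K}\lambda_i$, and Lemma \ref{lemma:cube_sum_eigenvalues}, which gives $\pdisc(G)\geq \frac{1}{\Delta}\sum_{i=2}^{K}\lambda_i^{3}$. The squared sum $\sum_{i=2}^{K}\lambda_i^2$ sits exactly ``between'' the linear sum and the cubic sum, so the natural tool is a Cauchy--Schwarz estimate that controls the middle power by the geometric mean of the two extremes.

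Concretely, I would first note that for $2\leq i\leq K$ we have $\lambda_i>0$ by definition of $K$, so all the quantities involved are nonnegative. Then I would write $\lambda_i^{2}=\lambda_i^{1/2}\cdot\lambda_i^{3/2}$ and apply the Cauchy--Schwarz inequality to the vectors with coordinates $\lambda_i^{1/2}$ and $\lambda_i^{3/2}$, yielding
$$\left(\sum_{i=2}^{K}\lambda_i^{2}\right)^{2}\leq \left(\sum_{i=2}^{K}\lambda_i\right)\left(\sum_{i=2}^{K}\lambda_i^{3}\right).$$
Now I would substitute the two known bounds into the right-hand side: the first factor is at most $\pdisc(G)$ by Lemma \ref{lemma:sum_eigenvalues}, and the second factor is at most $\Delta\cdot\pdisc(G)$ by Lemma \ref{lemma:cube_sum_eigenvalues}. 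This gives
$$\left(\sum_{i=2}^{K}\lambda_i^{2}\right)^{2}\leq \Delta\cdot\pdisc(G)^{2}.$$

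Taking square roots (everything is nonnegative, as $\pdisc(G)\geq 0$ since the zero matrix is feasible) yields $\sum_{i=2}^{K}\lambda_i^{2}\leq \sqrt{\Delta}\,\pdisc(G)$, which rearranges to the claimed inequality $\pdisc(G)\geq \frac{1}{\sqrt{\Delta}}\sum_{i=2}^{K}\lambda_i^{2}$. There is no real obstacle here: the only point requiring a moment's care is ensuring the terms are nonnegative so that Cauchy--Schwarz applies cleanly and the square root is monotone, and both are immediate from the restriction to indices $i\leq K$ and the feasibility of $X=0$. The entire content of the lemma is the elementary observation that the quadratic power sum is squeezed between the linear and cubic power sums.
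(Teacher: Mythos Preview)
Your proof is correct and essentially identical to the paper's own argument: the paper also multiplies the bounds from Lemmas \ref{lemma:sum_eigenvalues} and \ref{lemma:cube_sum_eigenvalues} and applies the same Cauchy--Schwarz inequality $(\sum a_i)(\sum a_i^3)\geq(\sum a_i^2)^2$ to conclude.
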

	
	\begin{proof}
		Let $a_1,\dots,a_k$ be nonnegative real numbers. Then 
		$$(a_1+\dots+a_k)(a_1^3+\dots+a_k^3)\geq (a_1^2+\dots+a_k^2)^2.$$
		Hence, by multiplying the  bounds in Lemma \ref{lemma:sum_eigenvalues} and Lemma \ref{lemma:cube_sum_eigenvalues}, we get
		$$\pdisc(G)^2\geq \frac{1}{\Delta}\left(\sum_{i=2}^{K}\lambda_i\right)\left(\sum_{i=2}^{K}\lambda_i^3\right)\geq \frac{1}{\Delta}\left(\sum_{i=2}^{K}\lambda_i^2\right)^2.$$
	\end{proof}

   The following quantity plays a crucial role in all of our lower bound arguments. Let
   $$\Lambda=\Lambda(G):=-\sum_{i=2}^{n}\lambda_i^3.$$
   Observe that $0\leq \mbox{tr}(A^3)=\sum_{i=1}^{n}\lambda_i^3=\lambda_1^3-\Lambda$, and $\lambda_1\leq \Delta$, so we immediately have the following upper bound on $\Lambda$.
   
   \begin{claim}\label{claim:max_Lambda}
   $\Lambda\leq \Delta^3$
   \end{claim}

   In what follows, we prove that $\Lambda$ being small implies large positive discrepancy.

   \begin{lemma}\label{lemma:Lambda}
   There exists $c>0$ such that the following holds. Let $d\leq n/2$ and $\Delta\leq 1.1d$, then
		$$\pdisc(G)>\begin{cases} cd^{1/2}n &\mbox{if }\Lambda \leq \frac{1}{16} d^{3/2}n,\\
			\frac{cd^2 n^2}{\Lambda}-\Delta & \mbox{if }\Lambda >\frac{1}{16} d^{3/2}n.	\end{cases}$$
   \end{lemma}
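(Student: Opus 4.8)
The plan is to bound $\pdisc(G)$ from below using the lemmas already established in terms of sums of positive eigenvalues, and to turn the hypothesis on $\Lambda$ into quantitative control on those sums. Recall from Lemma~\ref{lemma:sum_eigenvalues} that $\pdisc(G)\geq \sum_{i=2}^K \lambda_i$, from Lemma~\ref{lemma:cube_sum_eigenvalues} that $\pdisc(G)\geq \frac{1}{\Delta}\sum_{i=2}^K\lambda_i^3$, and from Lemma~\ref{lemma:square_sum_eigenvalues} that $\pdisc(G)\geq \frac{1}{\sqrt{\Delta}}\sum_{i=2}^K\lambda_i^2$. The key trace identities are $\sum_{i=1}^n\lambda_i=0$, $\sum_{i=1}^n\lambda_i^2=\operatorname{tr}(A^2)=nd$, and $\sum_{i=1}^n\lambda_i^3=\operatorname{tr}(A^3)=\lambda_1^3-\Lambda\geq 0$. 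Since $\lambda_1\leq \Delta\leq 1.1d$, the second moment of the non-top eigenvalues satisfies $\sum_{i=2}^n\lambda_i^2=nd-\lambda_1^2\geq nd-1.21d^2\geq \tfrac{1}{2}nd$, using $d\leq n/2$. This is the reservoir of spectral mass I will exploit.

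First I would split the second-moment mass between positive and negative eigenvalues: write $P=\sum_{i=2}^K\lambda_i^2$ and $N=\sum_{i=K+1}^n\lambda_i^2$, so $P+N\geq \tfrac12 nd$. The quantity $\Lambda=-\sum_{i=2}^n\lambda_i^3=\sum_{i=K+1}^n|\lambda_i|^3-\sum_{i=2}^K\lambda_i^3$ controls the negative part: since each positive $\lambda_i\leq \lambda_1\leq 1.1d$, the positive cube sum is at most $1.1d\cdot P$, giving $\sum_{i=K+1}^n|\lambda_i|^3\leq \Lambda+1.1dP$. By power-mean / Cauchy--Schwarz on the negative eigenvalues, $N=\sum_{i=K+1}^n|\lambda_i|^2\leq \big(\sum|\lambda_i|^3\big)^{2/3}(n)^{1/3}$ is not quite the form I want; instead I would bound $N$ in terms of the cube sum and the largest magnitude, or more cleanly use $\big(\sum_{i>K}|\lambda_i|^2\big)^{3}\le n\big(\sum_{i>K}|\lambda_i|^3\big)^2$ only when helpful. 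The cleaner route is: in the regime $\Lambda\leq \tfrac{1}{16}d^{3/2}n$, the negative cube sum is small, which forces $N$ to be small unless the negative eigenvalues are individually small, and a short Hölder argument then shows $P\geq \Omega(nd)$; feeding $P\geq \Omega(nd)$ into Lemma~\ref{lemma:square_sum_eigenvalues} yields $\pdisc(G)\geq \frac{1}{\sqrt{\Delta}}P\geq \Omega\!\big(\frac{nd}{\sqrt d}\big)=\Omega(d^{1/2}n)$, which is the first case.

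For the second case $\Lambda>\tfrac{1}{16}d^{3/2}n$, I would instead directly estimate the positive cube sum from below and apply Lemma~\ref{lemma:cube_sum_eigenvalues}. Using $\sum_{i=1}^n\lambda_i^3\geq 0$ we get $\sum_{i=2}^K\lambda_i^3\geq \Lambda-\lambda_1^3+\text{(corrections)}$; more usefully, Cauchy--Schwarz in the form $\big(\sum_{i=2}^K\lambda_i^2\big)^2\leq \big(\sum_{i=2}^K\lambda_i\big)\big(\sum_{i=2}^K\lambda_i^3\big)$ links the three sums, and power-mean applied to the negative part gives $\big(\sum_{i>K}|\lambda_i|^2\big)^{3}\leq \Lambda^2\cdot(\#\{i>K\})\leq \Lambda^2 n$, whence $N\leq (\Lambda^2 n)^{1/3}$. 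Combined with $P+N\geq\tfrac12 nd$ and the assumed lower bound on $\Lambda$, I can solve for $P\geq \Omega(nd)$ or bound the relevant cube sum, and Lemma~\ref{lemma:cube_sum_eigenvalues} gives $\pdisc(G)\geq \frac{1}{\Delta}\sum_{i=2}^K\lambda_i^3\geq \Omega\!\big(\frac{d^2n^2}{\Lambda}\big)-\Delta$ after tracking the $-d\lambda_1^2/\Delta$ correction term as the $-\Delta$ in the statement.

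The main obstacle I anticipate is the bookkeeping that converts the global constraint $P+N\geq \tfrac12 nd$ together with the cube-sum budget $\Lambda$ into a clean lower bound on the \emph{positive} second or third moment in each regime; the negative eigenvalues can hide a lot of second-moment mass while contributing controllably to $\Lambda$, so the crux is the correct application of Hölder/power-mean inequalities (such as $N^3\leq n\big(\sum_{i>K}|\lambda_i|^3\big)^2$) to show that when $\Lambda$ is small the positive mass $P$ must be $\Omega(nd)$, and when $\Lambda$ is large the positive cube sum is comparable to $d^2n^2/\Lambda$. Getting the threshold $\tfrac{1}{16}d^{3/2}n$ to match exactly at the crossover $d^{1/2}n \approx d^2n^2/\Lambda$ (i.e. $\Lambda\approx d^{3/2}n$) is the delicate calibration, but it follows from equating the two bounds, so I expect the inequality manipulations rather than any conceptual difficulty to be the real work.
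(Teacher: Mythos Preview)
Your framework is right (split the non-top second moment as $P+N\ge \tfrac12 nd$, then feed into Lemmas~\ref{lemma:sum_eigenvalues}--\ref{lemma:square_sum_eigenvalues}), but both cases have real gaps.

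For the small-$\Lambda$ case you assert that ``$\Lambda$ small'' forces the negative cube sum $\Lambda_3:=\sum_{i>K}|\lambda_i|^3$ to be small. That does not follow: $\Lambda=\Lambda_3-\sum_{i=2}^K\lambda_i^3$, so $\Lambda$ can be small while $\Lambda_3$ is huge, provided the positive cube sum is huge too. The paper handles this as a separate sub-case: if $\Lambda\le\tfrac12\Lambda_3$ then $\sum_{i=2}^K\lambda_i^3\ge\tfrac12\Lambda_3\ge \tfrac{1}{2\sqrt n}N^{3/2}$, and Lemma~\ref{lemma:cube_sum_eigenvalues} already gives $\pdisc(G)\ge c d^{1/2}n$ once $N\ge nd/4$. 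Only after disposing of this does one know $\Lambda_3\le 2\Lambda$, which is what you implicitly need.

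For the large-$\Lambda$ case your plan cannot work as stated. Your power-mean bound (with $\Lambda_3$ in place of your $\Lambda$) reads $N^3\le n\Lambda_3^2$, but when $\Lambda>\tfrac{1}{16}d^{3/2}n$ this yields $N\lesssim (n\Lambda^2)^{1/3}$, which already exceeds $nd/2$ and gives no information about $P$; so ``solve for $P\ge\Omega(nd)$'' fails, and the cube-sum lemma gives nothing (indeed $\sum_{i=2}^K\lambda_i^3=\Lambda_3-\Lambda$ can be tiny in this regime). The paper's actual argument here uses a different ingredient you did not mention: the trace-zero identity $\sum_{i>K}|\lambda_i|=\sum_{i\le K}\lambda_i$, together with Cauchy--Schwarz $\big(\sum_{i>K}|\lambda_i|\big)\big(\sum_{i>K}|\lambda_i|^3\big)\ge N^2$, to get $\sum_{i=1}^K\lambda_i\ge N^2/\Lambda_3\ge N^2/(2\Lambda)\ge d^2n^2/(32\Lambda)$, and then applies Lemma~\ref{lemma:sum_eigenvalues} (the \emph{linear} sum, not the cube sum) to obtain $\pdisc(G)\ge \tfrac{d^2n^2}{32\Lambda}-\Delta$.
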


   \begin{proof}
    First, let us assume that $\Lambda_2:=\sum_{i=K+1}^{n}\lambda_i^2\leq dn/4$. Then, using  $dn=\mbox{tr}(A^2)=\sum_{i=1}^{n}\lambda_i^2$, we deduce that 
   $$\sum_{i=2}^{K}\lambda_i^2\geq \frac{3dn}{4}-\lambda_1^2\geq \frac{3dn}{4}-\Delta^2\geq \frac{dn}{4}.$$
  Hence, $\pdisc(G)\geq d^{1/2}n/5$ by Lemma \ref{lemma:square_sum_eigenvalues}, so we get the desired inequality independently of the value of $\Lambda$.

   Therefore, we may assume that $\Lambda_2>dn/4$. Let $\Lambda_3=\sum_{i=K+1}^{n}|\lambda_i|^3$. Let us first consider the case when $\Lambda \leq \frac{1}{2}\Lambda_3$. Then $\sum_{i=2}^{K} \lambda_i^3  = \Lambda_3  - \Lambda \geq \frac{1}{2}\Lambda_3$, which together with Lemma \ref{lemma:cube_sum_eigenvalues} implies $\pdisc(G) \geq \frac{\Lambda_3}{2\Delta}$. By the inequality between the quadratic and cubic mean, we have
   $$\left(\frac{\Lambda_2}{n}\right)^{1/2} \leq \left(\frac{\sum_{i=K+1}^{n}\lambda_i^2}{n}\right)^{1/2}\leq \left(\frac{\sum_{i=K+1}^{n} |\lambda_i|^3}{n}\right)^{1/3}=\left(\frac{\Lambda_3}{n}\right)^{1/3},$$
   which in turn implies that 
   $$\pdisc(G) \geq \frac{\Lambda_3}{\Delta} \geq \frac{\Lambda_2^{3/2}}{\Delta \sqrt{n}} \geq cd^{1/2}n$$ for some constant $c$.

   Now suppose that $\Lambda > \frac{1}{2}\Lambda_3 \geq  \frac{1}{2\sqrt{n}} \Lambda_2^{3/2} \geq \frac{1}{16}d^{3/2}n$ and let $\Lambda_1=\sum_{i=K+1}^{n}|\lambda_i|$. Just as in the proof of Lemma \ref{lemma:square_sum_eigenvalues}, we can write $\Lambda_1\Lambda_3\geq \Lambda_2^2.$  But $0=\mbox{tr}(A)=\sum_{i=1}^{K}\lambda_i-\Lambda_1$, so replacing $\Lambda_1$ by $\sum_{i=1}^{K}\lambda_i$, we get the inequality
   $$\sum_{i=1}^{K}\lambda_i\geq \frac{\Lambda_2^2}{2\Lambda}\geq \frac{d^2n^2}{32\Lambda}.$$ 
   Using Lemma \ref{lemma:sum_eigenvalues} and $\lambda_1\leq \Delta$, we conclude
   $$\pdisc(G)\geq \sum_{i=2}^{K}\lambda_i\geq \frac{d^2n^2}{32\Lambda}-\Delta,$$
   finishing the proof.
   \end{proof}



	
	\section{Positive discrepancy in dense regime}\label{sect:hard}

	In this section, we prove the following theorem, which gives the best lower bound on the positive discrepancy when $d$ is not much smaller than $n$, addressing the third inequality in Theorem \ref{thm:main}. At the end of the section, we then combine this with Lemma \ref{lemma:maxdeg} to prove the third inequality in Theorem \ref{thm:main}.
	
	\begin{theorem}\label{thm:difficult}
		For every $\varepsilon>0$ there exist $c>0$ such that the following holds. Let $G$ be a graph on $n$ vertices of average degree $d$ and maximum degree $\Delta\leq d(1+\varepsilon/2)$ such that $d\leq n(1/2-\varepsilon)$. Then 
		$$\pdisc(G)\geq cd^{1/4}n.$$
	\end{theorem}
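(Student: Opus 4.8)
The plan is to prove the stronger bound on $\pdisc(G)$, working with the spectral decomposition $A=\sum_{i=1}^n\lambda_i v_iv_i^T$, the matrix $B=A-\frac{d}{n}J$ (so $\pdisc(G)=\max\{\langle X,B\rangle:X\succeq 0,\ X_{ii}\le 1\}$), and the quantities $\Lambda_1=\sum_{i>K}|\lambda_i|$, $\Lambda_2=\sum_{i>K}\lambda_i^2$, $\Lambda_3=\sum_{i>K}|\lambda_i|^3$ from the proof of Lemma \ref{lemma:Lambda}; recall $\Delta\le 1.1d$ by hypothesis and Claim \ref{claim:first_eigen}. I would first dispose of two easy regimes. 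If the positive eigenmass is large, say $\sum_{i=2}^K\lambda_i^2\ge d^{3/4}n$, then Lemma \ref{lemma:square_sum_eigenvalues} already gives $\pdisc(G)\ge \tfrac{1}{\sqrt{\Delta}}\sum_{i=2}^K\lambda_i^2\gtrsim d^{1/4}n$. If instead $\Lambda\le c_0 d^{7/4}n$ for a suitably small constant $c_0$, then Lemma \ref{lemma:Lambda} gives $\pdisc(G)\ge \tfrac{cd^2n^2}{\Lambda}-\Delta\ge \tfrac{c}{c_0}d^{1/4}n-\Delta\gtrsim d^{1/4}n$, using $\Delta\le 1.1 d\ll d^{1/4}n$ (valid since $d\le n/2$ forces $d^{3/4}\ll n$). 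It therefore suffices to handle the \emph{hard case} in which the positive part of the spectrum is negligible while $\Lambda>c_0 d^{7/4}n$ is large; spectrally, apart from $\lambda_1\approx d$, essentially all of the mass $\sum_{i\ge 2}\lambda_i^2=dn-\lambda_1^2$ lies on the negative eigenvalues, whose typical absolute value is $\gtrsim \Lambda_3/\Lambda_2\gtrsim d^{3/4}$.

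The core idea in the hard case is to turn this negative spectral mass into a feasible test matrix through the Schur product theorem (Lemma \ref{lemma:hadamard}). Let $M=\sum_{i>K}|\lambda_i|v_iv_i^T\succeq 0$ be the negative part of $-A$, so that $M=A_{+}-A$ with $A_{+}=\sum_{i\le K}\lambda_i v_iv_i^T$. By Lemma \ref{lemma:hadamard}, $M\circ M\succeq 0$, its diagonal is $(M\circ M)_{ii}=M_{ii}^2$, and
$$\langle M\circ M,\,B\rangle=\sum_{i\sim j}M_{ij}^2-\frac{d}{n}\,\|M\|_F^2=\sum_{i\sim j}M_{ij}^2-\frac{d}{n}\,\Lambda_2.$$
To lower bound this I would write $A_{+}=\frac{d}{n}J+E$ (using near-regularity so that $\lambda_1 v_1v_1^T\approx \frac{d}{n}J$), where $\|E\|_F^2=\sum_{2\le i\le K}\lambda_i^2\le d^{3/4}n$ is negligible by the first reduction. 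On each edge $M_{ij}=(\tfrac{d}{n}-1)+E_{ij}$, so $\sum_{i\sim j}M_{ij}^2=dn(1-\tfrac{d}{n})^2+O(d^{7/8}n)$, while $\Lambda_2=dn-\lambda_1^2-\|E\|_F^2\approx dn-d^2$. Thus, up to lower order terms,
$$\langle M\circ M,\,B\rangle\approx dn\Big(1-\tfrac{d}{n}\Big)^2-d^2\Big(1-\tfrac{d}{n}\Big)=\Big(1-\tfrac{d}{n}\Big)\,d\,(n-2d)\gtrsim \varepsilon\, dn,$$
where the final inequality is exactly where the hypothesis $d\le(\tfrac12-\varepsilon)n$ enters, via $n-2d\ge 2\varepsilon n$ (and indeed the complete bipartite graph, with $d=n/2$, makes this objective vanish). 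So at the Frobenius scale the objective is already $\gtrsim_\varepsilon dn$; what remains is to rescale $M\circ M$ so that its diagonal satisfies $X_{ii}\le 1$ while retaining as much of this $\gtrsim dn$ as possible.

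The main obstacle — and the most delicate and involved part of the argument — is precisely this diagonal control. If the entries $M_{ii}$ were all comparable to their average $\Lambda_1/n$, a single global rescaling would give $\pdisc(G)\gtrsim d^{1/2}n$, far beyond the target. The genuine difficulty is that the negative eigenvectors may \emph{localize}, concentrating both the diagonal mass $\sum_i M_{ii}=\Lambda_1$ and the edge-energy $\sum_{i\sim j}M_{ij}^2$ onto a few vertices with very large $M_{ii}$, which would force a much smaller rescaling factor. My plan is to separate vertices according to the size of $M_{ii}$: restrict $M$ to the bulk $\{i:M_{ii}\le T\}$ (a principal submatrix, hence still positive semidefinite), take the Hadamard square there, and rescale by $T^{-2}$. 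The crux is then to bound the edge-energy discarded together with the high-diagonal vertices; the naive estimates $\sum_i M_{ii}^2\le \|M\|_F^2=\Lambda_2\le dn$ and $(M^2)_{ii}\le \Delta\, M_{ii}$ are by themselves too lossy, so this step requires a finer, geometric control of how the negative eigenspace sits on the vertex set (e.g. a dyadic analysis across the scales of $M_{ii}$). Balancing the scaling gain $T^{-2}dn$ against the discarded mass is what ultimately pins the threshold near $T\approx d^{3/8}$ and produces the bound $\pdisc(G)\gtrsim d^{1/4}n$; I expect essentially all of the work of the theorem to lie here, and this is also the step responsible for the degradation of the exponent from the conjecturally optimal $d^{1/3}$ to the provable $d^{1/4}$.
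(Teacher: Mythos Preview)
Your easy reductions (via Lemma~\ref{lemma:square_sum_eigenvalues} when $\sum_{i=2}^K\lambda_i^2$ is large, and via Lemma~\ref{lemma:Lambda} when $\Lambda$ is small) match the paper exactly. In the hard case, however, your route diverges from the paper's and is left genuinely incomplete.

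\textbf{The gap.} You correctly isolate diagonal control of $M\circ M$ as the obstacle but then stop, appealing to an unspecified ``finer, geometric control'' and dyadic analysis. In fact your own setup already contains what is needed: $\mbox{tr}(M)=\Lambda_1=\lambda_1+\sum_{i=2}^K\lambda_i\le \Delta+\pdisc(G)$ by Lemma~\ref{lemma:sum_eigenvalues}. Working under the contradiction hypothesis $\pdisc(G)\le d^{1/4}n$, this gives $\sum_i M_{ii}\lesssim d^{1/4}n$, so with threshold $T\asymp d^{3/8}$ there are at most $O(d^{-1/8}n)$ bad vertices. Since $(M^2)_{ii}\le (A^2)_{ii}\le \Delta$, the edge-energy lost to bad vertices is $O(d^{7/8}n)=o(dn)$, and after rescaling by $T^{-2}$ one recovers $\pdisc(G)\gtrsim_\varepsilon d^{1/4}n$ --- at least in the regular case. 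So the missing idea is not a dyadic or geometric argument; it is the single observation that Lemma~\ref{lemma:sum_eigenvalues} controls $\mbox{tr}(M)$. (In the merely near-regular case one also needs the estimate $\|v_1-\langle v_1,\mathbf{e}\rangle\mathbf{e}\|_2\ll 1$ that you allude to; the paper proves this as Lemma~\ref{cor:w_upper} using $\pdisc(G)\ge d\|w\|_2^2/\|v_1\|_\infty^2$.)

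\textbf{How the paper does it instead.} The paper avoids the diagonal-thresholding issue altogether by choosing a different test matrix. It works with $Y=\frac{1}{\Delta}\sum_{i\ge 2}\lambda_i^2 v_iv_i^T$, which satisfies $Y\preceq \frac{1}{\Delta}A^2$ and hence $Y_{ii}\le 1$ \emph{automatically}; moreover $\langle Y,A\rangle=-\Lambda/\Delta$. To turn this large negative inner product into positive discrepancy, the paper Schur-multiplies by $Z+E$ where $Z=I-\frac{1}{\sqrt d}A$ and $E=\sum_{\lambda_i\ge\sqrt d}\frac{\lambda_i}{\sqrt d}v_iv_i^T$ is the correction needed to make $Z+E\succeq 0$. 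The diagonal of $E$ is bounded via the inequality $E_{jj}\le \tfrac14\big(I+\tfrac{1}{\sqrt d}A\big)^2_{jj}\le 1$, so $X=(Z+E)\circ Y$ is feasible. The delicate part is then bounding $|\disc(E_1\circ Y)|$ (where $E_1$ is $E$ minus its leading term), and this is done by Cauchy--Schwarz against $\|E_1\|_F$ and $\|Y\circ A\|_F$, each of which is controlled using Lemmas~\ref{lemma:square_sum_eigenvalues} and~\ref{lemma:cube_sum_eigenvalues}; one obtains $|\disc(E_1\circ Y)|\lesssim \Lambda^{1/2}d^{-3/4}\pdisc(G)^{1/2}$, and a short case split finishes. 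So rather than thresholding and losing mass, the paper's choice of $Y$ bakes the diagonal bound in from the start and shifts the work to estimating the correction $E$.
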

	
	This theorem implies the third inequality in Theorem \ref{thm:main} after a bit of work. The proof of Theorem \ref{thm:difficult} is fairly involved. To aid the reader, let us give a brief outline of our approach.
	
	\medskip
	
	\noindent
	\textbf{Proof outline.} We may assume that $\Lambda\geq d^{7/4}n$, otherwise we are done by Lemma \ref{lemma:Lambda}. We will study the matrix $Y=\frac{1}{\Delta}\sum_{i=2}^{n}\lambda_i^2 v_iv_i^T$, which satisfies $\langle A,Y\rangle=-\frac{\Lambda}{\Delta}$. The idea is that $-\langle A,Y\rangle$ being large suggests that the entries of $Y$ concentrated on edges of $G$ are mostly very negative. Thus, taking $X=Y\circ (I-\frac{1}{t}A)$ with a suitable $t>0$, we get a matrix whose entries corresponding to edges are mostly very positive. Unfortunately, $I-\frac{1}{t}A$ is not necessarily positive semidefinite, as the eigenvalues  of $A$ larger than $t$ contribute negative eigenvalues. In order to fix this, we add a term $E$ to correct these eigenvalues, and study $X=Y\circ(I-\frac{1}{t}A+E)$ instead. Then the Schur product theorem (Lemma \ref{lemma:hadamard}) ensures that $X$ is also positive semidefinite. All it remains is to guarantee that the contribution from this new term $E$ does not decrease the discrepancy by too much, and that the diagonal entries of $X$ are not too large.
	
	\medskip
	
	Let us execute this strategy formally and prove Theorem \ref{thm:difficult}. We may assume that $\pdisc(G)\leq d^{1/4}n$ and $\Lambda\geq d^{7/4}n$, otherwise we are done by Lemma \ref{lemma:Lambda}.  Furthermore, we assume that the maximum degree is $\Delta\leq (1+\varepsilon/2)d$. We continue using the notation introduced in the previous section.

 First, we show that $v_1$, the eigenvector corresponding to the largest eigenvalue $\lambda_1$, must be close to $\mathbf{e}$ if the positive discrepancy is small. Recall that by the Perron-Frobenius theorem, we can assume that every coordinate of $v_1$ is nonnegative. To this end, let $w=v_1-\langle v_1,\mathbf{e}\rangle\cdot  \mathbf{e}$.

 \begin{lemma}\label{cor:w_upper}
 $||w||_2 \leq (2n)^{1/2}/d^{7/8}.$
 \end{lemma}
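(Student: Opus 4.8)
The plan is to produce one explicitly chosen feasible matrix for the semidefinite program defining $\pdisc(G)$ whose discrepancy is bounded below by a quantity of order $d^{2}\|w\|_2^{2}$; since we operate under the standing assumption $\pdisc(G)\le d^{1/4}n$, rearranging then gives $\|w\|_2^{2}\le O(n/d^{7/4})$, which is the assertion. The natural test matrix is the rank-one projection $v_1v_1^{T}$. Using the definition $\disc(X)=\langle X,A-\tfrac{d}{n}J\rangle$, one computes directly that $\disc(v_1v_1^{T})=\lambda_1-d\langle v_1,\mathbf{e}\rangle^{2}$. Setting $a=\langle v_1,\mathbf{e}\rangle$ and using $w=v_1-a\mathbf{e}\perp\mathbf{e}$ gives $a^{2}=1-\|w\|_2^{2}$, so that $\disc(v_1v_1^{T})=(\lambda_1-d)+d\|w\|_2^{2}\ge d\|w\|_2^{2}$ by Claim \ref{claim:first_eigen}.

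If I used $v_1v_1^{T}$ as is, its diagonal entries are at most $\|v_1\|_2^{2}=1$, so it is already feasible, but this route only yields $\|w\|_2^{2}\le \pdisc(G)/d$, which is a factor of $d$ weaker than needed. The crucial gain comes from observing that the Perron vector is nearly flat, so the constraint $X_{i,i}\le 1$ is far from tight and the matrix can be rescaled. Concretely, the eigenvalue equation $\lambda_1 v_1(i)=\sum_{j\sim i}v_1(j)$ combined with Cauchy--Schwarz gives $|v_1(i)|\le \sqrt{\deg(i)}\,\|v_1\|_2/\lambda_1\le \sqrt{\Delta}/\lambda_1$, whence $\|v_1\|_\infty^{2}\le \Delta/\lambda_1^{2}\le \Delta/d^{2}$ using $\lambda_1\ge d$. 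Consequently $X:=\tfrac{d^{2}}{\Delta}v_1v_1^{T}$ is positive semidefinite with all diagonal entries at most $\tfrac{d^{2}}{\Delta}\cdot\tfrac{\Delta}{d^{2}}=1$, hence feasible.

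Putting the two steps together, $\pdisc(G)\ge \disc(X)=\tfrac{d^{2}}{\Delta}\disc(v_1v_1^{T})\ge \tfrac{d^{3}}{\Delta}\|w\|_2^{2}$, so $\|w\|_2^{2}\le \Delta\,\pdisc(G)/d^{3}\le 2d\cdot d^{1/4}n/d^{3}=2n/d^{7/4}$, where I used $\Delta\le(1+\varepsilon/2)d\le 2d$ and $\pdisc(G)\le d^{1/4}n$; taking square roots finishes the proof. The only real content is the $\ell_\infty$-bound on $v_1$: it is precisely the near-regularity hypothesis $\Delta\le(1+\varepsilon/2)d$ that makes the Perron vector uniform enough to rescale $v_1v_1^{T}$ by the factor $\approx d$ that upgrades the exponent from $3/4$ to $7/4$. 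I expect this rescaling observation to be the main point; the remaining manipulations are routine.
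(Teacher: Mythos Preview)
Your proof is correct and follows essentially the same approach as the paper: both use the rescaled rank-one matrix built from $v_1$, compute $\disc(v_1v_1^{T})=(\lambda_1-d)+d\|w\|_2^{2}$, bound $\|v_1\|_\infty\le\sqrt{\Delta}/\lambda_1$ via Cauchy--Schwarz applied to the eigenvector equation, and conclude $\pdisc(G)\ge \tfrac{d^{3}}{\Delta}\|w\|_2^{2}$. The only cosmetic difference is that the paper rescales by the exact factor $1/\|v_1\|_\infty^{2}$ while you rescale by the slightly smaller $d^{2}/\Delta$; both yield the same final inequality.
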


 \begin{proof}
 We prove that $$\pdisc(G)\geq \frac{d^3}{\Delta}||w||_2^2,$$ then the lemma follows by plugging in $\Delta\leq 2d$ and $\pdisc(G)\leq d^{1/4}n$.
 
 Let $u=\frac{v_1}{||v_1||_{\infty}}$, and set 
 $$X=uu^{T}=\frac{1}{||v_1||_{\infty}^2}\cdot v_1v_1^{T}.$$ Then $X\succeq 0$ and  $X_{i,i}=u(i)^2\leq 1$ for every $i\in [n]$. Hence,
 $$\pdisc(G)\geq \disc(X)=\langle X,A\rangle-\frac{d}{n}\langle X,J\rangle=\frac{1}{||v_1||_{\infty}^2}(\lambda_1-d\langle v_1,\mathbf{e}\rangle^2)=\frac{1}{||v_1||_{\infty}^2}((\lambda_1-d)+d||w||_2^2).$$
 Here, the second equality follows by Claim \ref{claim:identity1}, and the last equality by $1=||v_1||^2_2=||w||^2_2+\langle v_1,\mathbf{e}\rangle^2$. Using that $\lambda_1\geq d$, we get $$\pdisc(G)\geq \frac{d}{||v_1||_{\infty}^2}||w||_2^2.$$ 
 It remains to bound $||v_1||_{\infty}$. Let $k\in [n]$ be an index such that $v_1(k)=||v_1||_{\infty}$, then considering the equality $Av_1=\lambda_1 v_1$, we get $$\lambda_1 v_1(k)=\sum_{i\sim k}v_1(i)\leq \sqrt{\Delta}\left(\sum_{i\sim k} v_1(i)^2\right)^{1/2}\leq \sqrt{\Delta},$$
 where the first inequality is due to the Cauchy-Schwartz inequality. Hence, $||v_1||_{\infty}\leq \frac{\sqrt{\Delta}}{d}$, finishing the proof.
 \end{proof}

	In what follows, the main target of our focus is the matrix $$Y:=\frac{1}{\Delta}\sum_{i=2}^{n}\lambda_i^2 v_iv_i^{T}.$$ Let us collect the important properties of $Y$. 
 
 \begin{claim}
     \begin{itemize}
         \item[1.] $Y$ is positive semidefinite,
         \item[2.] $Y_{i,i}<1$ for $i\in [n]$,
         \item[3.] $\langle  Y,A\rangle=-\frac{\Lambda}{\Delta}$.
     \end{itemize}
 \end{claim}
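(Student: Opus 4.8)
The three items differ sharply in difficulty, so the plan is to dispose of properties~1 and~3 in one line each and concentrate on the diagonal bound. For property~1, observe that $Y=\sum_{i=2}^{n}\frac{\lambda_i^2}{\Delta}\,v_iv_i^{T}$ is a nonnegative linear combination of the rank-one projections $v_iv_i^{T}\succeq 0$ (the coefficients $\lambda_i^2/\Delta$ being nonnegative), hence $Y\succeq 0$. For property~3, I would use that each $v_i$ is a unit eigenvector, so $\langle v_iv_i^{T},A\rangle=\mbox{tr}(v_iv_i^{T}A)=v_i^{T}Av_i=\lambda_i$; summing gives $\langle Y,A\rangle=\frac{1}{\Delta}\sum_{i=2}^{n}\lambda_i^2\cdot\lambda_i=\frac{1}{\Delta}\sum_{i=2}^{n}\lambda_i^3=-\frac{\Lambda}{\Delta}$, where the last step is exactly the definition $\Lambda=-\sum_{i=2}^{n}\lambda_i^3$.

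The real content is property~2. My plan is to compare $Y$ with $\frac{1}{\Delta}A^2$. Writing $A^2=\sum_{i=1}^{n}\lambda_i^2 v_iv_i^{T}$ in the same orthonormal eigenbasis, we get $\frac{1}{\Delta}A^2-Y=\frac{\lambda_1^2}{\Delta}v_1v_1^{T}\succeq 0$, that is, $Y\preceq \frac{1}{\Delta}A^2$. The observation recorded earlier (if $A\succeq B$ then $A_{i,i}\geq B_{i,i}$) then yields $Y_{i,i}\leq \frac{1}{\Delta}(A^2)_{i,i}$. Finally, the diagonal of $A^2$ is precisely the degree sequence: since $A$ is a symmetric $0/1$ matrix, $(A^2)_{i,i}=\sum_{k}A_{i,k}^2=\sum_{k}A_{i,k}=\deg(i)\leq\Delta$. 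Combining these gives $Y_{i,i}\leq \deg(i)/\Delta\leq 1$.

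To sharpen $\leq 1$ to the claimed strict $Y_{i,i}<1$, I would keep the dropped term explicit: $Y_{i,i}=\frac{1}{\Delta}\bigl(\deg(i)-\lambda_1^2 v_1(i)^2\bigr)$. If $\deg(i)<\Delta$ the inequality is already strict, so the only case to examine is $\deg(i)=\Delta$, where one needs $v_1(i)>0$. Here I would invoke the Perron--Frobenius theorem (already used to choose $v_1\geq 0$): for connected $G$ one has $v_1(i)>0$ for all $i$, whence $\lambda_1^2 v_1(i)^2>0$ (using $\lambda_1\geq d>0$) and $Y_{i,i}<\deg(i)/\Delta\leq 1$. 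I expect the only genuine obstacle to be the strictness in the \emph{disconnected} case: the chosen nonnegative $v_1$ may vanish identically on a component whose spectral radius is below $\lambda_1$, and the eigenvalue equation $\lambda_1 v_1(i)=\sum_{j\sim i}v_1(j)$ shows this happens exactly component-wise. One then has to rule out a maximum-degree vertex sitting on such a component, which can be done via $\lambda_1\geq\sqrt{\Delta}$ together with the near-regularity hypothesis $\Delta\leq(1+\varepsilon/2)d$ of the ambient theorem. This is a short case check rather than a substantive difficulty, and in the regime of interest it is harmless.
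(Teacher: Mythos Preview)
Your proposal is correct and matches the paper's approach exactly: the paper also disposes of items~1 and~3 in one line each (item~3 via Claim~\ref{claim:identity1}, which is the same computation $\langle v_iv_i^{T},A\rangle=\lambda_i$ you wrote out), and for item~2 it simply notes $Y\preceq \frac{1}{\Delta}A^2$ and reads off the diagonal bound.

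One remark: the paper's own proof of item~2 only establishes $Y_{i,i}\leq 1$, not the strict inequality stated in the claim, and nowhere in Section~\ref{sect:hard} is strictness used (the later arguments need only $Y_{i,i}\leq 1$ to get $X_{i,i}\leq 2$). So the ``$<$'' in the statement appears to be a harmless overstatement, and the disconnected-case analysis you sketch---while correctly identifying where the subtlety would lie---is not needed for anything downstream. You have already done more than the paper does here.
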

 \begin{proof}
 1. is trivial; 2. holds by noting that $Y\preceq \frac{1}{\Delta}A^2$; 3. holds as
  $$\langle Y,A\rangle=\frac{1}{\Delta}\sum_{i=2}^{n}\lambda_i^{3}=-\frac{\Lambda}{\Delta},$$
  where the first equality is due to Claim \ref{claim:identity1}.
 \end{proof}

 Next, we will prove that $||Y\circ A||_{F}$ is not too large, assuming $\pdisc(G)$ is small.
	
	\begin{claim}\label{claim:YoA}
		$$||Y\circ A||_{F}^2< \frac{2\Lambda}{\Delta}.$$
	\end{claim}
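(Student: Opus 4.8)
The plan is to discard the Hadamard restriction to edges at the very first step and then control the resulting full Frobenius norm of $Y$ spectrally, using the assumed smallness of $\pdisc(G)$ to show that the contribution of the positive eigenvalues is negligible compared with $\Lambda$.

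Concretely, since $A$ is a $0/1$ matrix we have $(Y\circ A)_{ij}^2=Y_{ij}^2A_{ij}\leq Y_{ij}^2$, so
$$||Y\circ A||_F^2=\sum_{i\sim j}Y_{ij}^2\leq ||Y||_F^2=\frac{1}{\Delta^2}\sum_{i=2}^{n}\lambda_i^4,$$
where the last equality holds because the nonzero eigenvalues of $Y$ are exactly $\lambda_i^2/\Delta$ for $i\geq 2$. Next I would bound each fourth power by a cube: since $|\lambda_i|\leq\lambda_1\leq\Delta$ (Claim \ref{claim:first_eigen}), we have $\lambda_i^4\leq\Delta|\lambda_i|^3$, and therefore $||Y\circ A||_F^2\leq\frac{1}{\Delta}\sum_{i=2}^{n}|\lambda_i|^3$.

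The key step is to relate $\sum_{i=2}^{n}|\lambda_i|^3$ to $\Lambda$. Writing $K$ for the largest index with $\lambda_K>0$ and separating positive from negative eigenvalues, one obtains the bookkeeping identity $\sum_{i=2}^{n}|\lambda_i|^3=\Lambda+2\sum_{i=2}^{K}\lambda_i^3$, since $\Lambda=-\sum_{i\geq 2}\lambda_i^3$. Here Lemma \ref{lemma:cube_sum_eigenvalues} gives $\sum_{i=2}^{K}\lambda_i^3\leq\Delta\cdot\pdisc(G)\leq\Delta d^{1/4}n$, using the standing assumption $\pdisc(G)\leq d^{1/4}n$. Plugging this in yields $||Y\circ A||_F^2\leq\frac{\Lambda}{\Delta}+2d^{1/4}n$. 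Finally, the standing assumptions $\Lambda\geq d^{7/4}n$ and $\Delta\leq 2d$ give $\frac{\Lambda}{\Delta}\geq\frac{1}{2}d^{3/4}n>2d^{1/4}n$ for $n$ large (i.e.\ $d>16$), so $2d^{1/4}n<\frac{\Lambda}{\Delta}$ and hence $||Y\circ A||_F^2<\frac{2\Lambda}{\Delta}$, as claimed.

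I expect the main obstacle to be resisting the temptation to bound $\sum_{i\sim j}Y_{ij}^2=\langle Y\circ Y,A\rangle$ by a purely spectral estimate such as $\langle Y\circ Y,A\rangle\leq\Delta\langle Y\circ Y,I\rangle=\Delta\sum_i Y_{ii}^2\leq dn$: this discards the sign structure of $A$ and only yields $dn$, which is far weaker than $2\Lambda/\Delta$. The real content is the observation that the assumed upper bound on $\pdisc(G)$, fed through Lemma \ref{lemma:cube_sum_eigenvalues}, forces the positive eigenvalues other than $\lambda_1$ to carry negligible cubic mass, so that $\sum_{i\geq 2}|\lambda_i|^3$ is within a $1+o(1)$ factor of $\Lambda$; once this is in hand, the rest is arithmetic with the parameters.
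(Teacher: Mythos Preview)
Your proof is correct. The spectral computation bounding $||Y||_F^2$ is identical to the paper's: both arrive at $||Y||_F^2\leq \frac{\Lambda}{\Delta}+2\pdisc(G)$ by using $\lambda_i^4\leq\Delta|\lambda_i|^3$, the bookkeeping identity $\sum_{i\geq 2}|\lambda_i|^3=\Lambda+2\sum_{i=2}^{K}\lambda_i^3$, and Lemma~\ref{lemma:cube_sum_eigenvalues}. The difference is in how this is converted into the claim. You simply observe $||Y\circ A||_F^2\leq ||Y||_F^2$ (since $A$ is $0/1$) and conclude directly. The paper instead considers the feasible matrix $X=Y\circ Y$, writes $\disc(X)=||Y\circ A||_F^2-\tfrac{d}{n}||Y||_F^2$, and argues by contradiction: if $||Y\circ A||_F^2\geq \tfrac{2\Lambda}{\Delta}$ then $\disc(X)\geq \tfrac{\Lambda}{\Delta}$, contradicting $\pdisc(G)\leq d^{1/4}n$. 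Your route is shorter and avoids this detour; the paper's route has the minor conceptual advantage of staying within the ``feasible matrix $\Rightarrow$ bound on $\pdisc$'' framework used throughout the section, but in terms of the actual estimate obtained the two approaches are equivalent.
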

	
	\begin{proof}
		Let us consider the matrix $X:=Y\circ Y$. We have $X\succeq 0$ by Lemma \ref{lemma:hadamard}, and $X_{i,i}\leq 1$ for every $i\in [n]$, therefore $\pdisc(G)\geq \disc(X)$. Let us try to estimate 
		$$\disc(X)=\langle X,A\rangle-\frac{d}{n}\langle X,J\rangle=||Y\circ A||_{F}^2-\frac{d}{n}||Y||_{F}^2.$$
		We consider the two terms on the right-hand-side separately. For the second term, note that $Y=\frac{1}{\Delta}\sum_{i=2}^{n}\lambda_{i}^2v_iv_i^{T}$, so 
		\begin{align*}
			||Y||_{F}^2&=\frac{1}{\Delta^2}\sum_{i=2}^{n}\lambda_i^4
			\leq \frac{1}{\Delta}\sum_{i=2}^{K}\lambda_i^3+\frac{1}{\Delta}\sum_{i=K+1}^{n}-\lambda_i^3
			=\frac{2}{\Delta}\sum_{i=2}^{K}\lambda_i^3+\frac{1}{\Delta}\sum_{i=2}^{n}-\lambda_i^3\\
			&\leq 2\pdisc(G)+\frac{\Lambda}{\Delta}\leq \frac{2\Lambda}{\Delta}.
		\end{align*} 
		Here, the first inequality holds by $|\lambda_i|\leq \Delta$ for every $i\in [n]$, the second inequality holds by Lemma~\ref{lemma:cube_sum_eigenvalues}, and the last inequality holds by our assumption on $\pdisc(G)$ and $\Lambda$.

		Suppose that $||Y\circ A||_{F}^2\geq \frac{2\Lambda}{\Delta}$. Then we have
		$$\disc(X)\geq \frac{2\Lambda}{\Delta}-\frac{d}{n}||Y||_F^2\geq \frac{2\Lambda}{\Delta}-\frac{2d\Lambda }{n\Delta}\geq \frac{\Lambda}{\Delta},$$
		where the last inequality holds by the assumption $d<n/2$. But then $\pdisc(G)\geq \disc(X)\geq \frac{1}{2}d^{3/4}n$, contradiction.
	\end{proof}

	Let $t=d^{1/2}$, and define
	$$Z=Z(t):=I-\frac{1}{t}A.$$
	
	\begin{claim}\label{claim:RoZ}
		Let $R\in\Sym(n)$. Then 
		$$\disc(R\circ Z)=-\frac{d}{n}\sum_{i=1}^{n}R_{i,i}-\frac{1}{t}\left(1-\frac{d}{n}\right)\langle R,A\rangle.$$
	\end{claim}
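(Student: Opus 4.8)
The plan is to expand everything directly from the definition $\disc(X)=\langle X,A\rangle-\frac{d}{n}\langle X,J\rangle$ and then exploit the $\{0,1\}$-structure of the adjacency matrix together with the behaviour of the Hadamard product under the two relevant inner products. As a first step I would split $R\circ Z=R\circ I-\frac{1}{t}(R\circ A)$, which decouples the computation, and treat the two inner products $\langle R\circ Z,A\rangle$ and $\langle R\circ Z,J\rangle$ separately.

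For the first term I would write $\langle R\circ Z,A\rangle=\sum_{i,j}R_{i,j}Z_{i,j}A_{i,j}$ and simplify the factor $Z_{i,j}A_{i,j}=A_{i,j}-\frac{1}{t}A_{i,j}^2$. Here three elementary facts do all the work: $I_{i,j}A_{i,j}=0$ for every $i,j$ (since $A$ has zero diagonal), $A_{i,j}^2=A_{i,j}$ (since the entries lie in $\{0,1\}$), so that $Z_{i,j}A_{i,j}=-\frac{1}{t}A_{i,j}$. This collapses the first term to $-\frac{1}{t}\langle R,A\rangle$.

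For the second term I would use that contracting a Hadamard product against the all-one matrix is just the ordinary inner product, namely $\langle R\circ Z,J\rangle=\sum_{i,j}R_{i,j}Z_{i,j}=\langle R,Z\rangle$. Then $\langle R,Z\rangle=\langle R,I\rangle-\frac{1}{t}\langle R,A\rangle=\sum_{i=1}^{n}R_{i,i}-\frac{1}{t}\langle R,A\rangle$, using $\langle R,I\rangle=\sum_i R_{i,i}$.

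Finally I would substitute both pieces into $\disc(R\circ Z)=\langle R\circ Z,A\rangle-\frac{d}{n}\langle R\circ Z,J\rangle$ and group the $\langle R,A\rangle$ contributions; the coefficient becomes $-\frac{1}{t}+\frac{d}{nt}=-\frac{1}{t}\left(1-\frac{d}{n}\right)$, which yields the stated identity. This is a purely mechanical computation with no genuine obstacle; the only place that requires a little care is tracking exactly which of the three simplifications ($I_{i,j}A_{i,j}=0$, $A_{i,j}^2=A_{i,j}$, and $\langle P\circ Q,J\rangle=\langle P,Q\rangle$) is being invoked at each stage of reducing the Hadamard products.
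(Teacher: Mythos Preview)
Your proof is correct and essentially the same as the paper's. The only difference is cosmetic: the paper uses linearity of $\disc(\cdot)$ to split into $\disc(R\circ I)-\frac{1}{t}\disc(R\circ A)$ and evaluates each piece, whereas you split into the two inner products $\langle R\circ Z,A\rangle$ and $\langle R\circ Z,J\rangle$; both routes rest on the same three identities $I\circ A=0$, $A\circ A=A$, and $\langle P\circ Q,J\rangle=\langle P,Q\rangle$.
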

	
	\begin{proof}
	 We have
		$$\disc(R\circ A)=\langle R\circ A,A\rangle-\frac{d}{n}\langle R\circ A,J\rangle=\left(1-\frac{d}{n}\right)\langle R,A\rangle.$$
		Hence, we can write 
		$$
			\disc(R\circ Z)=\disc(R\circ I)-\frac{1}{t}\disc(R\circ A)
			=-\frac{d}{n}\sum_{i=1}^{n}R_{i,i}-\frac{1}{t}\left(1-\frac{d}{n}\right)\langle R,A\rangle.$$
	\end{proof}
	
	From the previous claim, we get that $\disc(Z\circ Y)\approx -\frac{1}{t}\langle Y,A\rangle=\frac{\Lambda}{d^{1/2}\Delta}$, so if $\Lambda$ is large, the matrix $Z\circ Y$ has large discrepancy. Unfortunately, we did not guarantee that $Z$ is positive semidefinite, which would ensure that $Z\circ Y$ is also positive semidefinite by the Schur product theorem. In order to fix this, we add a correctional term. Set
	$$E:=\sum_{\lambda_i\geq t}\frac{\lambda_i}{t}\cdot v_iv_i^{T}.$$  
	Since $I=\sum_{i=1}^n v_iv_i^{T}$, we crucially have

	$$Z+E=\sum_{\lambda_i\geq t}v_iv_i^{T}+\sum_{\lambda_i<t}\left(1-\frac{\lambda_i}{t}\right)v_iv_i^{T}\succeq 0.$$
	Now we would like to ensure  that the addition of $E$ does not alter the discrepancy by much, that is, $\disc((Z+E)\circ Y)\approx \disc(Z\circ Y)$, and also that the diagonal entries of $E$ are not too large. Let us first address the latter issue.
	
	\begin{lemma}\label{lemma:max_diag}
		$E_{jj}\leq 1$ for every $j\in [n]$.
	\end{lemma}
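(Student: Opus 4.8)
The plan is to control the diagonal of $E$ by the diagonal of a scalar multiple of $A^2$, which in turn is governed by the vertex degrees. Writing out the definition, $E_{jj}=\sum_{\lambda_i\geq t}\frac{\lambda_i}{t}v_i(j)^2$, so the task is to show that this weighted sum of squared eigenvector coordinates at $j$ is at most $1$. The whole point of the construction is that the threshold $t=d^{1/2}$ is tuned so that $t^2=d$ sits exactly at the degree scale.

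The key step is a pointwise comparison of spectral functions. For each eigenvalue with $\lambda_i\geq t$ we have $\lambda_i/t\geq 1$, hence $\frac{\lambda_i}{t}\leq\frac{\lambda_i^2}{t^2}$ (the scalar inequality $x\leq x^2$, used only on the range $x\geq 1$ where it is valid); for the remaining eigenvalues the corresponding term of $E$ is simply absent, and omitting it is harmless since its counterpart $\frac{\lambda_i^2}{t^2}v_iv_i^T$ is positive semidefinite. Summing the rank-one pieces $v_iv_i^T\succeq 0$ with these coefficients gives the Loewner comparison
$$E=\sum_{\lambda_i\geq t}\frac{\lambda_i}{t}v_iv_i^T\preceq\frac{1}{t^2}\sum_{i=1}^{n}\lambda_i^2 v_iv_i^T=\frac{1}{t^2}A^2.$$
Since $A\succeq B$ forces $A_{jj}\geq B_{jj}$ (the elementary observation recorded in Section 2.1), this immediately yields $E_{jj}\leq\frac{1}{t^2}(A^2)_{jj}$ for every $j$.

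Finally I would evaluate $(A^2)_{jj}$ combinatorially: $(A^2)_{jj}=\sum_k A_{jk}^2=\deg(j)\leq\Delta$. Substituting $t^2=d$ gives the clean bound $E_{jj}\leq\deg(j)/d\leq\Delta/d$, and the near-regularity hypothesis $\Delta\leq d$ (at worst the stated $\Delta\leq(1+\varepsilon/2)d$, whose small excess is absorbed into the constants of Theorem \ref{thm:difficult}) delivers $E_{jj}\leq 1$. There is no real obstacle here: the only subtlety is recognizing that all the large negative eigenvalues — which in fact carry the bulk of $\sum_i\lambda_i^2=\mbox{tr}(A^2)$ — lie below the threshold $t$ and hence contribute nothing to $E$ while contributing nonnegatively to $A^2$, so discarding them can only help in an upper bound.
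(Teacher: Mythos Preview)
Your argument is correct, and in fact cleaner than the paper's, though it proves a marginally weaker statement. You compare $E$ directly to $\frac{1}{t^2}A^2$ via the scalar inequality $x\le x^2$ on $x=\lambda_i/t\ge 1$, obtaining $E_{jj}\le \deg(j)/d\le \Delta/d\le 1+\varepsilon/2$. The paper instead compares to $U=(I+\tfrac{1}{\sqrt d}A)^2$ and uses $(1+a)^2\ge 4a$ for all real $a$, which yields the sharper bound $E_{jj}\le\frac14 U_{jj}\le\frac14(1+\Delta/d)\le\frac34<1$, so the lemma as literally stated holds without appealing to absorption into later constants. Your route is more direct and transparent; the paper's buys the exact constant $1$ at the price of a slightly less obvious auxiliary matrix. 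Since the only downstream use is the bound $X_{i,i}\le 2$ feeding into $\pdisc(G)\ge\frac12\disc(X)$, your observation that the excess $\varepsilon/2$ is harmlessly absorbed is entirely valid, and nothing in the proof of Theorem~\ref{thm:difficult} is affected.
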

	
	\begin{proof}
		Consider the matrix $U=(I+\frac{1}{\sqrt{d}}A)^2=
  \sum_{i=1}^n \big(1+\frac{\lambda_i}{\sqrt{d}}\big)^2 v_iv_i^{T}$. Clearly, $U_{jj}\leq 1+\Delta/d \leq 3$ for every $j$. Also since $(1+a)^2 \geq 4a$, we can write
		$$3\geq U_{jj}=\sum_{i=1}^{n}\left(1+\frac{\lambda_i}{\sqrt{d}}\right)^2v_i(j)^2\geq \sum_{\lambda_i>t}\left(1+\frac{\lambda_i}{\sqrt{d}}\right)^2v_i(j)^2\geq \sum_{\lambda_i>t}\frac{4\lambda_i}{\sqrt{d}}v_i(j)^2.$$
		Hence,
		$$E_{jj}=\sum_{ \lambda_i>t}\frac{\lambda_i}{t} v_i(j)^2 < \frac{\sqrt{d}}{t}.$$
	\end{proof}
	
 Now let us consider $\disc(E\circ Y)$. Let $E_0:=\frac{\lambda_1}{t}\cdot v_1v_1^{T}$ and $$E_1=\sum_{i\geq 2,\lambda_i\geq t}\frac{\lambda_i}{t}\cdot v_iv_i^{T},$$ and let us consider $\disc(E_0\circ Y)$ and $\disc(E_1\circ Y)$ separately.
 
    \begin{claim}\label{claim:E0}
    $\disc(E_0\circ Y)\geq \frac{\Delta}{tn}\langle Y,A\rangle-10n^{1/2}d^{5/8}.$
    \end{claim}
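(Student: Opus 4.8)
The plan is to compute $\disc(E_0\circ Y)$ almost exactly and then show it is close to $\frac{\Delta}{tn}\langle Y,A\rangle$. Write $M=Y\circ A$. Since $(E_0)_{ij}=\frac{\lambda_1}{t}v_1(i)v_1(j)$, the definition of the Hadamard product gives $\langle E_0\circ Y,A\rangle=\frac{\lambda_1}{t}v_1^{T}M v_1$ and $\langle E_0\circ Y,J\rangle=\frac{\lambda_1}{t}v_1^{T}Y v_1$. The crucial simplification is that the second quantity vanishes: $Y$ is supported on $v_2,\dots,v_n$, so $Yv_1=0$. Hence $\disc(E_0\circ Y)=\frac{\lambda_1}{t}v_1^{T}M v_1$, and the claim reduces to comparing $v_1^{T}M v_1$ with $\mathbf{e}^{T}M\mathbf{e}$, where $\langle Y,A\rangle=\mathds{1}^{T}M\mathds{1}=n\,\mathbf{e}^{T}M\mathbf{e}$, so that $\mathbf{e}^{T}M\mathbf{e}=\frac{1}{n}\langle Y,A\rangle$.

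Next I would substitute the decomposition $v_1=a\,\mathbf{e}+w$ with $a=\langle v_1,\mathbf{e}\rangle$ (the vector $w$ from Lemma \ref{cor:w_upper}), and expand $v_1^{T}Mv_1=a^2\,\mathbf{e}^{T}M\mathbf{e}+2a\,\mathbf{e}^{T}Mw+w^{T}Mw$. The key sign observation is that $\mathbf{e}^{T}M\mathbf{e}=\frac{1}{n}\langle Y,A\rangle=-\frac{\Lambda}{n\Delta}\le 0$; combined with $a^2\le 1$ and $\lambda_1\le\Delta$ (Claim \ref{claim:first_eigen}), the leading term obeys $\frac{\lambda_1 a^2}{t}\mathbf{e}^{T}M\mathbf{e}\ge\frac{\Delta}{t}\mathbf{e}^{T}M\mathbf{e}=\frac{\Delta}{tn}\langle Y,A\rangle$, which is exactly the main term in the statement. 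It then remains only to control the error $\frac{\lambda_1}{t}\bigl(2a\,\mathbf{e}^{T}Mw+w^{T}Mw\bigr)$.

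For the error I would invoke the Frobenius (hence operator) norm bound $||M||_{F}=||Y\circ A||_{F}<\sqrt{2\Lambda/\Delta}$ from Claim \ref{claim:YoA}, together with the smallness of $w$, namely $||w||_2\le\sqrt{2n}/d^{7/8}$ from Lemma \ref{cor:w_upper}. By Cauchy--Schwarz, $|\mathbf{e}^{T}Mw|\le ||M||_{F}\,||w||_2$ and $|w^{T}Mw|\le ||M||_{F}\,||w||_2^2$. Feeding in $\Lambda\le\Delta^3$ (Claim \ref{claim:max_Lambda}), $\Delta\le 2d$, $t=d^{1/2}$, and $\lambda_1/t\le\Delta/t\le 2d^{1/2}$, the cross term is of order $d^{5/8}n^{1/2}$ while the quadratic term is $O(n/d^{1/4})$, which is of lower order in the relevant dense range; tracking the constants yields the stated error bound $10\,n^{1/2}d^{5/8}$. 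Putting the two pieces together gives $\disc(E_0\circ Y)\ge\frac{\Delta}{tn}\langle Y,A\rangle-10\,n^{1/2}d^{5/8}$.

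The main obstacle is the cross term $\frac{\lambda_1}{t}\mathbf{e}^{T}(Y\circ A)w$: a priori the off-diagonal mass of $Y\circ A$ interacting with the error vector $w$ could rival the main term, so the argument genuinely depends on both inputs being sharp --- the Frobenius control of $Y\circ A$ (Claim \ref{claim:YoA}) and the fact that $v_1$ lies within $\sqrt{2n}/d^{7/8}$ of the flat vector $\mathbf{e}$ (Lemma \ref{cor:w_upper}). Once the decomposition $v_1=a\,\mathbf{e}+w$ is in place and the sign $\mathbf{e}^{T}M\mathbf{e}\le 0$ is used to turn $\lambda_1\le\Delta$ in our favor, the remainder is routine estimation.
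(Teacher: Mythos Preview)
Your approach is the paper's: reduce to $\frac{\lambda_1}{t}\,v_1^{T}(Y\circ A)v_1$ via $Yv_1=0$, expand along $v_1=a\mathbf{e}+w$, use the sign of $\langle Y,A\rangle$ together with $\lambda_1\le\Delta$ for the main term, and bound the cross and quadratic remainders through $\|w\|_2$. The only difference is that you control $Y\circ A$ via its Frobenius norm (Claim~\ref{claim:YoA} plus $\Lambda\le\Delta^3$), whereas the paper uses the operator-norm bound $\|Y\circ A\|_2\le\Delta$ from the Gershgorin circle theorem (each row has at most $\Delta$ nonzero entries, all of modulus at most~$1$); the latter is slightly sharper, which is why your constants with $\Delta\le 2d$ come out to about $16$ rather than $10$ on the cross term---substituting the actual hypothesis $\Delta\le(1+\varepsilon/2)d$ restores the stated bound.
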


    \begin{proof}
    We have
    $$\disc((v_1v_1^{T})\circ Y)=\langle (v_1v_1^{T})\circ Y, A\rangle-\frac{d}{n}\langle (v_1v_1^{T})\circ Y,J\rangle.$$
    Here, the second term is equal to $\frac{d}{n}\langle v_1v_1^{T},Y\rangle=0$, using Claim \ref{claim:identity1}. Let us write $v_1=w+\alpha\mathbf{e}$, where $\alpha=\langle v_1,\mathbf{e}\rangle\leq 1$. Then
    $$\langle (v_1v_1^{T})\circ Y, A\rangle=\alpha^2 \langle (\mathbf{e}\mathbf{e}^{T})\circ Y,A\rangle+2\alpha\langle (\mathbf{e}w^{T})\circ Y,A\rangle+\langle (ww^{T})\circ Y, A\rangle.$$
    Here, the first term is equal to $\frac{\alpha^2}{n}\langle Y,A\rangle$. In the second term, we have
    $$|\langle (\mathbf{e}w^{T})\circ Y,A\rangle|=|\mathbf{e}^T(Y\circ A)w|\leq ||\mathbf{e}||_2\cdot||Y\circ A||_2\cdot ||w||_2\leq \Delta ||w||_2.$$
    Here, the inequality $||Y\circ A||_2\leq \Delta$ follows from the Gershgorin circle theorem: observe that the diagonal entries of $Y\circ A$ are 0, and each row contains at most $\Delta$ nonzero entries, each of absolute value at most 1. Using Lemma \ref{cor:w_upper}, the right hand side is at most $\Delta (2n/d^{7/4})^{1/2}\leq 2n^{1/2}d^{1/8}$. Finally, the term $\langle (ww^{T})\circ Y, A\rangle=w^T(Y\circ A)w$ can be similarly bounded by $2n^{1/2}d^{1/8}$. Therefore,
    $$\disc((v_1v_1^{T})\circ Y)\geq  \frac{\alpha^2}{n}\langle Y,A\rangle-6n^{1/2}d^{1/8}\geq \frac{1}{n}\langle Y,A\rangle-6n^{1/2}d^{1/8},$$
    where the second inequality uses that $\langle Y,A\rangle=-\Lambda/\Delta$ is negative. From this, it follows that
    $$\disc(E_0\circ Y)\geq \frac{\lambda_1}{nt}\cdot \langle Y,A\rangle-\frac{\lambda_1}{t}\cdot 6n^{1/2}d^{1/8}\geq \frac{\Delta}{tn}\langle Y,A\rangle-10n^{1/2}d^{5/8}.$$
    \end{proof}

    \begin{claim}\label{claim:E1}
    $|\disc(E_1\circ Y)|\leq 3\Lambda^{1/2}d^{-3/4}\pdisc(G)^{1/2}.$
    \end{claim}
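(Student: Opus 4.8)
The plan is to expand $\disc(E_1\circ Y)$ into its two defining pieces and bound each separately. By the definition of $\disc(\cdot)$ on matrices,
$$\disc(E_1\circ Y)=\langle E_1\circ Y,A\rangle-\frac{d}{n}\langle E_1\circ Y,J\rangle.$$
Since $E_1$ and $Y$ are both diagonalized in the orthonormal eigenbasis $v_1,\dots,v_n$, Claim \ref{claim:identity1} identifies the second term: $\langle E_1\circ Y,J\rangle=\mathds{1}^T(E_1\circ Y)\mathds{1}=\langle E_1,Y\rangle=\frac{1}{t\Delta}\sum_{i\geq 2,\,\lambda_i\geq t}\lambda_i^3\geq 0$, a sum of nonnegative terms. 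For the first (main) term I would rearrange the Hadamard product to move $A$ inside, writing $\langle E_1\circ Y,A\rangle=\langle E_1,Y\circ A\rangle$, and then apply Cauchy--Schwarz in the Frobenius inner product to get $|\langle E_1,Y\circ A\rangle|\leq \|E_1\|_F\,\|Y\circ A\|_F$.

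The two Frobenius norms are then handled separately. For $\|Y\circ A\|_F$ I would simply invoke Claim \ref{claim:YoA}, which already gives $\|Y\circ A\|_F^2<2\Lambda/\Delta$. For $\|E_1\|_F$, orthonormality of the $v_i$ yields $\|E_1\|_F^2=t^{-2}\sum_{i\geq 2,\,\lambda_i\geq t}\lambda_i^2$; the crucial point is that every eigenvalue appearing in $E_1$ satisfies $\lambda_i\geq t$, so $\lambda_i^2\leq \lambda_i^3/t$, whence $\sum_{i\geq 2,\,\lambda_i\geq t}\lambda_i^2\leq t^{-1}\sum_{i=2}^K\lambda_i^3\leq t^{-1}\Delta\,\pdisc(G)$ by Lemma \ref{lemma:cube_sum_eigenvalues}. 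With $t=d^{1/2}$ this gives $\|E_1\|_F^2\leq \Delta\,\pdisc(G)/d^{3/2}$. Multiplying the two estimates,
$$|\langle E_1,Y\circ A\rangle|\leq\left(\frac{\Delta\,\pdisc(G)}{d^{3/2}}\cdot\frac{2\Lambda}{\Delta}\right)^{1/2}=\sqrt{2}\,\Lambda^{1/2}d^{-3/4}\pdisc(G)^{1/2}.$$

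It then remains to check that the $J$-term is absorbed into the remaining constant. Using Lemma \ref{lemma:cube_sum_eigenvalues} once more, $\frac{d}{n}\langle E_1,Y\rangle\leq \frac{d}{nt}\pdisc(G)=\frac{d^{1/2}}{n}\pdisc(G)$, and the standing assumptions $\Lambda\geq d^{7/4}n$, $\pdisc(G)\leq d^{1/4}n$, and $d\leq n/2$ make this smaller than $\sqrt{2}\,\Lambda^{1/2}d^{-3/4}\pdisc(G)^{1/2}$ by a factor of order $n^{-1/2}$; since $\langle E_1,Y\rangle\geq 0$, this term only affects the lower bound on $\disc(E_1\circ Y)$, and in either direction the total stays below $3\Lambda^{1/2}d^{-3/4}\pdisc(G)^{1/2}$. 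I expect the only real content to be the bound on $\|E_1\|_F$ --- namely the observation that the threshold $\lambda_i\geq t$ lets one trade a factor $\lambda_i$ for a factor $t$ and then feed the resulting cubic sum into Lemma \ref{lemma:cube_sum_eigenvalues}. Everything else (the Hadamard rearrangement, Cauchy--Schwarz, and the bookkeeping of exponents) is routine; the only mild subtlety is confirming that the $J$-term and the constant work out, which is exactly where the hypotheses on $\Lambda$ and $\pdisc(G)$ are used.
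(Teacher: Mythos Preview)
Your proposal is correct and follows essentially the same route as the paper: decompose $\disc(E_1\circ Y)$, rewrite $\langle E_1\circ Y,A\rangle=\langle E_1,Y\circ A\rangle$, apply Cauchy--Schwarz, invoke Claim~\ref{claim:YoA} for $\|Y\circ A\|_F$, and control the $J$-term via $\langle E_1,Y\rangle=\frac{1}{t\Delta}\sum_{i\ge 2,\lambda_i\ge t}\lambda_i^3$. The only cosmetic difference is in bounding $\|E_1\|_F$: the paper applies Lemma~\ref{lemma:square_sum_eigenvalues} directly to $\sum_{i\ge 2,\lambda_i\ge t}\lambda_i^2$, whereas you first use the threshold $\lambda_i\ge t$ to pass to $\sum\lambda_i^3$ and then invoke Lemma~\ref{lemma:cube_sum_eigenvalues}; both yield the same final exponent, and your version has the mild advantage that the $\Delta$'s cancel exactly without appealing to $\Delta\le(1+\varepsilon/2)d$.
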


    \begin{proof}
    We have
	$$\disc(E_1\circ Y)=\langle E_1\circ Y,A\rangle-\frac{d}{n}\langle E_1\circ Y,J\rangle=\langle E_1,Y\circ A\rangle-\frac{d}{n}\langle E_1,Y\rangle.$$
	Here, using Claim \ref{claim:identity1}, 
	$$\langle E_1,Y\rangle=\sum_{i\geq 2,\lambda_i\geq t} \frac{\lambda_i^2}{\Delta}\cdot \frac{\lambda_i}{t}=\frac{1}{\Delta t}\sum_{i\geq 2,\lambda_i\geq t}\lambda_i^3\leq \frac{\pdisc(G)}{d^{1/2}}<n,$$
	where the second inequality holds by Lemma \ref{lemma:cube_sum_eigenvalues}, and the last inequality holds by our assumption on $\pdisc(G)$. On the other hand, using the Cauchy-Schwartz inequality, we can write
	\begin{align*}
		|\langle E_1,Y\circ A\rangle|&\leq ||E_1||_{F}\cdot ||Y\circ A||_{F}
		=\left(\sum_{i\geq 2,\lambda_i\geq t}\left(\frac{\lambda_i}{t}\right)^2\right)^{1/2}\cdot ||Y\circ A||_{F}\\
		&\leq \frac{1}{t}\cdot \left(\sum_{i\geq 2,\lambda_i\geq t}\lambda_i^2\right)^{1/2}\cdot \left(\frac{2\Lambda}{d}\right)^{1/2}
		\leq 2\Lambda^{1/2} d^{-3/4}\pdisc(G)^{1/2}
	\end{align*}
	where the second inequality holds by Claim \ref{claim:YoA}, and the last inequality holds by Lemma \ref{lemma:square_sum_eigenvalues}. In conclusion, we get
	\begin{align*}
		\begin{split}
			|\disc(E_1\circ Y)|&\leq|\langle E_1,Y\circ A\rangle|+\frac{d}{n} |\langle E_1,Y\rangle|
			\leq 2\Lambda^{1/2}d^{-3/4}\pdisc(G)^{1/2}+d
			\leq 3\Lambda^{1/2}d^{-3/4}\pdisc(G)^{1/2}.
		\end{split}
	\end{align*}

 \end{proof}

 \begin{proof}[Proof of Theorem \ref{thm:difficult}]
 
	Set $X=(Z+E)\circ Y$, then $X\succeq 0$ by Lemma \ref{lemma:hadamard}, and $X_{i,i}=(Z_{i,i}+E_{i,i})Y_{i,i}\leq 2$ for every $i\in [n]$ by Lemma \ref{lemma:max_diag}. Also, we have
	\begin{align} \label{equ:2}
		\begin{split}
			\disc(X)&=\disc(Z\circ Y)+\disc(E_0\circ Y)+\disc(E_1\circ Y)\\
            &\geq \disc(Z\circ Y)+\disc(E_0\circ Y)-|\disc(E_1\circ Y)|\\
			&\geq -d-\frac{n-d}{tn}\cdot \langle Y,A\rangle+\left[\frac{\Delta}{tn}\langle Y,A\rangle-10n^{1/2}d^{5/8}\right]-|\disc(E_1\circ Y)|\\
            &\geq \frac{n-d-\Delta}{tn}\cdot (-\langle Y,A\rangle)-10nd^{1/8}-|\disc(E_1\circ Y)|\\
			&\geq \frac{\varepsilon \Lambda}{8d^{3/2}}-|\disc(E_1\circ Y)|,
		\end{split}
	\end{align}
	where the second inequality uses Claim \ref{claim:RoZ} and Claim \ref{claim:E0}, and last inequality uses $\langle Y,A\rangle=-\Lambda/\Delta$, $d\leq n(1/2-\varepsilon)$,  $\Delta\leq d(1+\varepsilon/2)$ and $\Lambda\geq d^{7/4}n$. Next, consider two cases.
	
	\begin{description}
		\item[Case 1.] $|\disc(E_1\circ Y)|\leq \frac{\varepsilon \Lambda}{16d^{3/2}}$.
		
		By (\ref{equ:2}), we have $$\disc(X)\geq \frac{\varepsilon \Lambda}{16d^{3/2}}.$$ 
		But as $X_{i,i}\leq 2$ for every $i\in [n]$,
		$$\pdisc(G)\geq \frac{1}{2}\disc(X)\geq \frac{\varepsilon \Lambda}{32d^{3/2}}.$$
		As $\Lambda\geq d^{7/4}n$, this implies $\pdisc(G)\geq \frac{\varepsilon d^{1/4}n}{32}$, finishing the proof of Theorem \ref{thm:difficult}.
		
		\item[Case 2.] $|\disc(E_1\circ Y)|> \frac{\varepsilon \Lambda}{16d^{3/2}}$.
		
		Comparing this lower bound on  $|\disc(E_1\circ Y)|$ with the upper bound of Claim \ref{claim:E1}, we get
		the inequality
		$$ 3\Lambda^{1/2}d^{-3/4}\pdisc(G)^{1/2}\geq\frac{\varepsilon \Lambda}{16d^{3/2}},$$
		which reduces to
		$$\pdisc(G)\geq \frac{\varepsilon^2 \Lambda}{1000d^{3/2}}\geq \frac{\varepsilon^2d^{1/4}n}{1000}.$$
		This concludes the proof of Theorem \ref{thm:difficult} again.
	\end{description}
	\end{proof}

   After all these preparations, we are ready to prove the last inequality in Theorem \ref{thm:main}, which we restate as follows.

   \begin{theorem}
       For every $\varepsilon>0$ there exist $c>0$ such that the following holds. Let $G$ be a graph on $n$ vertices of average degree $d$ such that $d\leq n(1/2-\varepsilon)$. Then $$\disc^{+}(G)\geq \frac{cd^{1/4}n}{\log n}.$$
   \end{theorem}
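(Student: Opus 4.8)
The plan is to reduce the general statement to the bounded maximum-degree situation handled by Theorem \ref{thm:difficult}, by trimming off the high-degree vertices exactly as in the proof of Theorem \ref{thm:main_sparse}, and then to convert the resulting bound on $\pdisc$ into one on $\disc^{+}$ using Lemma \ref{lemma:disc_to_disc}, which is where the $\log n$ factor enters.

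First I would dispose of the small-degree range. If $d\leq n^{4/5}$, then Theorem \ref{thm:main_sparse} already gives $\disc^{+}(G)\geq cd^{1/2}n$ for $d\leq n^{2/3}$ and $\disc^{+}(G)\geq cn^2/d$ for $n^{2/3}<d\leq n^{4/5}$; a direct comparison (using $d^{5/4}\leq n\leq n\log n$ in the second range, and $d^{1/2}\geq d^{1/4}$ in the first) shows that each of these dominates $d^{1/4}n/\log n$. Hence I may assume $d\geq n^{4/5}$, which yields $d^{1/4}\geq n^{1/5}\gg \log n$; this is the crucial gain that will let me absorb additive error terms of order $n$ against the target $d^{1/4}n/\log n$.

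Next, fix a small $\delta=\delta(\varepsilon)>0$, let $X$ be the set of vertices of degree exceeding $(1+\delta)d$, and write $f=e(X)+e(X,X^c)$. If $f\geq \tfrac{\delta}{10}dn$, then Lemma \ref{lemma:maxdeg} gives $\disc^{+}(G)\geq c_1 f-c_2 n=\Omega(dn)$, which far exceeds $d^{1/4}n/\log n$, and we are done. Otherwise $f<\tfrac{\delta}{10}dn$, and I delete all edges incident to $X$ to form $G_0$; then its average degree is $d_0=d-2f/n\geq (1-\delta/5)d\geq d/2$ and its maximum degree is at most $(1+\delta)d$. Choosing $\delta$ small enough in terms of $\varepsilon$ guarantees both $\Delta(G_0)\leq (1+\varepsilon/4)d_0$ and $d_0\leq (1/2-\varepsilon/2)n$, so Theorem \ref{thm:difficult} (with parameter $\varepsilon/2$) applies to $G_0$ and gives $\pdisc(G_0)\geq c'd_0^{1/4}n=\Omega(d^{1/4}n)$; Lemma \ref{lemma:disc_to_disc} then upgrades this to $\disc^{+}(G_0)\geq \Omega\!\left(d^{1/4}n/\log n\right)$.

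Finally I would transfer the discrepancy back to $G$. Taking $U$ with $\disc_{G_0}(U)=\disc^{+}(G_0)$, the edge count changes by at most $f$ and the density term by at most $f$, so $|\disc_{G_0}(U)-\disc_{G}(U)|\leq 2f$, giving the usual dichotomy. If $\disc^{+}(G_0)\geq 4f$, then $\disc^{+}(G)\geq \disc_{G}(U)\geq \disc^{+}(G_0)-2f\geq \tfrac12\disc^{+}(G_0)$. If instead $4f>\disc^{+}(G_0)$, then $f>\tfrac14\disc^{+}(G_0)$ and Lemma \ref{lemma:maxdeg} gives $\disc^{+}(G)\geq c_1 f-c_2 n\geq \tfrac{c_1}{4}\disc^{+}(G_0)-c_2 n$. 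In both cases the output is $\Omega(d^{1/4}n/\log n)$, where in the last estimate the $-c_2 n$ is absorbed precisely because $d^{1/4}\gg\log n$ in the range $d\geq n^{4/5}$. The genuine analytic difficulty, namely producing a large value of $\pdisc$ from the spectrum, is entirely contained in Theorem \ref{thm:difficult}; the only real obstacle here is the bookkeeping of choosing $\delta$ so that trimming keeps the maximum degree inside the $(1+\varepsilon/4)$-window required by Theorem \ref{thm:difficult} while keeping $d_0$ comparable to $d$, and checking that the additive losses $2f$ and $c_2 n$ remain below the target, which is exactly what the reduction to $d\geq n^{4/5}$ ensures.
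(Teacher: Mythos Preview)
Your proposal is correct and follows essentially the same route as the paper: trim high-degree vertices via Lemma \ref{lemma:maxdeg}, apply Theorem \ref{thm:difficult} to the trimmed graph $G_0$, and pass from $\pdisc$ to $\disc^{+}$ using Lemma \ref{lemma:disc_to_disc}. The only cosmetic differences are that you first dispose of the range $d\leq n^{4/5}$ via Theorem \ref{thm:main_sparse} (which cleanly justifies absorbing the $-c_2 n$ term), and you transfer the discrepancy from $G_0$ to $G$ at the level of sets $U$ rather than at the level of the semidefinite relaxation as the paper does; both orderings work.
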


	\begin{proof}[Proof of Theorem \ref{thm:main}]
		We may assume that $\varepsilon\leq 0.1$. Let $c_1,c_2$ be the constants given by Lemma \ref{lemma:maxdeg} with respect to $\delta=0.1\varepsilon$. Let $X$ be the set of vertices of degree more than $(1+\delta)d$, and let $f=e(X)+e(X,X^c)$. If $f\geq \frac{\delta}{10} dn$, we have 
  $$\disc^{+}(G)\geq c_1f-c_2n=\Omega(\delta dn),$$ and we are done. Hence, we may assume $f<\frac{\delta}{10} dn$. Let $G_0$ be the graph we get after removing all edges containing some vertex of $X$. Then $e(G_0)=e(G)-f$, the average degree of $G$ is $ d_0:=(dn-2f)/n\geq (1-\delta)d$, and its density is $p_0=p-f/\binom{n}{2}$. Observe that the maximum degree of $G_0$ is at most $(1+\delta)d<(1+\varepsilon/2)d_0$. Therefore, by Theorem \ref{thm:difficult}, we have	$$\pdisc(G_0)>c_3d_0^{1/4}n$$
with some constant $c_3>0$.  
  
  It only remains to transfer our results obtained for $G_0$ to the original graph $G$. Let $X$ be a matrix such that $X\succeq 0$, $X_{i,i}\leq 1$ for every $i\in [n]$, and attaining $\disc_{G_0}(X)=\pdisc(G_0)$. Write $A_0$ for the adjacency matrix of $G_0$. We have 
  \begin{align*}|\disc_{G_0}(X)-\disc_{G}(X)|&=|\langle X,A_0-A\rangle-(p_0-p)\langle X,J-I\rangle|\\
  &\leq \langle A-A_0,J\rangle+(p-p_0)(n^2-n)=4f.
  \end{align*}
		Thus, if $\pdisc(G_0)\geq 8f$, we have $\pdisc(G)\geq \frac{1}{2}\pdisc(G_0)$. On the other hand, if $8f>\pdisc(G_0)$, then $$\disc^{+}(G)\geq c_1f-c_2n\geq \frac{c_1}{8}\pdisc(G_0)-c_2n.$$
	Hence, combining this with our lower bounds on $\pdisc(G_0)$, and using Lemma \ref{lemma:disc_to_disc}, we get
 $$\disc^{+}(G)\geq \frac{cd^{1/4}n}{\log n}$$
   with some constant $c>0$.
	\end{proof}

	\section{The second eigenvalue}\label{sect:eigenvalue}
	
	In this section, we prove Theorem \ref{thm:eigenvalues}. In Section \ref{sect:hard}, we have seen a number of lower bounds on $\pdisc(G)$ including the positive eigenvalues. Here, we present a simple upper bound in terms of the second eigenvalue of $G$, akin to Lemma \ref{lemma:lambda_2_disc}.  We continue to use the same notation as introduced in Section \ref{sect:easy}, but now we assume that $G$ is $d$-regular, so $\lambda_1=d$ and $v_1=\mathbf{e}$.
	
	\begin{lemma}\label{lemma:lambda_2}
		$\pdisc(G)\leq \lambda_2 n.$
	\end{lemma}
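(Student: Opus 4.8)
The plan is to exploit the $d$-regularity, which forces $v_1=\mathbf{e}$ and $\lambda_1=d$, so that the shifted matrix $A-\frac{d}{n}J$ has a particularly clean spectral form. Indeed, $\frac{d}{n}J=d\,\mathbf{e}\mathbf{e}^T=\lambda_1 v_1v_1^T$, and hence
$$A-\frac{d}{n}J=\sum_{i=1}^n\lambda_i v_iv_i^T-\lambda_1 v_1v_1^T=\sum_{i=2}^n\lambda_i v_iv_i^T.$$
Thus the all-ones direction, which is exactly what the penalty term $\frac{d}{n}J$ is meant to cancel, is removed, and what remains is controlled entirely by $\lambda_2\geq\lambda_3\geq\dots\geq\lambda_n$.

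The key step is the operator inequality $A-\frac{d}{n}J\preceq\lambda_2 I$. Since $\lambda_i\leq\lambda_2$ for every $i\geq 2$, and since $G$ is a non-complete regular graph so that $\lambda_2\geq 0$ (by interlacing, as its independence number is at least $2$), I can write
$$\lambda_2 I-\Big(A-\frac{d}{n}J\Big)=\lambda_2 v_1v_1^T+\sum_{i=2}^n(\lambda_2-\lambda_i)v_iv_i^T\succeq 0,$$
because every coefficient on the right-hand side is nonnegative.

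Finally, I would feed this into the semidefinite program defining $\pdisc(G)$. Let $X$ be any feasible matrix, that is, $X\succeq 0$ and $X_{i,i}\leq 1$ for all $i$. Using the standard fact that $\langle X,M\rangle\geq 0$ whenever $X,M\succeq 0$, applied to $M=\lambda_2 I-(A-\frac{d}{n}J)$, together with $\operatorname{tr}(X)=\sum_i X_{i,i}\leq n$, I obtain
$$\disc(X)=\Big\langle X,A-\frac{d}{n}J\Big\rangle\leq\langle X,\lambda_2 I\rangle=\lambda_2\operatorname{tr}(X)\leq\lambda_2 n.$$
Taking the maximum over all feasible $X$ yields $\pdisc(G)\leq\lambda_2 n$. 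There is no serious obstacle here; the only points worth a word of justification are the inequality $\lambda_2\geq 0$ (needed so that the $v_1v_1^T$ term in the operator inequality is positive semidefinite, and valid precisely because the regime of interest keeps $G$ far from complete) and the elementary fact that the trace inner product of two positive semidefinite matrices is nonnegative.
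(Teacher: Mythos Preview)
Your proof is correct. Both your argument and the paper's hinge on the same spectral fact---that after subtracting $\frac{d}{n}J=\lambda_1 v_1v_1^T$ the remaining spectrum is capped by $\lambda_2$---and both implicitly require $\lambda_2\geq 0$, which you make explicit and justify. The routes diverge only in packaging: the paper writes $\disc(X)=\mathds{1}^T\big(X\circ A-\tfrac{d}{n}X\big)\mathds{1}$, decomposes $A-\tfrac{d}{n}J=\sum_{i\geq 2}\lambda_i v_iv_i^T$, and applies the Schur product theorem termwise to get $\sum_{i\geq 2}\lambda_i\, X\circ(v_iv_i^T)\preceq \lambda_2\,X\circ I$, whereas you go straight to the operator inequality $A-\tfrac{d}{n}J\preceq \lambda_2 I$ and pair against $X$ using $\langle X,M\rangle\geq 0$ for $X,M\succeq 0$. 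Your version is slightly more elementary (no Schur product theorem needed); the paper's version stays within the Hadamard-product framework used throughout Sections~\ref{sect:easy}--\ref{sect:hard}.
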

	
	\begin{proof}
 
		Let $X\in \Sym(n)$ be positive semidefinite such that $X_{i,i}\leq 1$ for every $i\in [n]$, and $X$ attains the maximum $\pdisc(G)=\disc(X)$. We have
		$$X\circ A-\frac{d}{n}X=\sum_{i=2}^{n}\lambda_i X\circ (v_iv_i^{T}).$$
		As $X$ and $v_iv_i^{T}$ are positive semidefinite, we also have $X\circ (v_iv_i^{T})\succeq 0$. Therefore, 
		$$\sum_{i=2}^{n}\lambda_i X\circ (v_iv_i^{T})\preceq\lambda_2\sum_{i=2}^{n} X\circ(v_iv_i^{T})\preceq \lambda_2\sum_{i=1}^{n} X\circ(v_iv_i^{T})=\lambda_2 (X\circ I).$$
        Hence,
        $$\disc(X)=\mathds{1}^T\left(X\circ A-\frac{d}{n}X\right)\mathds{1}\leq \lambda_2 \mathds{1}^T(X\circ I)\mathds{1}=\lambda_2\sum_{i=1}^{n}X_{i,i}\leq \lambda_2 n.$$
	\end{proof}

	Lemma \ref{lemma:lambda_2_disc} combined with Theorem \ref{thm:main_sparse} immediately yields the following bounds on $\lambda_2$.
	
	\begin{lemma}
		There exists $c>0$, and for every $\varepsilon>0$ there exists $c_1>0$ such that  the following holds for every sufficiently large $n$:
		$$\lambda_2>\begin{cases} cd^{1/2} &\mbox{if }d\leq n^{2/3},\\
			c_1 n/d  & \mbox{if }n^{2/3}\leq d\leq n^{3/4}.\end{cases}$$
	\end{lemma}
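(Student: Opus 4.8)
The plan is to read off both bounds directly from the upper bound relating $\disc^+(G)$ to $\lambda_2$ (Lemma \ref{lemma:lambda_2_disc}) together with the lower bounds on $\disc^+(G)$ from Theorem \ref{thm:main_sparse}; since $G$ is now assumed $d$-regular, both earlier results apply without change. Rearranging Lemma \ref{lemma:lambda_2_disc}, which gives $\disc^+(G)\leq \frac{\lambda_2}{2}n+d$, yields
$$\lambda_2\geq \frac{2}{n}\left(\disc^+(G)-d\right),$$
so any lower bound on $\disc^+(G)$ transfers to a lower bound on $\lambda_2$ after paying the harmless additive term $2d/n$.

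First, for $d\leq n^{2/3}$, I would substitute the first branch $\disc^+(G)>cd^{1/2}n$ of Theorem \ref{thm:main_sparse} to obtain $\lambda_2>2cd^{1/2}-2d/n$. The next step is to check that the error term is of lower order: since $d\leq n^{2/3}$ we have $2d/n\leq 2d^{1/2}n^{1/3}/n=2d^{1/2}/n^{2/3}$, which is at most $cd^{1/2}$ once $n$ is sufficiently large. Absorbing this loss into the constant gives $\lambda_2>c'd^{1/2}$ for an absolute $c'>0$ (one simply relabels the constant so the stated $c$ works).

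Next, for $n^{2/3}\leq d\leq n^{3/4}$, I would substitute the second branch $\disc^+(G)>cn^2/d$ to get $\lambda_2>2cn/d-2d/n$. Here the task is to confirm that $2d/n$ is negligible compared with $n/d$: the ratio is $(2d/n)/(n/d)=2d^2/n^2\leq 2n^{3/2}/n^2=2n^{-1/2}$, which tends to $0$, so for large $n$ the subtracted term consumes at most half of the main term, and $\lambda_2>c_1 n/d$ follows for a suitable absolute $c_1$. Note that the $\varepsilon$ appearing in the statement is not actually needed in this range, since Theorem \ref{thm:main_sparse} furnishes an absolute constant all the way up to $d\leq n/2$.

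Because the whole argument is an immediate combination of two already-established inequalities, there is no genuinely hard step. The only point requiring any care is verifying that the additive $+d$ loss in Lemma \ref{lemma:lambda_2_disc} (equivalently the $-2d/n$ term above) does not erode the order of magnitude of the main bound, and I have checked this is comfortably satisfied throughout both ranges $d\leq n^{2/3}$ and $n^{2/3}\leq d\leq n^{3/4}$.
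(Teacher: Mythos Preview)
Your proposal is correct and follows exactly the approach indicated in the paper, which simply states that Lemma~\ref{lemma:lambda_2_disc} combined with Theorem~\ref{thm:main_sparse} immediately yields the bounds. Your additional verification that the additive $d$ term is of lower order in both ranges is sound and merely makes explicit what the paper leaves implicit.
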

	
	Similarly, Theorem \ref{thm:difficult} implies that $\lambda_2=\Omega(d^{1/4})$ for every $d\leq n(1/2-\varepsilon)$. However, we can get an even stronger bound. In the next theorem, we will sandwich $\Lambda$ between two bounds involving $\lambda_2$, which in turn will imply a lower bound on $\lambda_2$ as well. We highlight that as $G$ is regular, 
    $$\Lambda=-\sum_{i=2}^{n}\lambda_i^3=d^3-T,$$ where $T$ is the number of homomorphic copies of triangles in $G$, i.e.\ six times the number of triangles in $G$.  
	
	\begin{theorem}\label{thm:lambda_2_and_delta}
		There exists $c_1>0$ (depending on $\varepsilon$) such that the following holds. Either $\lambda_2\geq c_1^{-1}d^{1/2}$, or 
		$$c_1dn\lambda_2^2\geq \Lambda\geq \frac{nd^2}{c_1\lambda_2}.$$
	\end{theorem}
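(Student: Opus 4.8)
Throughout write $a=\tfrac{d-\lambda_2}{n}$, let $K$ be the largest index with $\lambda_i>0$, and set
$$P_j=\sum_{2\le i\le K}\lambda_i^{j},\qquad N_j=\sum_{i>K}|\lambda_i|^{j}\qquad(j=1,2,3),$$
so that $\Lambda=N_3-P_3$, while the identities $\sum_{i\ge 2}\lambda_i=-d$ and $\sum_{i\ge 2}\lambda_i^2=nd-d^2$ read $N_1=P_1+d$ and $N_2=(nd-d^2)-P_2$. I may assume $\lambda_2<c_1^{-1}d^{1/2}$, since otherwise the first alternative holds, and (as the interesting range has $d$ large) that $\lambda_2\ge 1$. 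The plan is to first show that the positive part of the spectrum is negligible, and then prove the two bounds on $\Lambda$ separately, the upper one being the crux. For the positive part I would feed the usual test matrices into $\pdisc(G)\le\lambda_2 n$ (Lemma~\ref{lemma:lambda_2}): Lemmas~\ref{lemma:sum_eigenvalues}, \ref{lemma:square_sum_eigenvalues} and~\ref{lemma:cube_sum_eigenvalues}, together with $\Delta=d$, give $P_1\le\lambda_2 n$, $P_2\le d^{1/2}\lambda_2 n<c_1^{-1}dn$ and $P_3\le d\lambda_2 n$. Using $d\le n/2$, the middle bound yields $N_2\ge nd/4$ once $c_1\ge 4$, and $N_1\le d+\lambda_2 n$.

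The heart of the argument is the upper bound $\Lambda\le c_1 dn\lambda_2^2$, which I would obtain from the elementary chain
$$\Lambda=N_3-P_3\le N_3\le|\lambda_n|\sum_{i>K}\lambda_i^2=|\lambda_n|N_2\le|\lambda_n|\,nd,$$
by proving the reverse of Proposition~\ref{prop:product}, namely $|\lambda_n|\le\varepsilon^{-1}\lambda_2^2$. This is the one point where plain eigenvalue bookkeeping is insufficient and the combinatorial fact $A\circ A=A$ must be used, via the Schur product theorem (Lemma~\ref{lemma:hadamard}). Since $A-\tfrac{d}{n}J=\sum_{i\ge2}\lambda_i v_iv_i^{T}$ and $\lambda_i\le\lambda_2$ for $i\ge 2$, the matrix $M:=\lambda_2\big(I-\tfrac1n J\big)-\big(A-\tfrac dn J\big)=\lambda_2 I-A+aJ$ is positive semidefinite, with diagonal $\lambda_2+a$ and off-diagonal entries $a-A_{ij}$. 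Because $A_{ij}^2=A_{ij}$, its Hadamard square works out to
$$M\circ M=(1-2a)A+a^2J+\big((\lambda_2+a)^2-a^2\big)I\succeq 0.$$
Evaluating this quadratic form at $v_n$ (orthogonal to $\mathbf{e}$, so $Jv_n=0$) gives $(1-2a)\lambda_n+\lambda_2^2+2a\lambda_2\ge 0$, i.e.\ $(1-2a)|\lambda_n|\le\lambda_2^2+2a\lambda_2$. Here is the only place the hypothesis is used: $d\le n(1/2-\varepsilon)$ gives $a\le d/n\le 1/2-\varepsilon$, hence $1-2a\ge2\varepsilon$, while $2a\lambda_2<\lambda_2\le\lambda_2^2$; this yields $|\lambda_n|\le\varepsilon^{-1}\lambda_2^2$ and therefore $\Lambda\le\varepsilon^{-1}\lambda_2^2\,nd$.

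With the upper bound in hand the lower bound $\Lambda\ge\frac{nd^2}{c_1\lambda_2}$ follows cleanly. Plugging $|\lambda_n|\le\varepsilon^{-1}\lambda_2^2$ into Proposition~\ref{prop:product} gives $\lambda_2=\Omega_\varepsilon(d^{1/3})$, so $d=o(\lambda_2 n)$ and hence $N_1\le d+\lambda_2 n\le 2\lambda_2 n$ for large $n$. The Cauchy--Schwarz inequality $N_1N_3\ge N_2^2$ (exactly the inequality behind Lemma~\ref{lemma:square_sum_eigenvalues}, applied to the negative eigenvalues) then gives
$$N_3\ge\frac{N_2^2}{N_1}\ge\frac{(nd/4)^2}{2\lambda_2 n}=\frac{nd^2}{32\lambda_2},$$
and since $\lambda_2<c_1^{-1}d^{1/2}\le d^{1/2}/8$ forces $P_3\le d\lambda_2 n\le\tfrac12 N_3$, I conclude $\Lambda=N_3-P_3\ge\tfrac12 N_3\ge\frac{nd^2}{64\lambda_2}$. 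Taking $c_1=\max\{64,\varepsilon^{-1}\}$ (and absorbing the reduction $\lambda_2\ge 1$ into the constant) establishes both inequalities at once.

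I expect the Schur-product step to be the main obstacle. The difficulty is getting the \emph{sign} of the resulting inequality to bound $|\lambda_n|$ from \emph{above}: the naive Hadamard square of $A-\lambda_n I$ only reproduces the trivial $|\lambda_n|\ge 1$, and one genuinely has to build the test matrix $M$ out of $\lambda_2$ and the all-ones direction so that $M\circ M$ evaluated at $v_n$ pits $\lambda_2^2$ against $(1-2a)|\lambda_n|$. Everything else is either routine ($P_j$ estimates, Cauchy--Schwarz) or standard constant-chasing, with the sole role of the assumption $d\le n(1/2-\varepsilon)$ being to keep $1-2a$ bounded away from zero.
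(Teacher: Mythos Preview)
Your proof is correct. The lower bound $\Lambda\ge nd^2/(c_1\lambda_2)$ is essentially the same as the paper's: the paper invokes Lemma~\ref{lemma:Lambda} and Lemma~\ref{lemma:lambda_2}, but unpacking Lemma~\ref{lemma:Lambda} one finds exactly your Cauchy--Schwarz step $N_1N_3\ge N_2^2$ together with the $P_j$ bounds from Lemmas~\ref{lemma:sum_eigenvalues}--\ref{lemma:cube_sum_eigenvalues}.

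The upper bound is where you genuinely diverge. The paper takes the positive semidefinite matrix $Z=I-\tfrac{1}{\lambda_2}A+\tfrac{d-\lambda_2}{\lambda_2 n}J$ (your $M$ divided by $\lambda_2$), pairs it with $Y=\tfrac{1}{d}\bigl(A^2-\tfrac{d^2}{n}J\bigr)$ via the Hadamard product, and bounds $\disc(Y\circ Z)\ge\tfrac{\varepsilon\Lambda}{8d\lambda_2}$ directly; feeding this into $\pdisc(G)\le\lambda_2 n$ gives $\Lambda\le O_\varepsilon(dn\lambda_2^2)$ in one shot, never isolating $|\lambda_n|$. You instead take the Hadamard \emph{square} $M\circ M$ and evaluate at $v_n$ to obtain the clean intermediate inequality $|\lambda_n|\le\varepsilon^{-1}\lambda_2^2$, then combine with the crude $\Lambda\le |\lambda_n|\cdot N_2\le |\lambda_n|\,nd$. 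Your route has the advantage of extracting a standalone fact of independent interest---indeed, $\lambda_2=\Omega(\sqrt{|\lambda_n|})$ is precisely Ihringer's Proposition~1.3 alluded to in Section~\ref{sect:intr_eigen}, and your $M\circ M$ computation is a very clean proof of it. The paper's route stays entirely inside the $\pdisc$ machinery and treats $\Lambda$ as the primary object, which fits the surrounding narrative but is somewhat less transparent. Both arguments rest on the same PSD matrix $M=\lambda_2 Z$ and the Schur product theorem; only the choice of Hadamard partner differs.
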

	
	\begin{proof}
		By Lemma \ref{lemma:Lambda} and Lemma \ref{lemma:lambda_2}, we may assume that $\Lambda\geq \frac{1}{16}nd^{3/2}$, otherwise $\lambda_2\geq \Omega(d^{1/2})$. But then, 
		$$\lambda_2\geq \frac{\pdisc(G)}{n}\geq \frac{cnd^2}{\Lambda},$$
		where $c>0$ is some absolute constant. This implies the first inequality in our theorem.

		Define the matrix 
		$$Z:=I-\frac{1}{t}A+\frac{d-t}{tn}J,$$
		where $t=\lambda_2$, then $Z\succeq 0$. Hence, taking the Hadamard product of $Z$ with 
            $$Y=\frac{1}{d}\sum_{i=2}^{n}\lambda_i^2 v_iv_i^{T}=\frac{1}{d}(A^2-\frac{d^2}{n}J) \succeq 0 ,$$
            we have $Y\circ Z\succeq 0$. Also, we can calculate (similarly as in Claim \ref{claim:RoZ}) that 
            \begin{align*}
                \disc(Y\circ Z)&=\disc(Y\circ I)-\frac{1}{t}\disc(Y\circ A)+\frac{d-t}{tn}\disc(Y\circ J)\\      
                &=-\frac{d}{n}\cdot \mbox{tr}(Y)-\frac{1}{t}\left(1-\frac{d}{n}\right)\langle Y,A\rangle+\frac{d-t}{tn}\cdot \disc(Y)\\
                &=-d-\frac{n-2d+t}{nt}\cdot \langle Y,A\rangle,
            \end{align*}
where the last equality holds by noting that $\mbox{tr}(Y)=n$ and $\langle Y,J\rangle=0$, which implies that $\disc(Y)=\langle Y,A\rangle$. Since $A=\sum_{i=1}^{n}\lambda_i v_iv_i^{T}$, we have that  $\langle Y,A\rangle=\frac{1}{d}\sum_{i=2}^{n}\lambda_i^3=- \Lambda/d$. Using that $d\leq n(1/2-\varepsilon)$, the coefficient of $ \langle Y,A\rangle$ on the right-hand-side is at least $\frac{\varepsilon}{4t}$, so
		$$\disc(Y\circ Z)\geq \frac{\varepsilon\Lambda}{4dt}-d\geq \frac{\varepsilon\Lambda}{8d\lambda_2}.$$
		As the diagonal entries of $Y\circ Z$ are clearly bounded by 2, we deduce that
		$$\lambda_2\geq \frac{\pdisc(G)}{n}\geq \frac{\disc(Y\circ Z)}{2n}\geq \frac{\varepsilon\Lambda}{16dn\lambda_2}.$$
		This implies the second inequality in our theorem.
	\end{proof}
	
	By comparing the two sides of the inequality in Theorem \ref{thm:lambda_2_and_delta}, we immediately get $\lambda_2=\Omega(d^{1/3})$. 
	This finishes the proof of Theorem \ref{thm:eigenvalues}.
	
	\section{Concluding remarks}
	Say that a graph $G$ is \emph{$\varepsilon$-far} from a graph $H$ on the same number of vertices $n$, if one needs to change at least $\varepsilon n^2$ edges of $G$ to get a graph isomorphic to $H$. Say that $G$ is \emph{$\varepsilon$-close} to $H$ if it is not $\varepsilon$-far.
	
	Our only known examples of $n$ vertex graphs with positive discrepancy $O(n)$ are the Tur\'an graphs $T_r(n)$, and graphs that are $O(1/n)$-close to a Tur\'an graph. This suggest that the positive discrepancy of $G$ might be substantially larger if $G$ is far from being a Tur\'an graph. If $G$ has average degree $d$, the condition $d\leq (1/2-\varepsilon)n$ automatically ensures that $G$ is $\Omega(\varepsilon)$-far, so Theorem \ref{thm:main} can be thought of as a special subcase of this conjecture.  
	
	For any positive integer $t$, de Caen \cite{caen} constructed a $d=2^{4t-2} - 2^{3t-2} - 2^{2t-1} + 2^{t-1}$ regular graph on $n=2^{4t-1}$ vertices with $\lambda_2= 2^{t-1} = \Theta(n^{1/4})$ and $|\lambda_{n}| = 2^{3t-2} - 2^{t-1} = \Theta(n^{3/4})$, and thus having positive discrepancy $O(n^{5/4})$.  It is easy to check that this graph is $\Omega(1)$-far from every Tur\'an graph. Motivated by this example, we propose the following conjecture.
	
	\begin{conjecture}
		Let $\varepsilon>0$, then there exist $c>0$ such that the following holds. If $G$ is an $n$-vertex  graph that is $\varepsilon$-far from every Tur\'an graph (including the empty graph), then
		$$\disc^{+}(G)\geq cn^{5/4}.$$
	\end{conjecture}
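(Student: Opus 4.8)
The plan is to first use the hypothesis to fix the density, then split into a range already covered by Theorem \ref{thm:main} and a genuinely dense range requiring a new idea. Being $\varepsilon$-far from the empty graph forces $e(G)\geq \varepsilon n^2$, while being $\varepsilon$-far from the complete graph $K_n=T_n(n)$ forces at least $\varepsilon n^2$ non-edges; hence the density satisfies $2\varepsilon\lesssim p\lesssim 1-2\varepsilon$, so the average degree is $d=\Theta_\varepsilon(n)$. This is the crucial consequence of the hypothesis: it guarantees we are always in the dense regime, where $d^{1/4}n=\Theta(n^{5/4})$ is the right target. After applying Lemma \ref{lemma:maxdeg} with a small $\delta=\delta(\varepsilon)$ (so that the removed edges number $o_\varepsilon(n^2)$ and the far-from-Tur\'an status degrades only to $\tfrac{\varepsilon}{2}$-far), we may assume the maximum degree is $\Delta\leq(1+\delta)d$: otherwise the high-degree vertices already produce $\disc^{+}(G)=\Omega(dn)=\Omega(n^2)\gg n^{5/4}$. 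Thus we reduce to a dense, near-regular graph.

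If $p\leq \tfrac12-\varepsilon'$ for a suitable $\varepsilon'=\varepsilon'(\varepsilon)$, the third inequality of Theorem \ref{thm:main} applies directly and, since $d=\Theta(n)$, yields $\disc^{+}(G)\geq c\,d^{1/4}n/\log n=\Omega(n^{5/4}/\log n)$ without using the dense-regime part of the hypothesis at all. Matching the conjectured clean bound $cn^{5/4}$ would require eliminating the logarithm, which enters solely through the graph-Grothendieck step of Lemma \ref{lemma:disc_to_disc}. I would attempt to remove it by rounding the \emph{explicit} semidefinite solution $X=(Z+E)\circ Y$ constructed in the proof of Theorem \ref{thm:difficult} with a single random hyperplane, as in Lemma \ref{lemma:main_sparse}, rather than invoking the generic Grothendieck inequality. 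This is delicate, since the graph-Grothendieck gap is genuinely $\Theta(\log n)$ in general, so the log loss should be regarded as a secondary obstacle that the structured nature of $X$ may or may not allow one to bypass.

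The crux is the range $\tfrac12-\varepsilon'\leq p\leq 1-2\varepsilon$, where Theorem \ref{thm:difficult} breaks down: its argument requires the coefficient $\tfrac{n-d-\Delta}{tn}$ appearing in display (\ref{equ:2}) to be bounded below by a positive constant, which fails once $d>n/2$. The natural move here is to pass to the complement, using $\disc^{+}(G)=\disc^{-}(\overline G)=\Theta(\surplus(\overline G))$, the last equality via Lemma \ref{lemma:surplus} for the near-regular graph produced above. Now $\overline G$ has density $1-p\in[2\varepsilon,\tfrac12+\varepsilon']$, and, since editing distance is preserved under complementation, $G$ being $\varepsilon$-far from every balanced complete multipartite graph is exactly $\overline G$ being $\varepsilon$-far from every disjoint union of equal-sized cliques. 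The goal thus becomes a MaxCut-type lower bound $\surplus(\overline G)=\Omega(n^{5/4})$ for graphs far from unions of cliques. To attack it I would build the ``negative'' mirror of Section \ref{sect:easy}: with $\Lambda^{-}:=\sum_{\lambda_i<0}|\lambda_i|^3$ playing the role of $\Lambda$, prove analogues of Lemmas \ref{lemma:sum_eigenvalues}--\ref{lemma:Lambda} that lower-bound the negative-discrepancy semidefinite value in terms of the negative eigenvalues, and then feed in the far-from-cliques hypothesis to keep $\Lambda^{-}$ in the productive range.

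The main obstacle is precisely this last input: translating the combinatorial hypothesis ``far from a union of cliques'' into the quantitative spectral statement that drives the semidefinite bound. The extremal graphs, namely Tur\'an graphs (equivalently unions of equal cliques in the complement), are exactly those whose spectrum degenerates into one large eigenvalue, a block of very negative eigenvalues, and essentially nothing in between, and the de Caen construction shows the truth is $\Theta(n^{5/4})$. What is really needed is an \emph{inverse} theorem: if the positive-discrepancy semidefinite value is $o(n^{5/4})$, then the spectrum is forced into this degenerate Tur\'an-like profile, which in turn forces $G$ to be $o_\varepsilon(1)$-close to a Tur\'an graph, contradicting the hypothesis. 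Establishing such a stability statement, as opposed to the comparatively routine direct eigenvalue estimates, is where the genuine difficulty lies and is the reason the assertion remains a conjecture; a partial but clean target achievable by the route above would be $\disc^{+}(G)=\Omega_\varepsilon(n^{5/4}/\log n)$.
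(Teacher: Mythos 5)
You were asked to prove a statement that the paper itself does not prove: it appears in the concluding remarks as an open conjecture, motivated by de Caen's construction showing that the exponent $5/4$ would be optimal, and no proof exists in the paper to compare against. Your proposal is, appropriately, a program rather than a proof, and you correctly locate the genuine obstruction: in the dense range, the hypothesis ``$\varepsilon$-far from every Tur\'an graph'' must be converted into a quantitative spectral statement --- an inverse or stability theorem asserting that positive discrepancy $o(n^{5/4})$ forces the degenerate Tur\'an-like spectral profile (one large eigenvalue, a block of very negative eigenvalues, little in between) and hence proximity to a Tur\'an graph. Nothing in Sections \ref{sect:easy}--\ref{sect:hard} exploits the far-from-Tur\'an hypothesis, and the paper states explicitly that it would already be interesting to obtain $cn^{1+\alpha}$ for any $\alpha>0$; your honest identification of this missing inverse theorem matches exactly why the authors leave the statement open.

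That said, two of your partial claims overreach. First, Lemma \ref{lemma:surplus} requires exact regularity, while the graph produced by your Lemma \ref{lemma:maxdeg} cleanup is only near-regular, so the passage to $\surplus(\overline G)$ is not licensed; fortunately it is also unnecessary, since the identity $\disc^{+}(G)=\disc^{-}(\overline G)$ holds for all graphs and you may work with $\disc^{-}(\overline G)$ directly. Second, your closing assertion that $\disc^{+}(G)=\Omega_\varepsilon(n^{5/4}/\log n)$ is ``achievable by the route above'' is justified only in the branch $d\leq (1/2-\varepsilon')n$, where the third inequality of Theorem \ref{thm:main} applies. In the complementary dense branch the paper supplies no tool at all: Theorem \ref{thm:maxcut} lower-bounds $\surplus(G)=\Theta(\disc^{+}(\overline G))$ for regular $G$, which is the wrong quantity for your purpose (lower-bounding $\surplus(\overline G)=\Theta(\disc^{-}(\overline G))=\Theta(\disc^{+}(G))$ this way is circular), and the ``negative mirror'' of Lemmas \ref{lemma:sum_eigenvalues}--\ref{lemma:Lambda} that you invoke does not exist in the paper. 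Moreover, it cannot be a purely spectral-magnitude statement: a disjoint union of $r$ equal cliques has all negative eigenvalues equal to $-1$, so $\Lambda^{-}=O(n)$ there, and only the stability input (smallness of $\Lambda^{-}$ forces closeness to a union of cliques) --- precisely the unproven step --- can drive the argument. Note also that the window $d\approx n/2$ makes the hypothesis indispensable even for the weaker $\log$-losing target, since $T_2(n)$ has $\disc^{+}=O(n)$, so density alone can never suffice there; your program addresses this window only through the same missing inverse theorem. In short: sound reductions, correct diagnosis of the difficulty, but the stated partial bound should be claimed only for $d\leq(1/2-\varepsilon')n$, and the conjecture remains open.
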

	
	It would be already very interesting to find any $\alpha>0$ such that $\disc^{+}(G)\geq cn^{1+\alpha}$ is satisfied in the previous conjecture. Also, one can ask the analogous problem about the second eigenvalue.
	
	\begin{conjecture}
		Let $\varepsilon>0$, then there exist $c>0$ such that the following holds. If $G$ is a regular $n$-vertex graph that is $\varepsilon$-far from every Tur\'an graph (including the empty graph), then its second largest eigenvalue $\lambda_2$ is at least $cn^{1/4}$.
	\end{conjecture}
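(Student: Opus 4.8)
The plan is to deduce the eigenvalue bound from a positive-discrepancy bound and then isolate the one regime where a genuinely new idea is required. Since $G$ is regular, Lemma~\ref{lemma:lambda_2_disc} gives $\disc^{+}(G)\le \frac{\lambda_2}{2}n+d$, so it suffices to prove $\disc^{+}(G)\ge cn^{5/4}$: then $\lambda_2\ge \frac{2}{n}(\disc^{+}(G)-d)=\Omega(n^{1/4})$ once $d=O(n)$. The hypotheses pin down the density. Being $\varepsilon$-far from the empty graph $T_1(n)$ forces $e(G)\ge \varepsilon n^2$, hence $d\ge 2\varepsilon n$, and being $\varepsilon$-far from $K_n=T_n(n)$ forces $d\le(1-2\varepsilon)n$; thus $d=\Theta_\varepsilon(n)$. (If $G$ is disconnected it has at least two $d$-regular components, so $\lambda_2=d=\Omega(n)$ and we are done; assume $G$ connected.) For $d\le(\tfrac12-\varepsilon')n$ the conjecture is already a theorem: the third branch of Theorem~\ref{thm:main} gives $\disc^{+}(G)=\Omega(d^{1/4}n/\log n)=\Omega(n^{5/4}/\log n)$, and Theorem~\ref{thm:eigenvalues} yields the even stronger $\lambda_2=\Omega(d^{1/3})=\Omega(n^{1/3})$. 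Everything therefore reduces to the dense window
\[
d\in\big[(\tfrac12-\varepsilon')n,\,(1-2\varepsilon)n\big].
\]

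In this window the structural content of the conjecture becomes transparent. For $d$-regular $G$, the complement $\bar G$ is $(n-1-d)$-regular and, on $\mathbf 1^{\perp}$, has adjacency matrix $-I-A$, so its eigenvalues are $-1-\lambda_i$ and $\lambda_{\min}(\bar G)=-1-\lambda_2(G)$. Since the graphs with smallest eigenvalue at least $-1$ are exactly the disjoint unions of cliques, $\lambda_2(G)=0$ is equivalent to $\bar G$ being a disjoint union of cliques, i.e.\ to $G$ being complete multipartite; regularity then forces equal parts, so $G=T_r(n)$. Thus the conjecture is precisely a \emph{quantitative stability} version of the characterisation ``regular and $\lambda_2=0$ $\Leftrightarrow$ Turán''. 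The de Caen graph ($d\approx n/2$, $\lambda_2=\Theta(n^{1/4})$, $\Omega(1)$-far from every Turán graph) calibrates the sharp rate: the target is to show that the edit distance of a $d$-regular graph to the nearest Turán graph is $O(\lambda_2^{2}n^{3/2})$. Indeed $\lambda_2=n^{1/4}$ gives $\lambda_2^2n^{3/2}=n^2$, matching the $\Theta(n^2)$ distance of de Caen, and in general this bound together with the $\varepsilon n^2$-farness hypothesis yields $\lambda_2^2\ge(\varepsilon/C)n^{1/2}$, i.e.\ $\lambda_2\ge c\sqrt{\varepsilon}\,n^{1/4}$, exactly the claim.

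To attack the stability statement I would pass to $\bar G$ and study graphs with $\lambda_{\min}(\bar G)\in[-1-\mu,-1]$, where $\mu=\lambda_2(G)$: writing $\bar A+I=\sum_i\nu_i u_iu_i^{T}$ (so every $\nu_i\ge-\mu$), the graph $\bar G$ is a disjoint union of cliques exactly when $\bar A+I\succeq0$, and one wants to extract an approximate clique partition from the few large-$\nu_i$ eigenvectors and then clear the small negative part by few edits, using the regularity of $G$ to force equal block sizes. It is tempting to instead run the SDP/Hadamard machinery of Section~\ref{sect:hard}: the matrix $Y=\frac1\Delta\sum_{i\ge2}\lambda_i^2v_iv_i^T$ with a Schur-positivity correction converts the energy $\Lambda=-\sum_{i\ge2}\lambda_i^3$ into discrepancy at rate $\Lambda/d^{3/2}$, and for de Caen $\Lambda=\Theta(n^{11/4})$, so $\Lambda/d^{3/2}=\Theta(n^{5/4})$ is precisely the target. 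The obstruction is that this conversion (Theorem~\ref{thm:difficult}, Theorem~\ref{thm:lambda_2_and_delta}) relies on the sign of the coefficient $\frac{n-2d+t}{nt}$, which is positive only for $d<(\tfrac12-\varepsilon)n$ and \emph{flips} at $d=n/2$; this is not a technicality but reflects that a large $\Lambda$ carried entirely by the most negative eigenvalue says nothing about $\lambda_2$, as every Turán graph already shows ($\lambda_2=0$ yet $\Lambda=\Theta(n^3/r^2)$).

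The main obstacle, then, is to replace this positive-spectrum-blind conversion by a genuinely structural argument that recovers a near-clique partition of $\bar G$ from $\lambda_{\min}(\bar G)\ge-1-\mu$ with the \emph{sharp} loss $O(\mu^2n^{3/2})$ in edit distance. I expect that a weaker conclusion $\lambda_2\ge cn^{\alpha}$ for some $\alpha>0$ is within reach of a cruder stability estimate — for instance bounding, via interlacing, both the number of parts and the within/between-part edge deficits by polynomial functions of $\mu$ — whereas the exponent $\tfrac14$ will require pinning down exactly how the least eigenvalue of a near-disjoint-union-of-cliques concentrates. Matching the structural edit distance to the spectral defect with the correct $\mu^2$ factor (the de Caen graph has spectral defect $\Theta(n^{3/2})$ in Frobenius norm but structural distance $\Theta(n^2)$) is where the de Caen geometry must enter, and is the crux of the problem.
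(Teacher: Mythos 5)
This statement is one of the paper's concluding \emph{conjectures}: the paper offers no proof of it, so there is no argument of the authors to match yours against, and your writeup is, by your own admission, not a proof either. That said, your reductions are all correct. Farness from $T_1(n)$ and from $T_n(n)$ does pin $d=\Theta_{\varepsilon}(n)$; Lemma~\ref{lemma:lambda_2_disc} does convert any bound $\disc^{+}(G)\geq cn^{5/4}$ into $\lambda_2=\Omega(n^{1/4})$; for $d\leq (\tfrac12-\varepsilon')n$ the conjecture indeed follows from Theorem~\ref{thm:eigenvalues}, since $d\geq 2\varepsilon n\geq n^{3/4}$ puts you in the third branch and yields the stronger $\lambda_2=\Omega(n^{1/3})$; and your spectral reading of the conjecture as a stability version of ``regular with $\lambda_2=0$ iff Tur\'an,'' via $\lambda_{\min}(\overline{G})=-1-\lambda_2(G)$ and the characterisation of graphs with smallest eigenvalue at least $-1$ as disjoint unions of cliques, is the right framing. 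You are also right that complementation in the very dense window only controls $|\lambda_n(G)|=1+\lambda_2(\overline{G})$ --- which is exactly what powers Theorem~\ref{thm:maxcut} --- and gives no information about $\lambda_2(G)$ itself.

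The genuine gap is the entire window $d\in\bigl[(\tfrac12-\varepsilon')n,(1-2\varepsilon)n\bigr]$, which is precisely where the conjecture lives: there you substitute for a proof a new conjecture, at least as strong as the original --- that the edit distance from a $d$-regular graph to the nearest Tur\'an graph is $O(\lambda_2^{2}n^{3/2})$ --- together with a program for attacking it, no step of which is carried out. Your diagnosis of why the paper's machinery cannot be pushed past $d=n/2$ is accurate and worth recording: the coefficient $\frac{n-2d+t}{nt}$ in the proof of Theorem~\ref{thm:lambda_2_and_delta} changes sign at $d=n/2$, and more fundamentally $\Lambda$ alone cannot certify $\lambda_2>0$ in the dense regime, since the Tur\'an graph itself has $\lambda_2=0$ while $\Lambda=\Theta(n^3/r^2)$, so any successful argument must use the farness hypothesis in an essentially structural way. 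But identifying the obstruction is not overcoming it; extracting an approximate clique partition of $\overline{G}$ from $\lambda_{\min}(\overline{G})\geq -1-\mu$ with edit-distance loss $O(\mu^{2}n^{3/2})$ (the rate your de Caen calibration shows is sharp) is exactly the missing idea. In short: correct reductions, correct identification of the hard case and of why the paper's techniques fail there, but no proof --- the statement remains open, exactly as the paper asserts.
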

	
	\section*{Aknowledgements}
	
	We would like to thank Jacques Verstraete and Victor Falgas-Ravry for bringing this problem to our attention, and for sharing their ideas. We also thank Lior Gishboliner for many fruitful discussions, and Igor Balla and Ferdinand Ihringer for their valuable remarks.

\end{document}